\theoremstyle{plain}
\newtheorem{theorem}{Theorem}
\newtheorem{lemma}{Lemma}
\newtheorem{proposition}{Proposition}
\theoremstyle{definition}
\newtheorem{definition}{Definition}
\newtheorem{remark}{Remark}
\newtheorem{example}{Example}
\makeatletter\patchcmd{\ttlh@hang}{\parindent\z@}{\parindent\z@\leavevmode}{}{}\patchcmd{\ttlh@hang}{\noindent}{}{}{}\makeatother 
\titlespacing*{\section}{0pt}{1mm}{1mm}
\titlespacing*{\subsection}{0pt}{1mm}{1mm}
\titlespacing*{\paragraph}{0pt}{1mm}{1mm}
\newenvironment{Mlist}{\begin{itemize}[topsep=0pt,itemsep=0pt,leftmargin=5mm]}{\end{itemize}}
\newenvironment{Mclaims}{\begin{itemize}[topsep=0pt,itemsep=0pt,leftmargin=7mm]}{\end{itemize}}
\newcommand{\EQN}[1]{(\ref{eqn:#1})}
\newcommand{\LEM}[1]{Lemma~\ref{lem:#1}}
\newcommand{\THM}[1]{Theorem~\ref{thm:#1}}
\newcommand{\SEC}[1]{Section~\ref{sec:#1}}
\newcommand{\PRP}[1]{Proposition~\ref{prp:#1}}
\newcommand{\DEF}[1]{Definition~\ref{def:#1}}
\newcommand{\FIG}[1]{Figure~\ref{fig:#1}}
\newcommand{\RMK}[1]{Remark~\ref{rmk:#1}}
\newcommand{\ALG}[1]{Algorithm~\ref{alg:#1}}
\newcommand{\EXM}[1]{Example~\ref{exm:#1}}
\newcommand{\END}{\hfill \ensuremath{\vartriangleleft}}
\newcommand{\df}[1]{{\it #1}}
\newcommand{\Wlog}{without loss of generality }
\newcommand{\resp}{respectively}
\newcommand{\st}{such that }
\renewcommand{\c}{\colon}
\newcommand{\dto}{\dasharrow}
\newcommand{\bas}[1]{\langle #1\rangle}
\newcommand{\set}[2]{\{#1~|~#2\}}
\newcommand{\aut}{\operatorname{Aut}}
\newcommand{\rnk}{\operatorname{rank}}
\newcommand{\im}{\operatorname{\textbf{1}}}
\newcommand{\zm}{\operatorname{\textbf{0}}}
\newcommand{\bir}{\operatorname{bir}}
\newcommand{\dom}{\operatorname{dom}}
\newcommand{\img}{\operatorname{img}}
\newcommand{\edim}{\operatorname{dim}}
\newcommand{\bmd}{\operatorname{bmd}}
\newcommand{\cdeg}{\operatorname{cdeg}}
\renewcommand{\P}{\mathbb{P}}
\newcommand{\Z}{\mathbb{Z}}
\newcommand{\F}{\mathbb{F}}
\newcommand{\R}{\mathbb{R}}
\renewcommand{\S}{\mathbb{S}}
\renewcommand{\H}{\mathbb{H}}
\newcommand{\E}{\mathbb{E}}
\newcommand{\Q}{\mathbb{Q}}
\newcommand{\bc}{{\operatorname{\textbf{c}}}}
\newcommand{\br}{{\operatorname{\textbf{r}}}}
\newcommand{\bp}{{\operatorname{\textbf{p}}}}
\newcommand{\bq}{{\operatorname{\textbf{q}}}}
\newcommand{\bs}{{\operatorname{\textbf{s}}}}
\newcommand{\bd}{{\operatorname{\textbf{d}}}}
\newcommand{\cP}{\mathcal{P}}
\newcommand{\cR}{\mathcal{R}}
\newcommand{\cS}{\mathcal{S}}
\newcommand{\cI}{\mathcal{I}}
\newcommand{\cJ}{\mathcal{J}}
\newcommand{\cM}{\mathcal{M}}
\newcommand{\bF}{\operatorname{\bf F}}
\newcommand{\fI}{\mathfrak{i}}
\newcommand{\fJ}{\mathfrak{j}}
\newcommand{\fa}{\mathfrak{a}}
\newcommand{\fb}{\mathfrak{b}}
\newcommand{\fc}{\mathfrak{c}}
\newcommand{\fh}{\mathfrak{h}}
\newcommand{\hf}{\hat{f}}
\newcommand{\hg}{\hat{g}}
\newcommand{\hX}{\hat{X}}
\newcommand{\hY}{\hat{Y}}
\newcommand{\e}{e}
\renewcommand{\l}{\ell}
\newcommand{\p}{\varepsilon}
\renewcommand{\k}{\kappa}
\newcommand{\kf}{\kappa_f}
\newcommand{\kg}{\kappa_g}
\newcommand{\khf}{\kappa_{\hf}}
\newcommand{\khg}{\kappa_{\hg}}
\newcommand{\PP}{\P^1\times\P^1}
\newcommand{\mc}[1]{\lfloor#1\rfloor}
\title{Projective isomorphisms between rational surfaces}
\author{Bert J\"uttler, Niels Lubbes, Josef Schicho}
\begin{document}

\setlength{\abovedisplayskip}{3pt}\setlength{\belowdisplayskip}{3pt} 

\maketitle

\begin{abstract}
We present a method for computing projective isomorphisms between
rational surfaces that are given in terms of their parametrizations.
The main idea is to reduce the computation of such projective isomorphisms
to five base cases by modifying the parametric maps such that
the components of the resulting maps have lower degree.
Our method can be used to compute
affine, Euclidean and M\"obius isomorphisms
between surfaces.
\\
{\bf Keywords:}
projective isomorphisms,
surface automorphisms,
rational surfaces,
del Pezzo surfaces,
adjunction
\\[2mm]
{\bf MSC1010:} 14J50, 14J26



\end{abstract}

\begingroup
\def\addvspace#1{\vspace{-2mm}}
\tableofcontents
\endgroup

\section{Introduction}
\label{sec:intro}

Suppose we are given two rational surfaces in terms of their parametrizations.
We reduce the computation of the projective isomorphisms
between these surfaces,
to the more tractable problem of finding projective isomorphisms
between surfaces that are covered by lines or conics
and that belong to one of five different types.
We shall discuss two of the types in more detail and
leave the remaining three types as future work.
Our reduction method translates into \ALG{alg}
and its correctness follows from \THM{C} and \THM{B}.
We refer to \citep[\tt github]{github} for a (partial) implementation.
We explain in \SEC{A} how to recover
from projective isomorphisms between surfaces,
the affine, Euclidean and M\"obius isomorphisms.


If $f$ is a rational map, then we denote its \df{domain} by $\dom f$
and the \df{Zariski closure of its image} by $\img f$.
Let $\cM$ be defined as the set of all rational maps
$f\c\dom f\dto\img f\subseteq\P^{\edim f}$
defined over some algebraically closed field $\F$
\st $\dom f\in\{\P^2,\PP\}$.
Here $\edim f\in\Z_{\geq 0}$ denotes the \df{embedding dimension}
and we assume that $\img f$ is not contained in a
hyperplane section of~$\P^{\edim f}$.
The group of \df{biregular automorphisms} of a variety $Z$ is denoted by $\aut(Z)$
and it is known that $\aut(\P^{\edim f})$ is linear \citep[Example~7.1.1]{har}.
If $f,g\in\cM$, then
the \df{set of projective isomorphisms} is defined as
\[
\cP(f,g):=\set{p\in\aut(\P^{\edim f})}{p(\img f)=\img g}.
\]
In this article we address the following problem:
\begin{center}
\it For given birational maps $f,g\in\cM$ determine $\cP(f,g)$.
\end{center}
A projective isomorphism $p\c \img f\to \img g$ induces a birational map
\[
g^{-1}\circ p\circ f\c \dom f\dto \dom g.
\]
Our strategy is to compute the set of birational
maps between the domains that are compatible with projective isomorphisms
in this way.
In \cite{hau} it was assumed that $\dom f=\dom g=\P^2$ with both $f$ and $g$ base point free
so that $g^{-1}\circ p\circ f$ is a linear projective automorphism of $\P^2$.
In this article, we admit arbitrary base points and
as a consequence we get compatible birational maps
that blow up base points and contract curves.
Fortunately, these blowups and contractions can be controlled
using adjunction theory, and allow us to get a finite-dimensional set
of candidates for the induced birational maps between the domains.
Once we have this set of candidates, we write down the
conditions expressing the statement that we really have a projective
isomorphism. This will be a system of algebraic equations and its
solutions correspond to the projective isomorphisms in $\cP(f,g)$.

Isomorphisms between surfaces are of interest in
geometric modeling.
If $\img f$ and $\img g$ are surfaces in $\P^3$ that are covered by lines,
then \citep[Section~3.1]{alc2} and \citep[Algorithm~5]{vrsek} provide methods for
computing affine and Euclidean isomorphisms between~$\img f$ and~$\img g$.
We refer to \citep[Introduction]{alc1} and \citep[Introduction]{hau} for further references.

If $\img f$ is smooth, then $\aut(\img f)$
and its action on the N\'eron-Severi lattice~$N(\img f)$
is of interest to algebraic geometers \cite{dol,zhang,koita}.
The relation of $\aut(\img f)$ to our paper is as follows:
projective isomorphisms in $\cP(f,f)$ correspond to automorphisms of $\img f$ whose induced action on $N(\img f)$ leaves
the class of hyperplane sections invariant.
We will clarify these notions in \SEC{pre} in order to make
our results also accessible to the geometric modeling community.
We refer to \citep[Introduction]{dol} for further references
from the viewpoint of algebraic geometry.

The \df{set of compatible reparametrizations} is defined as
\[
\cR(f,g):=\set{r \in \bir(\dom f,\dom g)}{ p\circ f= g\circ r \text{ for some } p\in\cP(f,g) },
\]
where $\bir(\dom f,\dom g)$ is the set of birational maps between the domains.
The idea of \ALG{alg} is to first compute a set $\cS$ that contains the compatible reparametrizations $\cR(f,g)$.
We explain in \SEC{P} how to recover the projective isomorphisms $\cP(f,g)$ from this super set $\cS$.
In \SEC{cr} we state \THM{C} and \THM{B} which reduce the computation of
$\cS$ to five base cases B1---B5.
The base cases B1 and B2 are considered in \SEC{B},
and the remaining three base cases are left as future work.
In \SEC{A} we discuss some applications of our algorithm.
See \EXM{B2} for a full run of the algorithm in a concrete instance.
Finally, we present the proof for \THM{C} and \THM{B} in \SEC{proofs}.

\section{Basic concepts and notation}
\label{sec:pre}

In order to make this article accessible to a wide audience we recall
some basic concepts from algebraic geometry and provide references.
We will also introduce non-standard notation that will be used in the remaining sections of this article.

We define a \df{sequential blowup}
as a birational morphism $\pi\c Z_{r+1}\to Z_1$
between smooth surfaces
together with blowups
$\pi_i\c Z_{i+1}\to Z_{i}$ of points~$p_i\in Z_i$ for $1\leq i\leq r$
\st $\pi=\pi_1\circ\ldots\circ\pi_r$.
We refer to $p_i$ as the \df{center of the blowup} and $E_{i+1}:=\pi_i^{-1}(p_i)$
as a \df{$(-1)$-curve} (a curve isomorphic to~$\P^1$ and with self-intersection~$-1$).
See \citep[Example~I.4.9.1 and Section V.3]{har} for more information.
If $p_i\in (\pi_{i-1}\circ\cdots\circ\pi_j)^{-1}(p_j)$
for some $r\leq i<j\leq 1$,
then we say that $p_i$ is \df{infinitely near} to~$p_j$.
We call a point
\df{infinitely near} if it is infinitely near to some point and
\df{simple} otherwise.

Suppose $C_1\subset Z_1$ is a curve and that
$C_2\subset Z_2$ is the Zariski closure
of the preimage~$\pi_1^{-1}(C_1\setminus\{p_1\})$.
We refer to \citep[Remark~V.3.5.2]{har} for the definition of multiplicity
of $C_1$ at the simple point~$p_1$.
The \df{multiplicity} of~$C_1$ at an infinitely near point~$p_2$
is defined as the usual multiplicity of~$C_2$ at~$p_2$.

Let $V$ be a vector space of forms on~$Z_1$.
The \df{linear series} of $V$ is defined as
$|V|:=\set{ \operatorname{ZeroSet}(v)}{v\in V}$.
The \df{moving part} of~$V$
is defined as the vector space that is generated
by the polynomial quotients $g_1/q,\ldots,g_n/q$,
where $\bas{g_1,\ldots,g_n}_\F$ is a basis for $V$
and $q:=\gcd(g_1,\ldots,g_n)$ is the greatest common polynomial divisor.

Suppose that $f\c Z_1\dto \P^n$ is the rational map
whose components generate~$V$.
The \df{associated vector space}~$V_f$ of $f$ is defined as $V$.
We say that $q$ is a \df{base point} of \df{multiplicity} $m$
of both $V$ and $f$, if there exists a sequential blowup $\pi$
\st $q=p_i$ for some $1\leq i\leq r$
and if a general curve in $|V|$ has multiplicity $m>0$ at~$p_i$.

\begin{remark}[\ALG{get} and \ALG{set}]
We refer to \citep[Algorithms~1 and 2]{n-bp}
for the method and implementation of \ALG{get} and \ALG{set}
(see alternatively \cite{rito} for a possibly faster implementation).
We remark that in \cite{n-bp} a sequential blowup $\pi$
and its centers are represented in
terms of a data structure that extracts only the
part of $\pi$ that is needed for this article.
\END
\end{remark}

\begin{algorithm}[!ht]
\caption{}
\label{alg:get}
\begin{itemize}[itemsep=0pt,topsep=5pt,leftmargin=5mm]
\item {\bf input.}
A vector space $V$ of forms on $Z_1\in\{\P^2,\P^1\times\P^1\}$.
\item {\bf output.}
A sequential blowup $\pi\c Z_{r+1}\to Z_1$ and $m_i\in\Z_{>0}$
\st $q$ is a base point of~$V$ of multiplicity $m_i$
if and only if
there exists a unique $1\leq i\leq r$
\st $q=p_i\in Z_i$.
\end{itemize}
\end{algorithm}

\begin{algorithm}[!ht]
\caption{}
\label{alg:set}
\begin{itemize}[itemsep=0pt,topsep=5pt,leftmargin=5mm]
\item {\bf input.}
A sequential blowup $\pi\c Z_{r+1}\to Z_1$ with centers $p_i\in Z_i$
\st $Z_1\in\{\P^2,\P^1\times\P^1\}$.
The vector space~$W$ of all (bi-) degree~$d$ of forms on~$Z_1$.
A set of multiplicities $m_i\in\Z_{>0}$ for $1\leq i\leq r$.

\item {\bf output.}
The subspace~$V\subset W$ of forms whose zero set are curves
that have multiplicity $\geq m_i$ at the base point $p_i$ for all $1\leq i\leq r$.
\end{itemize}
\end{algorithm}

\begin{example}
\label{exm:inf}
Suppose that $f\c \P^2\dto \P^1$ maps
$(x_0:x_1:x_2)$ to $(x_1^2+x_2^2:x_2^2+x_1x_0)$.
We find that $p_1:=(1:1:\fI)$, $p_2:=(1:1:-\fI)$
and $p_3:=(1:0:0)$ are simple base points for $f$ with multiplicities $(m_1,m_2,m_3)=(1,1,1)$.
The map~$f$ has also a base point $p_4$ of multiplicity $m_4=1$ that is
infinitely near to~$p_3$.
\END
\end{example}

\begin{definition}
\label{def:map}
Suppose that $V$ is a vector space of forms on a smooth projective surface~$Z$.
Let $\P(V^*)$ denote the projectivization of the dual space of~$V$
so that each point in~$\P(V^*)$ corresponds to a codimension one subspace of~$V$.
Let $\breve{\varphi}_V\c Z\dto \P(V^*)$ be defined
as $\breve{\varphi}_V(p):=\set{v\in V}{v(p)=0}$ for all $p\in Z$.
A \df{choice of basis} is defined as an isomorphism
$\beta\c \P(V^*)\to\P^{\dim V-1}$.
The \df{associated map} $\varphi_V\c Z\dto \P^{\dim V-1}$
is defined as $\beta\circ\breve{\varphi}_V$ where $\beta$
is a choice of basis.
We need to be careful that the definitions and assertions in this article
that involve the notion of the associated map
are independent of the choice of such a basis.
Recall that we denote by $\img\varphi_V$
the Zariski closure of the image of~$\varphi_V$.
\END
\end{definition}

\begin{definition}
\label{def:M}
In this article we will assume that the components
of maps in~$\cM$ as defined in \SEC{intro}
have a constant greatest common divisor.
The \df{component degree}~$\cdeg(f)$ of $f\in\cM$
is defined as the (bi-) degree of the components of~$f$.
Let the sequential blowup~$\pi\c S\to\dom f$ be the output of \ALG{get}
when it is applied to the associated vector space~$V_f$.
In this case, we call $\bmd f:=S$ the \df{base model} for~$f$.
\END
\end{definition}

\begin{definition}
Suppose that $f\in\cM$ has base points
$p_1,\ldots,p_r$.
In this article the \df{N\'eron-Severi lattice} $N(\bmd f)$ is
an additive group together with
an intersection product $\cdot\c N(\bmd f)\otimes N(\bmd f)\to\Z$
that satisfies the following axioms:
\begin{Mlist}

\item If $\dom f=\P^2$, then
$N(\bmd f)\cong\bas{\e_0,\e_1,\ldots,\e_r}_\Z$,
where the only non-zero intersections between the
generators are
$\e_0^2=1$ and $\e_i^2=-1$ for $1\leq i\leq r$.

\item If $\dom f=\PP$, then
$N(\bmd f)\cong\bas{\l_0,\l_1, \p_1,\ldots,\p_r}_\Z$,
where the only non-zero intersections between the
generators are
$\l_0\cdot\l_1=1$ and $\p_i^2=-1$ for $1\leq i\leq r$.
\end{Mlist}
See forward \RMK{N} for more information.
\END
\end{definition}

\begin{definition}
\label{def:nota}
Suppose that $f\in\cM$ has base points $p_1,\ldots,p_r$
with multiplicities $m_1,\ldots,m_r$, \resp.
First suppose that $\dom f=\P^2$ and that $d:=\cdeg f$.
The \df{class} of~$f$ is defined as
\[
[f]=d\,\e_0-m_1\,\e_1-\ldots-m_r\,\e_r.
\]
The \df{greatest common divisor} of $f$ is defined as
\[
\gcd [f]:=\gcd(d,m_1,\ldots,m_r).
\]
The \df{canonical class} associated to $f$ is defined as
\[
\kf:=-3\,\e_0+\e_1+\ldots+\e_r.
\]
Conversely, suppose that $c:=d\,\e_0-m_1\,\e_1-\ldots-m_r\,\e_r$
is a class in $N(\bmd f)$ \st $d,m_1,\ldots, m_r>0$.
The \df{associated vector space} $V_c$ is defined as
the output of \ALG{set} with input $\pi\c\bmd f\to\dom f$ and $d,m_1,\ldots,m_r$.
We denote
\[
h^0(c):=\dim V_c.
\]
If $M$ is the moving part of $V_c$, then
the \df{parametric map} of~$c$ is
defined as
\[
\Psi_c:=\varphi_M\c\dom f\to \P^{h^0(c)-1}.
\]
The \df{moving part} of the class~$c$
is defined as the following class:
\[
\mc{c}:=[\Psi_c].
\]
If $\dom f=\PP$ and $\cdeg(f)=(d_1,d_2)$, then
the terminology is analogous except:
\begin{gather*}
[f]=d_1\,\l_0+d_2\,\l_1-m_1\,\p_1-\ldots-m_r\,\p_r,
\\
\gcd [f]:=\gcd(d_1,d_2,m_1,\ldots,m_r)
\quad\text{ and }\quad
\kf:=-2\,\l_0-2\,\l_1+\p_1+\ldots+\p_r.
\end{gather*}
The reader is warned that the non-standard notation introduced
in this definition will be used throughout this article.
\END
\end{definition}

\begin{example}
\label{exm:cdeg3}
The birational map $f\c\P^2\dto\P^3$ defined by
\[
x\mapsto (x_1^3 - x_1^2x_0: x_1^2x_2: x_1x_2^2: x_1x_2x_0 + x_2^3 - x_2^2x_0),
\]
has simple base points $p_1:=(1:0:0)$, $p_2:=(1:1:0)$ and $p_3:=(1:0:1)$ with multiplicities
$m_1:=2$, $m_2:=1$ and $m_3:=1$, \resp.
The class of $f$ is
$[f]=3\,\e_0-2\,\e_1-\e_2-\e_3$,
and for a particular choice of a basis,
the parametric map $\Psi_{[f]}\c\P^2\dto \P^4$ is defined as
$
x\mapsto (x_1^3 - x_1^2x_0: x_1^2x_2: x_1x_2^2: x_1x_2x_0: x_2^3 - x_2^2x_0).
$
Notice that $\edim f=3<h^0([f])-1=4$ and that $\bmd f$ is $\P^2$ blown up in $p_1$, $p_2$ and $p_3$.
Since $f$ is birational,
$\deg(\img f)$ is equal to the number of intersections
outside the base points of the pullback along~$f$ of two hyperplane sections of $\img f$
to $\P^2$,
and therefore we can deduce that $\deg(\img f)=[f]^2=3$.
In fact, for all $f\in \cM$,
either $\dim(\img f)<2$ or $\deg(\img f)=\deg(f)\cdot [f]^2$,
where $\deg(f)$ equals the number of points in a general fiber.
\END
\end{example}


\begin{example}
\label{exm:cdeg22}
Suppose that $f\c\PP\dto\P^4$ is defined by
\begin{gather*}
(y_0:y_1;y_2:y_3)
\mapsto
(y_0^2 y_2^2 -3 y_1^2 y_3^2:
y_0^2 y_2 y_3 + 3 y_1^2 y_2 y_3:
y_0^2 y_3^2 + 3 y_1^2 y_3^2:\\
y_0 y_1 y_2^2 + y_0 y_1 y_3^2:
y_1^2 y_2^2 + y_1^2 y_3^2).
\end{gather*}
The components of this map form the vector space $V_{[f]}$ of forms of bi-degree (2,2)
that have four simple base points of multiplicity one ($\fI^2=-1$ and $\fJ^2=-\frac{1}{3}$):
\begin{gather*}
\begin{array}{ll}
p_1:=(1:-\fJ;1:\fI), & p_3:=(1:-\fJ;1:-\fI),\\
p_2:=(1:\fJ;1:-\fI), & p_4:=(1:\fJ;1:\fI).
\end{array}
\end{gather*}
Thus $[f]=2\,\l_0+2\,\l_1-\p_1-\p_2-\p_3-\p_4$
so that $\deg(\img f)=\deg(f)\cdot[f]^2=4$ with $\deg(f)=1$ and $h^0([f])=5$.
Let $\tau_1$ and $\tau_2$ denote the projection $\PP\to\P^1$
to the first and second component, \resp.
Notice that the base points do not lie in general position,
since
$\tau_1(p_1)=\tau_1(p_3)$,
$\tau_1(p_2)=\tau_1(p_4)$,
$\tau_2(p_1)=\tau_2(p_4)$ and
$\tau_2(p_2)=\tau_2(p_3)$.
Therefore, it follows that
\begin{gather*}
\begin{array}{cc}
h^0(\l_0-\p_1-\p_3)=1, & h^0(\l_1-\p_1-\p_4)=1,\\
h^0(\l_0-\p_2-\p_4)=1, & h^0(\l_1-\p_2-\p_3)=1.
\end{array}
\end{gather*}
The fibers of $\tau_1$ and $\tau_2$ that contain two base points are
contracted via $f$ to four complex conjugate isolated singularities in $\img f$.
We remark that $\img f$ can be linearly projected to a quartic surface in $\P^3$ whose
real points form a torus of revolution in $\R^3$.
\END
\end{example}

\begin{remark}
\label{rmk:N}
We shall consider
maps $f\in\cM$ whose base points are contained in some fix set of base points
$\{p_1,\ldots,p_r\}$ and the classes of such maps keep track
of the multiplicities at these base points.
We may think of the class of a map
as a ``generalized component degree'' and the intersection product
between these classes allows us to access power tools from algebraic geometry.
The reason is that the N\'eron Severi lattice~$N(\bmd f)$
is actually the set of divisor classes
on~$\bmd f$ modulo numerical equivalence.
In this article we consider only rational surfaces and
thus numerical equivalence and rational equivalence define the same
equivalence relation on divisor classes.
The element~$\kf$
is the ``canonical class'' of~$\bmd f$ and is
defined as the divisor class that corresponds to the line bundle
that is the determinant of the cotangent bundle
of~$\bmd f$.
We refer to \citep[Section~1.1]{laz1} for more information.
\END
\end{remark}

\section{Isomorphisms from reparametrizations}
\label{sec:P}

Suppose that $f,g\in\cM$ are birational
and suppose that~$\cS\supseteq \cR(f,g)$
is a family $(r_c)_{c\in \cI}$ of reparametrizations indexed by $\cI\subseteq \F^t$
for some $t>0$.
In order to recover the projective isomorphisms $\cP(f,g)$ from $\cS$ we
show how to recover the index-set $\cJ\subseteq \cI$ so that $\cR(f,g)=(r_c)_{c\in \cJ}$.

\begin{definition}
Let $\vec{v}$ be a column vector with $m$ rows that consists of a basis for all forms of the same (bi-) degree.
If $f$ has $n+1$ components, then the \df{coefficient matrix} of~$f$ is
defined as the $(n+1)\times m$ matrix $M_f$ \st $M_f \cdot \vec{v}$ defines $f$ as a column vector.
Let $\ker M_f$ be a matrix whose columns form a basis for the kernel of $M_f$.
We denote the identity matrix and zero matrix by $\im$ and $\zm$, \resp.
\END
\end{definition}

\begin{example}
If $f\in\cM$ \st $\dom f=\PP$, $\img f\subset\P^3$ and $\cdeg(f)=(2,2)$,
then we may choose
\[
\vec{v}=(
s^2u^2,\,
s^2uv, \,
s^2v^2,\,
stu^2, \,
stuv,  \,
stv^2, \,
t^2u^2,\,
t^2uv, \,
t^2v^2)^\top,
\]
and we find that $M_f$ is a $4\times 9$ matrix.
\END
\end{example}

The index set $\cJ$ \st $\cR(f,g)=(r_c)_{c\in \cJ}$ is recovered as follows:
\begin{multline}
\label{eqn:J}
\cJ:=\big\{ c\in \cI ~|~ g\circ r_c \text{ has the same base points as } f,
\\
~\cdeg(g\circ r_c)=\cdeg(f)
\quad \text{and}\quad M_{g\circ r_c}\cdot \ker M_f=\zm
\big\}.
\end{multline}
The composition $g\circ r_c$ is computed using first substitution
after which we factor out the greatest common divisor of the resulting components.
For computing the base points of $f$ and
enforcing these base points on $g\circ r_c$
we use \ALG{get} and \ALG{set}.

In order to recover $\cP(f,g)$ from $\cR(f,g)=(r_c)_{c\in\cJ}$,
we consider the following two $m\times m$ matrices that are
constructed using matrix augmentation~$(\cdot|\cdot)$:
\[
E_f:=\bigl(M_f^\top~|~ \ker M_f \bigr)^\top
\qquad\text{and}\qquad
E_{g\circ r_c}:=\bigl(M_{g\circ r_c}^\top ~|~ \ker M_f \bigr)^\top.
\]
If $U\in \F^{n+1\times m+1}$ is a matrix, then we denote
by $\chi_{_U}\c\P^m\to\P^n$ the corresponding projective linear map.

\begin{proposition}
\label{prp:S}
$\cP(f,g)=\set{ \chi_{_U}\in \aut(\P^n)  }{ U\oplus\im=E_{g\circ r_c} \cdot E_f^{-1}  \text{ and }  c\in \cJ}$.
\end{proposition}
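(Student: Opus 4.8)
The plan is to prove the two inclusions of the claimed set equality, translating between the geometric statement ``$p(\img f)=\img g$'' and the purely linear-algebraic condition ``$U\oplus\im=E_{g\circ r_c}\cdot E_f^{-1}$ for some $c\in\cJ$''. The key observation is that, because $f$ and $g$ are birational onto their images and the components of each are forms of a fixed $($bi-$)$degree with constant gcd, a projective isomorphism $p=\chi_{_U}\in\cP(f,g)$ pulls back to a reparametrization $r\in\cR(f,g)$ with $p\circ f=g\circ r$; conversely every such $r$ arises this way. By definition of $\cR(f,g)$ and of the index set $\cJ$ in \EQN{J}, the reparametrizations in $\cR(f,g)$ are exactly the $(r_c)_{c\in\cJ}$, since $\cJ$ was constructed precisely to select those indices $c$ for which $g\circ r_c$ has the same base points and component degree as $f$ and for which $M_{g\circ r_c}\cdot\ker M_f=\zm$ — the last condition being equivalent to the rows of $M_{g\circ r_c}$ lying in the row span of $M_f$, i.e.\ to the existence of $U$ with $M_{g\circ r_c}=U\cdot M_f$ as linear combinations of the basis vector $\vec v$.

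Concretely, I would argue as follows. Fix $c\in\cJ$. Since $M_{g\circ r_c}\cdot\ker M_f=\zm$, every row of $M_{g\circ r_c}$ is orthogonal to $\ker M_f$, hence lies in the row space of $M_f$; thus there is a unique $(n+1)\times(n+1)$ matrix $U$ with $M_{g\circ r_c}=U\cdot M_f$. Stacking this with the trivial identity $\ker M_f^\top = \im\cdot\ker M_f^\top$ (reading the bottom blocks of $E_f$ and $E_{g\circ r_c}$, which are \emph{identical} by construction) gives exactly $E_{g\circ r_c}=(U\oplus\im)\cdot E_f$. Here one needs that $E_f$ is invertible: its rows consist of a basis of the row space of $M_f$ together with a basis of its orthogonal complement $\ker M_f$, so the $m$ rows are linearly independent and $E_f\in GL_m(\F)$. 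Therefore $U\oplus\im=E_{g\circ r_c}\cdot E_f^{-1}$. Now $M_{g\circ r_c}=U\cdot M_f$ means precisely that $(g\circ r_c)=U\cdot(M_f\vec v)$ up to the fixed basis $\vec v$, i.e.\ that the $i$-th component of $g\circ r_c$ equals the $i$-th component of $\chi_{_U}\circ f$; combined with $g\circ r_c=p'\circ f$ for the projective isomorphism $p'\in\cP(f,g)$ witnessing $r_c\in\cR(f,g)$, and using that $f$ is dominant so that a linear map of $\P^n$ is determined by its values on $\img f$, we get $\chi_{_U}=p'\in\cP(f,g)$. In particular $\chi_{_U}$ is an honest automorphism of $\P^n$, so $U$ is invertible and the set on the right-hand side does consist of elements of $\aut(\P^n)$.

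For the reverse inclusion, take $p=\chi_{_U}\in\cP(f,g)$. Since $f,g$ are birational onto their images, $r:=g^{-1}\circ p\circ f\in\bir(\dom f,\dom g)$ satisfies $p\circ f=g\circ r$, so $r\in\cR(f,g)$; because $\cS\supseteq\cR(f,g)$ is the family $(r_c)_{c\in\cI}$, there is $c\in\cI$ with $r=r_c$. From $g\circ r_c=p\circ f$ and the fact that composing with the linear map $p$ neither changes base points nor component degree nor introduces a common factor beyond the constant one, $g\circ r_c$ has the same base points and component degree as $f$; and $g\circ r_c=\chi_{_U}\circ f$ forces $M_{g\circ r_c}=U\cdot M_f$ (again up to the common basis $\vec v$ and rescaling, which is absorbed into $U$), whence $M_{g\circ r_c}\cdot\ker M_f=U\cdot M_f\cdot\ker M_f=\zm$. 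Thus $c\in\cJ$, and the computation of the previous paragraph run in reverse gives $U\oplus\im=E_{g\circ r_c}\cdot E_f^{-1}$.

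The main obstacle is the bookkeeping around the ambiguity of a rational map by an overall scalar and, relatedly, making rigorous the step ``$\chi_{_U}\circ f=g\circ r_c$ on $\img f$ implies $\chi_{_U}$ and the witnessing $p'$ agree as elements of $\aut(\P^n)$'': one must use that $\img f$ is nondegenerate in $\P^n$ (assumed for maps in $\cM$) so that a linear automorphism of $\P^n$ restricting to the identity on $\img f$ is the identity, which pins down $U$ up to the harmless scalar and shows the right-hand set is exactly $\cP(f,g)$ rather than merely contained in it. The remaining verifications — invertibility of $E_f$, equality of the bottom blocks of $E_f$ and $E_{g\circ r_c}$, and the equivalence of $M_{g\circ r_c}\cdot\ker M_f=\zm$ with $M_{g\circ r_c}=U\cdot M_f$ — are routine linear algebra.
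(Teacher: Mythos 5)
Your proposal follows essentially the same route as the paper: translate $p\circ f=g\circ r_c$ into the matrix identity $M_{g\circ r_c}=U\cdot M_f$, observe that this is equivalent to $M_{g\circ r_c}\cdot\ker M_f=\zm$, and package the statement in terms of the augmented matrices $E_f$, $E_{g\circ r_c}$. The paper phrases this via the Veronese-type map $v\colon\dom f\to Z\subset\P^m$ and the two linear projections $s,t$ with common center, then introduces $\cJ':=\set{c\in\cI}{r_c\in\cR(f,g)}$ and shows $\cJ'\subseteq\cJ$; your version unpacks the same linear algebra more explicitly. So this is the same approach, not a genuinely different one.

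There is, however, a circularity in your forward direction that you should repair. After fixing $c\in\cJ$ and deducing $M_{g\circ r_c}=U\cdot M_f$, hence $\chi_{_U}\circ f=g\circ r_c$, you then write ``combined with $g\circ r_c=p'\circ f$ for the projective isomorphism $p'\in\cP(f,g)$ witnessing $r_c\in\cR(f,g)$.'' But $c\in\cJ$ is a purely linear-algebraic condition from \EQN{J}; it does not yet give you $r_c\in\cR(f,g)$, and the existence of such a $p'$ is exactly what you are in the middle of proving. The argument should instead go directly: the identity $\chi_{_U}\circ f=g\circ r_c$ together with $r_c$ being a (dominant, birational) map $\dom f\dto\dom g$ gives $\chi_{_U}(\img f)=\overline{g\bigl(r_c(\dom f)\bigr)}=\img g$, and $U$ is invertible because $M_f$ and $M_{g\circ r_c}$ both have full row rank $n+1$ by the nondegeneracy assumption on maps in $\cM$; hence $\chi_{_U}\in\cP(f,g)$ without invoking any preexisting $p'$. (As a by-product this also yields $r_c\in\cR(f,g)$, i.e. $\cJ\subseteq\cJ'$ in the paper's notation, a point the paper itself treats rather tersely by only proving $\cJ'\subseteq\cJ$.) One smaller quibble: your justification that $E_f$ is invertible appeals to $\ker M_f$ being the \emph{orthogonal complement} of the row space of $M_f$; over the algebraically closed ground field $\F$ the standard bilinear form is not definite, so row space and kernel can intersect nontrivially, and ``orthocomplement'' is not by itself a proof that the $m$ rows of $E_f$ are independent. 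This needs a different justification (or an explicit hypothesis), but that concern is inherited from the paper's own formula $E_f^{-1}$ rather than introduced by you.
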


\begin{proof}
Let the components of $v\c\dom f\to Z\subset\P^m$ form a basis of all forms of degree~$\cdeg(f)$.
Notice that $\img f,\img g\subset\P^n$ and $m\geq n$.
Let $s\c Z\dto \img f$ and $t\c Z\dto \img g$
be the linear projections defined by the coefficient matrices $M_f$ and $M_{g\circ r_c}$, \resp.
We require that $M_{g\circ r_c}$ and $M_f$ have the same kernel
so that the projection centers of $s$ and $t$ coincide.
Indeed, the elements in $\cP(f,g)$ are via $s$ and $t$ compatible
with the projective isomorphisms of $Z$ that preserve this center of projection.
In other words, if $\cJ'=\set{c\in\cI}{r_c\in \cR(f,g)}$, then
\begin{gather*}
\cP(f,g)
=
\set{\chi_{_U}\in\aut(\P^n)}{U\cdot M_f=M_{g\circ r_c} \text{ for some }  c\in \cJ'}
\\
=
\set{\chi_{_U}\in\aut(\P^n)}{(U \oplus \im)\cdot  E_f=E_{g\circ r_c} \text{ for some }  c\in \cJ'}.
\end{gather*}
It remains to show that $\cJ'\subseteq \cJ$.
If $c\in\cJ'$, then there exists a projective isomorphism $p\in\cP(f,g)$ \st $p\circ f=g\circ r_c$
and thus $c\in\cJ$ by definition, which concludes the proof.
\end{proof}

We define the following index sets, where
a matrix is \df{normalized} if it has non-zero determinant and
if the first non-zero entry of the matrix has value one:
\begin{gather*}
\cI_{\P^2}:=
\left\{
c\in\F^9
~|~
\left(\begin{smallmatrix}
c_0 & c_1 & c_2 \\
c_3 & c_4 & c_5 \\
c_6 & c_7 & c_8 \\
\end{smallmatrix}\right)
\text{ is normalized}
\right\},
\\
\cI_{\PP}:=
\left\{
c\in\F^8
~|~
\left(
\begin{smallmatrix}
c_0 & c_1 \\
c_2 & c_3 \\
\end{smallmatrix}
\right)
\text{ and }
\left(
\begin{smallmatrix}
c_4 & c_5 \\
c_6 & c_7 \\
\end{smallmatrix}
\right)
\text{ are both normalized}
\right\}.
\end{gather*}
We assume coordinates $x=(x_0:x_1:x_2)$ and $y=(y_0:y_1;y_2:y_3)$ for $\P^2$ and $\P^1\times\P^1$, \resp.
If $c\in \cI_{\P^2}$, then we implicitly assume that the corresponding reparametrization $r_c\colon\P^2\to\P^2$ is defined as
\[
r_c\c x\mapsto
(
c_0\,x_0 + c_1\,x_1 +  c_2\,x_2:
c_3\,x_0 + c_4\,x_1 +  c_5\,x_2:
c_6\,x_0 + c_7\,x_1 +  c_8\,x_2
).
\]
Similarly, if $c\in \cI_{\PP}$, then $r_c\colon\PP\to\PP$ is defined as
\[
r_c\c y\mapsto
(
c_0\,y_0 + c_1\,y_1:
c_2\,y_0 + c_3\,y_1
;
c_4\,y_2 + c_5\,y_3:
c_6\,y_2 + c_7\,y_3
).
\]
We denote by $\aut_\circ(\PP)$ the identity component of $\aut(\PP)$.

\begin{lemma}
\label{lem:aut}
If the set of reparametrizations $\cS$ is defined by either $\aut(\P^2)$ or $\aut_\circ(\PP)$, then $\cS=(r_c)_{c\in\cI_{\P^2}}$ and $\cS=(r_c)_{c\in\cI_{\PP}}$, \resp.
\end{lemma}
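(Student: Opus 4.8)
The plan is, in each case, to show that the assignment $c\mapsto r_c$ restricts to a bijection between the displayed index set and the relevant automorphism group, so that the family $\cS$ of all reparametrizations lying in that group can be re-indexed by it. I would begin by recalling the structure of the two groups. By the linearity of $\aut(\P^{\edim f})$ recalled in \SEC{pre} (see \citep[Example~7.1.1]{har}), $\aut(\P^2)\cong \mathrm{PGL}_3(\F)$, so every element is induced by an invertible $3\times 3$ matrix over $\F$, and two such matrices induce the same element exactly when they differ by a nonzero scalar. For $\PP$ I would use the standard description: an automorphism either preserves each of the two rulings of $\PP$, in which case it is a pair of automorphisms of the factors $\P^1$, or it interchanges the two rulings; composing with the coordinate swap $(a;b)\mapsto(b;a)$ reduces the second case to the first, so $\aut(\PP)\cong \mathrm{PGL}_2(\F)^2\rtimes\Z/2\Z$. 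Since the swap acts nontrivially (by the transposition $\l_0\leftrightarrow\l_1$) on the discrete invariant $N(\PP)$, it does not lie in the identity component, whence $\aut_\circ(\PP)\cong \mathrm{PGL}_2(\F)^2$, with each element given by a pair of invertible $2\times 2$ matrices, each unique up to a nonzero scalar.

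The remaining ingredient is the elementary fact that every nonzero matrix over $\F$ has a unique \df{normalized} scalar multiple, in the sense of the definition preceding the lemma. Indeed, every nonzero scalar multiple of a matrix $M$ has its nonzero entries in precisely the same positions, so ``the first nonzero entry'' names a fixed slot; dividing $M$ by the value occupying that slot yields a matrix whose first nonzero entry equals $1$, and if $\lambda M$ and $M$ are both normalized then $\lambda$ fixes that value, forcing $\lambda=1$. Since ``normalized'' also requires nonzero determinant, the normalized matrices of a given size form a complete irredundant set of representatives for the invertible matrices modulo scalars.

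Putting these together, in the $\P^2$ case the matrix $\left(\begin{smallmatrix}c_0&c_1&c_2\\c_3&c_4&c_5\\c_6&c_7&c_8\end{smallmatrix}\right)$ attached to $c\in\cI_{\P^2}$ is invertible (being normalized), so $r_c\in\aut(\P^2)$; every element of $\aut(\P^2)=\mathrm{PGL}_3(\F)$ arises this way, by taking the normalized representative of any matrix inducing it; and distinct $c$ give distinct maps, by uniqueness of the normalized representative. As $c\mapsto r_c$ is by definition the linear map with the above matrix, the family of all reparametrizations in $\aut(\P^2)$ equals $(r_c)_{c\in\cI_{\P^2}}$. The $\PP$ case is identical, applied to the two $2\times 2$ blocks of $c\in\cI_{\PP}$ separately: each block is a normalized, hence invertible, matrix, $r_c$ is the corresponding pair of automorphisms of $\P^1$, and $c\mapsto r_c$ is a bijection onto $\mathrm{PGL}_2(\F)^2=\aut_\circ(\PP)$, giving $\cS=(r_c)_{c\in\cI_{\PP}}$.

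I do not anticipate a genuine obstacle; the only step requiring some (routine) justification is the identification $\aut_\circ(\PP)\cong\mathrm{PGL}_2(\F)^2$, i.e.\ the assertion that the ruling-swapping automorphisms are exactly those missing from the identity component --- which, as indicated, follows from their nontrivial action on the finite set of rulings (equivalently on $N(\PP)$), an invariant that stays constant along connected families of automorphisms.
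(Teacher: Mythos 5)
Your proof is correct and rests on the same essential fact the paper invokes — the linearity of automorphisms of projective space per \citep[Example~7.1.1]{har} — the paper's own proof being just that one-line citation. You simply fill in the details the paper leaves implicit (the identification $\aut_\circ(\PP)\cong\mathrm{PGL}_2(\F)^2$ via the action on the rulings, and the uniqueness of normalized representatives modulo scalars), all of which is routine and sound.
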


\begin{proof}
It follows from \citep[Example~7.1.1]{har} that the biregular automorphisms of projective space are linear.
\end{proof}

\begin{example}
\label{exm:B1}
Let
$f\c\P^2\to\P^3, x \mapsto (x_0^2 + x_1^2 + x_2^2: x_0\,x_1: x_0\,x_2: x_1\,x_2)$
be the parametrization of a \df{Roman surface}~$\img f$ and suppose that $g=f$.
We will see in \EXM{B1b} that $\cS:=(r_c)_{c\in\cI_{\P^2}}$ contains the compatible reparametrizations $\cR(f,g)$.
Since $f$ is base point free, we find that
\[\cJ=\set{c\in\cI_{\P^2}}{ M_{g\circ r_c}\cdot\ker M_f=\zm }.\]
We choose the monomial basis $(x_0^2, x_0x_1, x_0x_2, x_1^2, x_1x_2, x_2^2)$ so that
\begin{gather*}
M_f=
\left(
\begin{smallmatrix}
1 & 0 & 0 & 1 & 0 & 1 \\
0 & 1 & 0 & 0 & 0 & 0 \\
0 & 0 & 1 & 0 & 0 & 0 \\
0 & 0 & 0 & 0 & 1 & 0 \\
\end{smallmatrix}
\right)
,\quad
(\ker f)^\top=
\left(
\begin{smallmatrix}
1 & 0 & 0 & 0 & 0 & -1\\
0 & 0 & 0 & 1 & 0 & -1
\end{smallmatrix}
\right)
\quad\text{and}\quad
M_{g\circ r_c}=
\\
\left(
\begin{smallmatrix}
c_0^2 + c_3^2 + c_6^2 & 2 c_0 c_1 + 2 c_3 c_4 + 2 c_6 c_7 & 2 c_0 c_2 + 2 c_3 c_5 + 2 c_6 c_8 & c_1^2 + c_4^2 + c_7^2 & 2 c_1 c_2 + 2 c_4 c_5 + 2 c_7 c_8 & c_2^2 + c_5^2 + c_8^2\\
c_0 c_3 & c_1 c_3 + c_0 c_4 & c_2 c_3 + c_0 c_5 & c_1 c_4 & c_2 c_4 + c_1 c_5 & c_2 c_5\\
c_0 c_6 & c_1 c_6 + c_0 c_7 & c_2 c_6 + c_0 c_8 & c_1 c_7 & c_2 c_7 + c_1 c_8 & c_2 c_8\\
c_3 c_6 & c_4 c_6 + c_3 c_7 & c_5 c_6 + c_3 c_8 & c_4 c_7 & c_5 c_7 + c_4 c_8 & c_5 c_8
\end{smallmatrix}
\right).
\end{gather*}

We find that $|\cJ|=24$ and 8 elements of $\cJ$ are listed below:
{\footnotesize\begin{gather*}
\begin{array}{ccccccccc}
c_0 & c_1   & c_2  & c_3 & c_4  & c_5  & c_6 & c_7 & c_8 \\\hline
  0 &  \pm1 &    0 &   1 &    0 &    0 &  0  &   0 &  1  \\
  0 &  \pm1 &    0 &  -1 &    0 &    0 &  0  &   0 &  1  \\
  0 &  \pm1 &    0 &   0 &    0 &    1 &  1  &   0 &  0  \\
  0 &  \pm1 &    0 &   0 &    0 &   -1 &  1  &   0 &  0  \\
\end{array}
\end{gather*}}%
We obtain 8 more elements by interchanging columns
$c_0 \leftrightarrow c_1$,
$c_3 \leftrightarrow c_4$ and
$c_6 \leftrightarrow c_7$.
The remaining 8 elements of $\cJ$ are obtained by instead interchanging columns
$c_1 \leftrightarrow c_2$,
$c_4 \leftrightarrow c_5$ and
$c_7 \leftrightarrow c_8$.
If, for example, $c=(0,1,0,1,0,0,0,0,1)$ and $U$ is the matrix \st $U\oplus\im=E_{g\circ r_c} \cdot E_f^{-1}$,
then
\[
U=
\left(
\begin{smallmatrix}
1 & 0 & 0 & 0\\
0 & 1 & 0 & 0\\
0 & 0 & 0 & 1\\
0 & 0 & 1 & 0
\end{smallmatrix}
\right).
\]

Indeed, we verify that $\img\chi_{_U}\circ f\subseteq \img g$, where
\[
\img g=\set{z\in\P^3}{ z_1^2\,z_2^2 + z_1^2\,z_3^2 + z_2^2\,z_3^2 - z_0\,z_1\,z_2\,z_3=0   }.
\]
See \cite{github} for an implementation of this example.
\END
\end{example}

\section{Reduction to five base cases}
\label{sec:cr}

This section will be concluded with the statements of \THM{C} and \THM{B}.
These main results imply a method that reduces the computation of
a super set of compatible reparametrizations to five base cases.
We shall delay the proof of the theorems until \SEC{proofs}.

A \df{condition} is a map $\bc\c\cM\to\{0,1\}$, where $0$ and $1$ are identified
with False and True, \resp.
Let $\cM_\bc:=\set{f\in \cM}{\bc(f)=1}$ be the set of rational maps that satisfy
the condition~$\bc$.
A \df{reducer} consists of a condition~$\bc$ and a
map~$\br\c\cM_\bc\to\cM$.
We refer to $\br(f)$ as the \df{reduction} of~$f$.

We call $\bq\c\cM\to\Z^m$ for some $m\in\Z_{>0}$ a \df{projective invariant}
if $\bq(f)$ is a projective invariant of~$\img f\subset\P^{\edim f}$ for all~$f\in\cM$.
A reducer~$\br$ is \df{compatible}
if its condition~$\bc\c\cM\to\{0,1\}\subset\Z$ is a projective invariant
and if
\[
\cR(f,g)\subseteq \cR(\br(f),\br(g))
\]
for all~$f,g\in\cM_\bc$.

\begin{remark}
\label{rmk:reducer}
Recall from \PRP{S}
that for given birational maps~$f,g\in\cM$
we can recover the projective isomorphisms~$\cP(f,g)$
from a super set of the compatible reparametrizations~$\cR(f,g)$.
Instead of finding a super set of~$\cR(f,g)$ it
is sufficient to find a super set of~$\cR(\br(f),\br(g))$,
where $\br$ is a compatible reducer.
If $\bc(f)\neq \bc(g)$, then $\cP(f,g)=\emptyset$
as the condition~$\bc$ of a compatible reducer~$\br$ is a projective invariant.
If $\bc(f)=\bc(g)=0$,
then we cannot further reduce the problem so that we are in a base case.
In this section we will define three reducers
$\br_0$, $\br_1$, $\br_2$ and we show in \THM{C} that these
reducers are compatible.
The component degree of~$\br_1(f)$
is equal to the component degree of~$f$ minus three (see \PRP{cdeg}).
Hence, it is easier to determine the super set of~$\cR(\br_1(f),\br_1(g))$
instead of~$\cR(f,g)$.
In \THM{B} we classify the base cases, namely the
cases where $\bc_0(f)=\bc_1(f)=\bc_2(f)=0$.
If both $f$ and $g$ are in a base case, then
the computation of a super set
of the compatible reparametrizations is better tractible
as $\img f$ and $\img g$ are surfaces that are covered by lines or conics
and thus theoretically well understood.
\EXM{B2} and \ALG{alg} in the next section shows that our theory can be converted into
an algorithm for computing projective isomorphisms between rational surfaces
in~$\P^n$ for all~$n>1$.
\END
\end{remark}

\begin{proposition}
If $\br\c\cM_\bc\to\cM$ and $\bs\c\cM_\bd\to\cM$ are compatible reducers,
then the reducer $\bs\circ\br$
with condition $\bc(f)\cdot\bd(\br(f))$
for all $f\in\cM$ is compatible as well.
\end{proposition}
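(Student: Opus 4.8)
The plan is to verify directly the three things required of a compatible reducer for $\bs\circ\br$. Write $\bc'\c\cM\to\{0,1\}$ for its condition, so $\bc'(f)=\bc(f)\cdot\bd(\br(f))$, with the convention that $\bc'(f)=0$ whenever $\bc(f)=0$ (in which case $\br(f)$ is undefined). First I would record that $\bs\circ\br$ really is a reducer, i.e.\ that $\bs(\br(f))\in\cM$ is defined for every $f\in\cM_{\bc'}$: if $\bc'(f)=1$ then $\bc(f)=1$, hence $f\in\cM_\bc$ and $\br(f)\in\cM$ is defined; and $\bd(\br(f))=1$, hence $\br(f)\in\cM_\bd$ and $\bs(\br(f))\in\cM$ is defined. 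Thus $\cM_{\bc'}=\cM_\bc\cap\br^{-1}(\cM_\bd)$ and $\bs\circ\br$ restricts to a map $\cM_{\bc'}\to\cM$, as a reducer should.

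The inclusion of reparametrization sets is then immediate by chaining. Let $f,g\in\cM_{\bc'}$. By the previous paragraph $f,g\in\cM_\bc$ and $\br(f),\br(g)\in\cM_\bd$, so compatibility of $\br$ and then of $\bs$ give
\[
\cR(f,g)\subseteq\cR(\br(f),\br(g))\subseteq\cR\bigl(\bs(\br(f)),\bs(\br(g))\bigr).
\]
This step needs nothing beyond the two hypotheses, so it is not where the difficulty lies.

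The step I expect to be the main obstacle is showing that $\bc'$ is a projective invariant. The key point, which I would isolate as a lemma, is that a compatible reducer sends birational maps with projectively equivalent images to maps with projectively equivalent images. To see this, let $f,f'\in\cM_\bc$ be birational with $\cP(f,f')\neq\emptyset$, pick $p\in\cP(f,f')$, and set $r:=(f')^{-1}\circ p\circ f\in\bir(\dom f,\dom f')$, using that $f'$ is birational. Then $f'\circ r=p\circ f$ with $p\in\cP(f,f')$, so $r\in\cR(f,f')$, hence $r\in\cR(\br(f),\br(f'))$ by compatibility of $\br$; in particular $\cP(\br(f),\br(f'))\neq\emptyset$, i.e.\ $\img\br(f)$ and $\img\br(f')$ are projectively equivalent. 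Now to conclude: suppose $\cP(f,f')\neq\emptyset$. Since $\bc$ is a projective invariant, $\bc(f)=\bc(f')$; if this common value is $0$ then $\bc'(f)=0=\bc'(f')$, and if it is $1$ then $f,f'\in\cM_\bc$, the lemma applies, and the projective invariance of $\bd$ gives $\bd(\br(f))=\bd(\br(f'))$, whence $\bc'(f)=\bc(f)\cdot\bd(\br(f))=\bc(f')\cdot\bd(\br(f'))=\bc'(f')$. Thus $\bc'(f)$ is unchanged under projective equivalence of $\img f$, which is exactly the assertion that $\bc'$ is a projective invariant. The one delicate point is the use of birationality of $f'$ to build the reparametrization $r$; this is harmless since, as throughout this article, the maps of interest are birational, and in any case for the concrete reducers $\br_0,\br_1,\br_2$ of \THM{C} the lemma can also be checked directly.
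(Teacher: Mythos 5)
Your proposal is correct and is exactly the unfolding of the definitions that the paper's one-line proof (``straightforward consequence of the definitions'') has in mind: the chain $\cR(f,g)\subseteq\cR(\br(f),\br(g))\subseteq\cR(\bs(\br(f)),\bs(\br(g)))$ plus the observation that projective equivalence of images is propagated by a compatible reducer, so that $\bd(\br(f))$ is determined by the projective class of $\img f$. The birationality caveat you flag is real but is implicit throughout the paper's framework (all maps for which $\cP(f,g)$ is computed are assumed birational), so it does not constitute a gap in your argument.
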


\begin{proof}
Straightforward consequence of the definitions.
\end{proof}

Please recall \DEF{nota}.
We define $\bp\c\cM\to\Z^3$
as
\[
\bp(f):=(h^0([f]),~[f]^2,~\gcd [f]).
\]
The reducers $\br_0$, $\br_1$ and $\br_2$
with conditions $\bc_0$, $\bc_1$ and $\bc_2$, \resp,
are defined as follows where $f\in\cM$:
\begin{Mlist}
\item
$\br_0(f):=\Psi_{[f]}$ and $\bc_0(f)$ is defined as
\[
\dim f<h^0([f])-1.
\]

\item
$\br_1(f):=\Psi_c$ with $c:=[f]+\kf$ and $\bc_1(f)$
is defined as
\[
h^0([f]+\kf)>1
\quad\wedge\quad
\neg
\bigl(
[f]^2>\mc{c}^2=c\cdot\mc{c}=0
\bigr).
\]

\item
$\br_2(f):=\Psi_b$ with
$b:=\frac{1}{\gcd[f]}[f]$
and $\bc_2(f)$ is defined as
\[
\gcd[f]>1.
\]
\end{Mlist}

\begin{example}
\label{exm:cdeg2}
Let us consider $f\c\P^2\to\P^3$ in \EXM{B1}
which is defined as
\[
x\mapsto (x_0^2 + x_1^2 + x_2^2: x_0\,x_1: x_0\,x_2: x_1\,x_2).
\]
We have $[f]=2\,\e_0$ and $\bc_0(f)=1$.
The reduction $\br_0(f)\c\P^2\to\P^5$
is up to a choice of basis defined as
\[
x\mapsto (x_0^2 : x_1^2 : x_2^2: x_0\,x_1: x_0\,x_2: x_1\,x_2).
\]
Since $[\br_0(f)]=2\,\e_0$ and $\kappa_{\br_0(f)}=-3\,\e_0$
it follows that $h^0(\br_0(f)+\kappa_{\br_0(f)})=0$ and thus $\bc_1(\br_0(f))=0$.
We verify that $\bc_2(\br_0(f))=1$ as $\gcd [\br_0(f)]=2$
and thus
the reduction $(\br_2\circ\br_0)(f)\c\P^2\to\P^2$
is up to a choice of basis defined as $x\mapsto (x_0:x_1:x_2)$.
\END
\end{example}

\begin{proposition}
\label{prp:cdeg}
$\cdeg \br_i(f)<\cdeg f$
for all $f\in\cM_{\bc_i}$ and $i\in\{1,2\}$.
\end{proposition}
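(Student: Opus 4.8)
The plan is to treat the two cases $i=2$ and $i=1$ separately, since the reductions $\br_2$ and $\br_1$ have very different flavours. The case $i=2$ is essentially immediate from the definitions: if $f\in\cM_{\bc_2}$ then $\gcd[f]=:e>1$, and $\br_2(f)=\Psi_b$ with $b=\tfrac1e[f]$. Writing $d:=\cdeg f$ (a single integer if $\dom f=\P^2$, a pair if $\dom f=\PP$), the class $[f]$ has leading coefficient(s) $d$, so $b$ has leading coefficient(s) $d/e$. Now $\cdeg\br_2(f)=\cdeg\Psi_b=[\Psi_b]$-degree, and I would argue that $\cdeg\Psi_b\le d/e<d=\cdeg f$: the parametric map $\Psi_b$ is built from the moving part of the linear system $V_b$ of forms of (bi-)degree $d/e$ with the prescribed multiplicities, so its component degree is at most $d/e$ (dividing out a gcd only lowers it). Since $e\ge 2$ this gives the strict inequality. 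The only subtlety is to make sure $V_b$ is nonzero so that $\Psi_b$ is defined — but $b=\tfrac1e[f]$ is by construction a class with positive coefficients coming from an actual map, and in fact $V_{[f]}\subseteq V_b\cdot(\text{forms of degree }d-d/e)$-type reasoning, or more simply one notes $[f]=e\,b$ and the components of $f$ exhibit nonzero sections, so $h^0(b)\ge 1$; I would phrase this cleanly using $\mc{b}$ and $h^0$.

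For the case $i=1$, the key identity is $\cdeg\br_1(f)=\cdeg\Psi_c$ with $c=[f]+\kf$. When $\dom f=\P^2$ and $\cdeg f=d$, the canonical class contributes $\kf=-3\e_0+\sum\e_i$, so the $\e_0$-coefficient of $c$ is $d-3$; similarly when $\dom f=\PP$ with $\cdeg f=(d_1,d_2)$, the class $c$ has $\l_0$-coefficient $d_1-2$ and $\l_1$-coefficient $d_2-2$. The claim $\cdeg\Psi_c<\cdeg f$ then amounts to: the component (bi-)degree of $\Psi_c$ does not exceed the leading coefficient(s) of $c$, i.e. $d-3<d$ (resp. $(d_1-2,d_2-2)<(d_1,d_2)$ componentwise, hence strictly in the partial order). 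The first inequality $\cdeg\Psi_c\le(\text{leading coeff of }c)$ is the same general principle as in the $i=2$ case: $\Psi_c=\varphi_M$ where $M$ is the moving part of $V_c$, and $V_c\subseteq W$, the space of all forms of the given (bi-)degree, so its moving part consists of forms of (bi-)degree at most that bound. This needs $V_c\neq 0$, which is exactly guaranteed by the first conjunct $h^0([f]+\kf)>1$ in the definition of $\bc_1$ — so the hypothesis $f\in\cM_{\bc_1}$ is used precisely here to ensure $\Psi_c$ is well-defined and has positive-degree components.

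I expect the main obstacle to be the careful bookkeeping around the degree-bound lemma "$\cdeg\Psi_c\le$ leading coefficient of $c$" and the well-definedness of $\Psi_c$, including the possibility that the moving part strips off a common factor (which only helps) versus the degenerate possibility that $c$ is not effective (excluded by $\bc_1$, $\bc_2$). A secondary point is handling the $\PP$ case, where "$\cdeg$" is a pair and "$<$" must be read in the product order $\Z_{\ge0}^2$; here one should note $d_1,d_2\ge 2$ whenever $f\in\cM_{\bc_1}$ and $\dom f=\PP$ — otherwise $h^0([f]+\kf)>1$ would fail, since with, say, $d_1\le 1$ there are no forms of $\l_0$-degree $d_1-2<0$ — so $(d_1-2,d_2-2)$ is a genuine nonnegative pair strictly below $(d_1,d_2)$ in both coordinates. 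Once these foundational observations are in place, both cases of the proposition follow by a one-line comparison of leading coefficients. I would write the $\P^2$ case in full and remark that the $\PP$ case is "analogous", per the convention set in \DEF{nota}.
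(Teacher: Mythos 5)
Your proposal is correct and is exactly the intended unpacking of the paper's one-line proof (``Straightforward consequence of the definitions''): compare the leading coefficient(s) of $c=[f]+\kf$ (dropping by $3$, resp.\ $(2,2)$) and of $b=\tfrac{1}{\gcd[f]}[f]$ (divided by $\gcd[f]\geq 2$) against $\cdeg f$, using that $\cdeg\Psi_c$ is bounded above by the leading coefficient of $c$ since passing to the moving part only lowers degree. One small caveat: neither of your side justifications of $h^0(b)\geq 1$ is valid as stated (the inclusion $V_{[f]}\subseteq V_b\cdot(\ldots)$ fails in general, and $h^0(e\,b)>0$ does not imply $h^0(b)>0$), but this is immaterial here since the well-definedness of $\br_2\c\cM_{\bc_2}\to\cM$ is built into the paper's definition of the reducer.
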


\begin{proof}
Straightforward consequence of the definitions.
\end{proof}

\begin{theorem}
\label{thm:C}
The function $\bp$ is a projective invariant
and the reducers $\br_0$, $\br_1$ and $\br_2$ are compatible.
\end{theorem}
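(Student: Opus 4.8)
The plan is to establish the three assertions --- that $\bp$ is a projective invariant, and that each of $\br_0$, $\br_1$, $\br_2$ is a compatible reducer --- more or less independently, since the conditions $\bc_0,\bc_1,\bc_2$ and the reduction maps are unrelated. For the projective-invariance of $\bp$, I would first observe that $[f]$, $[f]^2$, $h^0([f])$ and $\gcd[f]$ all depend only on the linear series $|V_f|$ on $\bmd f$, hence on the pair $(\bmd f, \text{hyperplane class})$ up to isomorphism; a projective isomorphism $p\colon\img f\to\img g$ pulls back hyperplane sections to hyperplane sections, so it identifies the associated linear systems and therefore the base-model data. More precisely, if $p\in\cP(f,g)$ then $p\circ f$ and $g$ have the same image and the same associated vector space up to a choice of basis, so by \ALG{get} they have the same base points with the same multiplicities; this gives $[p\circ f]=[g]$, and since $[p\circ f]=[f]$ (composition with a linear automorphism of the target does not change base points in the source), we get $\bp(f)=\bp(g)$. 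The intersection number $[f]^2$ and $h^0([f])=\dim V_{[f]}$ are then determined by the class, as is $\gcd[f]$.

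For $\br_0$: the condition $\bc_0(f)=(\edim f<h^0([f])-1)$ is a projective invariant because both $\edim f$ and $h^0([f])$ are, by the previous paragraph (note $\edim f$ is just the dimension of the ambient projective space of $\img f$, unchanged by $p$). For the inclusion $\cR(f,g)\subseteq\cR(\br_0(f),\br_0(g))$: if $r\in\cR(f,g)$ with $p\circ f=g\circ r$, then $r$ identifies the associated vector spaces $V_f$ and $V_g$ up to the linear change coming from $p$; but $\br_0(f)=\Psi_{[f]}=\varphi_M$ where $M$ is the moving part of the full space $V_{[f]}$ of all forms of degree $\cdeg f$ with the prescribed base points, and this space is intrinsic to the base points of $f$ (equivalently to $\bmd f$ and the class), so $r$ also identifies $V_{[f]}$ with $V_{[g]}$ and hence realizes a projective isomorphism $\img\br_0(f)\to\img\br_0(g)$ compatible with $r$. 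The key point is that $r$ being a reparametrization for $f$ forces $r$ to preserve the base-point configuration, and $\Psi_{[f]}$ is built only from that configuration.

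For $\br_2$: here $b=\frac{1}{\gcd[f]}[f]$ and $\bc_2(f)=(\gcd[f]>1)$, which is a projective invariant by the first paragraph. The reduction $\br_2(f)=\Psi_b$ --- informally, ``take a $(\gcd[f])$-th root of the linear system'' --- is again determined by the class $[f]$ alone, so the same argument as for $\br_0$ applies: a reparametrization $r$ compatible with $f$ preserves $[f]$, hence preserves $b$, hence is compatible with $\Psi_b$. For $\br_1$: this is the adjunction step, $c=[f]+\kf$, and I expect this to be the main obstacle. The condition $\bc_1(f)$ involves $h^0([f]+\kf)>1$ together with the negation of a degenerate-geometry clause $[f]^2>\mc{c}^2=c\cdot\mc{c}=0$; one must check each ingredient is a projective invariant, which reduces to showing $\mc{c}$ and the various intersection numbers are determined by $[f]$ (and $\kf$, which is intrinsic to $\bmd f$). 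The harder half is the inclusion $\cR(f,g)\subseteq\cR(\br_1(f),\br_1(g))$: one must argue that a reparametrization $r$ with $p\circ f=g\circ r$ also satisfies $p'\circ\br_1(f)=\br_1(g)\circ r$ for some projective isomorphism $p'$. The idea is that the adjoint system $|V_{[f]+\kf}|$ (or its moving part) is canonically associated to $\bmd f$ together with its hyperplane class --- adjunction commutes with isomorphisms of polarized surfaces --- so $r$, which identifies $(\bmd f,[f])$ with $(\bmd g,[g])$, automatically identifies the adjoint data and hence $\br_1(f)$ with $\br_1(g)$ up to a linear map on the target. Making this precise requires care that the moving part and the class $\mc{c}$ behave well, and that the exceptional clause in $\bc_1$ exactly excludes the cases where the adjoint map fails to be birational onto its image or fails to decrease the component degree as claimed in \PRP{cdeg}; this is where I expect the real work, and it likely draws on the same adjunction-theoretic bookkeeping that the proof of \PRP{cdeg} and \THM{B} will use in \SEC{proofs}.
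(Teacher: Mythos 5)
There is a genuine gap, and it sits at the center of your argument: the repeated claim that a compatible reparametrization $r$ (or a projective isomorphism $p$) ``identifies the base-point configurations,'' so that $[p\circ f]=[g]$ and the pairs $(\bmd f,[f])$ and $(\bmd g,[g])$ are identified. This is exactly what fails in the setting of this paper. The classes $[f]$ and $[g]$ live in different lattices $N(\bmd f)$ and $N(\bmd g)$, and the base models need not be isomorphic at all --- in \EXM{B2} one has $\dom f=\P^2$ and $\dom g=\PP$. A compatible reparametrization $r\c\dom f\dto\dom g$ is in general a nonlinear birational map that blows up base points and contracts curves (this is the paper's stated point of departure from \cite{hau}), so it does not carry $V_{[f]}$ to $V_{[g]}$ by substitution, and ``preserving the base-point configuration'' is not available. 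What you can legitimately extract from $p\circ f=g\circ r$ is that $V_{g\circ r}=V_{p\circ f}=V_f$ as spaces of forms on $\dom f$ (this is the content of \EQN{J}), but that compares $f$ with $g\circ r$, not with $g$.

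The paper's proof supplies the missing mechanism: it lifts the relation $\beta\circ\varphi_{[f]}=\varphi_{[g]}\circ\alpha$ to a common smooth resolution $S$ of the induced birational map $\alpha\c\bmd f\dto\bmd g$, using the factorization theorem for birational maps between smooth surfaces \citep[Corollary~1-8-4]{mat}, obtaining birational morphisms $s\c S\to\bmd f$ and $t\c S\to\bmd g$ with $s^*[f]=t^*[g]$. All three compatibilities and the invariance of $\bp$ are then read off on $S$: $h^0$ and self-intersection are preserved under pullback, $\gcd$ is preserved because $[f]$ is orthogonal to the contracted exceptional classes, the identity $s^*\fa=t^*\fb$ handles $\br_2$, and for $\br_1$ one uses $s^*\kf=\k-\fa$ and $s_*\k=\kf$, so that $s^*[f]+\k=t^*[g]+\k$ pushes forward to $[f]+\kf$ and $[g]+\kg$ respectively, the fixed part that appears when $\fa\neq 0$ not affecting the associated map. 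Your instinct that adjunction commutes with isomorphisms of polarized surfaces is the right moral, but without the common resolution (or an equivalent device) none of the identifications you assert are justified; moreover your treatment of $\br_1$, which you correctly single out as the hard case, is explicitly deferred rather than carried out.
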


\begin{definition}
We say that $f\in\cM$ is characterized by a \df{base case}
if
\[
\bc_0(f)=\bc_1(f)=\bc_2(f)=0
\]
and either one of the following five cases holds:
\begin{Mlist}
\item {\bf B1.} $h^0([f])=3$, $[f]^2=1$ and $\img f\cong\P^2$.
\item {\bf B2.} $h^0([f])=4$, $[f]^2=2$ and $\img f$ is a quadric surface.
\item {\bf B3.} $h^0([f])=[f]^2+1$, $1\leq [f]^2\leq 8$ and $\img \Psi_{\alpha [f]}$ is a del Pezzo surface, where $\alpha:=\max(4-[f]^2,1)$. Moreover, the map $\Psi_{\alpha [f]}$ is birational and if $[f]^2\geq 3$, then $\img \Psi_{\alpha [f]}$ is covered by conics.
\item {\bf B4.} $h^0(2\,[f]+\kf)\geq 2$, $\mc{2\,[f]+\kf}^2=0$ and $\img f$  is a surface covered by lines.
\item {\bf B5.} $h^0([f]+\kf)\geq 2$, $\mc{[f]+\kf}^2=0$ and $\img f$ is a surface covered by conics or lines.
\END
\end{Mlist}
\end{definition}

\begin{example}
\label{exm:B4}
Suppose that $f\in\cM$ is a rational map
\st $\dom f=\P^2$ and $[f]=8\,\e_0-5\,\e_1-3\,\e_2-3\,\e_3$.
We set $u:=[f]+\kappa_f=5\,\e_0-4\,\e_1-2\,\e_2-2\,\e_3$.
Since~$u$ is negative against
the classes of lines defined by $\e_0-\e_1-\e_2$ and $\e_0-\e_1-\e_4$
it follows that $u\neq\mc{u}$.
We subtract these classes from $u$
and find that $[\br_1(f)]=\mc{u}=3\,\e_0-2\e_1-\e_2-\e_3$ (see \EXM{cdeg3}).
We notice that $\bc_0(\br_1(f))=\bc_1(\br_1(f))=\bc_2(\br_1(f))=0$,
since $[\br_1(f)]+\k_{\br_1(f)}=-\e_1$ and $\gcd [\br_1(f)]=1$.
The class $v:=2\,[\br_1(f)]+\k_{\br_1(f)}=3\,\e_0-3\,\e_1-\e_2-\e_3$
is negative against the classes
$\e_0-\e_1-\e_2$ and $\e_0-\e_1-\e_4$.
Therefore, $\mc{v}=\e_0-\e_1$ and
thus $\br_1(f)$ is characterized by base case~B4.
\END
\end{example}

Recall \RMK{reducer} for the purpose of the following theorem.

\begin{theorem}
\label{thm:B}
Suppose that $f\in \cM$ \st $\bc_0(f)=\bc_1(f)=\bc_2(f)=0$.
\begin{Mclaims}
\item[\bf a)]
If $[f]^2>0$, then $f$ is characterized by base case B1, B2, B3, B4 or B5.

\item[\bf b)]
If $[f]^2=0$, then
there exists no $g\in\cM_{\bc_1}$ \st
$[g]^2>0$ and
$f\in\{\br_1(g),~ (\br_2\circ\br_1)(g) \}$.

\end{Mclaims}
\end{theorem}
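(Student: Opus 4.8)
The statement has two parts, and I would treat them separately, both resting on the classification of surfaces via adjunction encoded in the earlier definitions. For part (a), I would argue that when $\bc_0(f)=\bc_1(f)=\bc_2(f)=0$ and $[f]^2>0$, the base model $\bmd f$ together with the linear system $|V_{[f]}|$ is forced into one of a short list of possibilities. The key numerical levers are: $\bc_0(f)=0$ means $\edim f=h^0([f])-1$, so $f$ is already the full associated map; $\bc_2(f)=0$ means $\gcd[f]=1$, so $[f]$ is primitive; and $\bc_1(f)=0$ means $h^0([f]+\kf)\leq 1$ \emph{or} the degenerate equality $[f]^2>\mc{c}^2=c\cdot\mc{c}=0$ with $c=[f]+\kf$ holds. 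The first sub-case $h^0([f]+\kf)\leq 1$ is the classical ``adjunction terminates'' situation: by the adjunction-theoretic classification of polarized rational surfaces (Sommese–Van de Ven style, or the del Pezzo / scroll / conic-bundle trichotomy), $([f])^2>0$ together with $h^0(K+[f])\leq 1$ forces $\img f$ to be $\P^2$, a quadric, or a (weak) del Pezzo surface — these are exactly B1, B2, B3. The second sub-case, where $h^0([f]+\kf)>1$ but the moving part $\mc{c}$ satisfies $\mc{c}^2=c\cdot\mc{c}=0$, is precisely the situation where adjunction does not terminate but the adjoint system maps to a curve; this is the classical signature of a surface covered by lines or conics, giving B4 or B5. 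So the plan for (a) is: split on the two ways $\bc_1(f)=0$ can hold, invoke the adjunction classification in the first, and in the second analyze the fibration $\Psi_c$ onto a curve to land in B4/B5 — with the $h^0(2[f]+\kf)$ versus $h^0([f]+\kf)$ distinction separating lines from conics.

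**The main obstacle.**
The hard part will be making the B4/B5 dichotomy precise and exhaustive — i.e. verifying that when $\mc{c}^2 = c\cdot\mc{c}=0$ the fibres of the Stein factorization of $\Psi_c$ really are lines or conics in $\img f$, and that the numerical conditions listed in B4 and B5 ($h^0(2[f]+\kf)\geq 2$, $\mc{2[f]+\kf}^2=0$, resp.\ $h^0([f]+\kf)\geq 2$, $\mc{[f]+\kf}^2=0$) are forced and not merely consistent. One must rule out the possibility that a general fibre has higher degree, and handle the boundary between the del Pezzo regime and the covered-by-conics regime (note B3 itself already says $\img\Psi_{\alpha[f]}$ is covered by conics when $[f]^2\geq 3$, so there is deliberate overlap that needs to be stated, not eliminated). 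I expect this to require a careful intersection-number computation on $\bmd f$ using the adjunction formula $p_a(D) = 1 + \tfrac12 D\cdot(D+\kf)$ applied to fibre classes, together with the observation that $D\cdot\kf = -2$ forces $D$ to be a smooth rational curve meeting $\kf$ like a line, and $D\cdot\kf = -1$ (or the doubled system) picks out conics.

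**Part (b).**
For the second claim, I would argue by contradiction: suppose $f=\br_1(g)$ or $f=(\br_2\circ\br_1)(g)$ for some $g\in\cM_{\bc_1}$ with $[g]^2>0$. In the first case $[f]=\mc{[g]+\kg}$, and $f$ being a base case with $[f]^2=0$ means the adjoint-moving class is isotropic. I would use \PRP{cdeg} (component degree strictly drops under $\br_1$) together with the numerical relation between $[g]$, $\kg$ and $\mc{[g]+\kg}$: writing $c = [g]+\kg$, one has $[f] = \mc c = c - (\text{sum of contracted }(-1)\text{-classes})$, and $[f]^2 = c\cdot\mc c$ by the standard fact that subtracting a fixed part does not change the intersection of the moving class with the full class. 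So $[f]^2=0$ would give $c\cdot\mc c = 0$, i.e.\ exactly the degenerate clause in the definition of $\bc_1$; combined with $[g]^2 = [f]^2$-type comparisons this would force $\bc_1(g)=0$ (the negated bracket in $\bc_1$), contradicting $g\in\cM_{\bc_1}$. The case $f=(\br_2\circ\br_1)(g)$ is handled the same way after dividing by $\gcd$, which does not affect the sign of the square or the vanishing. The crux is the bookkeeping identity $\mc c^2 = c\cdot\mc c$ for moving parts and matching it against the precise Boolean structure of $\bc_1$; I do not anticipate a genuine geometric difficulty here, only the need to track the definitions carefully, so I would write (b) as a short formal deduction from the definitions of $\br_1$, $\br_2$, $\bc_1$ and the moving-part intersection identity.
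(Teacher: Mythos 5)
Your overall adjunction-theoretic philosophy matches the paper's, but in both halves the proposal assumes away the step where the actual work lies.

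In part (a), the assignment of base cases to the two branches of $\bc_1(f)=0$ is incorrect. The branch $h^0([f]+\kf)\leq 1$ does \emph{not} force $\img f$ to be $\P^2$, a quadric or a del Pezzo: the Sommese--Van de Ven trichotomy you yourself cite also contains scrolls and conic bundles whose adjoint system is empty. For instance $\PP$ polarized by $\l_0+2\,\l_1$ has $h^0([f]+\kf)=h^0(-\l_0)=0$, satisfies $\bc_0=\bc_1=\bc_2=0$ and $[f]^2=4>0$, and its image is a quartic scroll in $\P^5$, i.e.\ case B4 --- not B1--B3. The paper handles this branch by first passing to a \emph{reduction pair} $(S,\fh)$ with $\tau(\fh)>0$ (\LEM{pair}, \LEM{adjoint}a), proving $(\fh+\tau(\fh)\,\k)^2=0$ and $\tau(\fh)\leq 1$ (\LEM{c1a}), and only then splitting on $\fh+\tau(\fh)\,\k=0$ (weak del Pezzo, B1--B3, \LEM{B123}) versus $\fh+\tau(\fh)\,\k\neq 0$ (a pencil of rational curves met by $\fh$ in one or two points, B4/B5, \LEM{pencil} and \LEM{B45}). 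Conversely, the other branch ($h^0(c)>1$ with $[f]^2>\mc{c}^2=c\cdot\mc{c}=0$) yields only B5, since there $[f]\cdot[C]=-\kf\cdot[C]=2$ (\LEM{B5direct}). As written, the scroll and conic-bundle cases with small adjoint system fall through your case analysis.

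In part (b), the ``standard fact'' $\mc{c}^2=c\cdot\mc{c}$ --- equivalently, that the moving part of $|c|$ is orthogonal to its fixed part --- is false in general: on the Hirzebruch surface $\F_2$ with $c=\sigma+f$ ($\sigma$ the $(-2)$-section, $f$ a fiber) the fixed part of $|c|$ is $\sigma$, the moving part is $f$, and $f\cdot\sigma=1\neq 0$. Establishing this orthogonality under the actual hypotheses ($c=[g]+\kg$ with $[g]$ nef and big) is precisely the content of the paper's \LEM{zero}: one contracts the $(-1)$-curves negative against $c$ (\LEM{adjoint}b) to reach a model where $\mu_*c$ is nef, applies \LEM{fiber}c to conclude that every fixed component is contracted into a fiber of $\varphi_{\mc{c}}$, and only then deduces $\mc{c}\cdot(c-\mc{c})=0$, which contradicts the Boolean structure of $\bc_1(g)=1$. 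Your endgame (forcing the negated bracket in $\bc_1$ to fire) is the right one, but the identity you take for granted is the theorem's real content, not bookkeeping.
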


\begin{example}
If $f,g\in\cM$ \st
$[f]=3\,\e_0-\e_1-\cdots-\e_9$
and
$[g]=6\,\e_0-2\,\e_1-\cdots-2\,\e_9$,
then $h^0([f])=2$, $[f]^2=0$ and $f=\br_1(g)$.
However, $[g]^2=0$ and thus this is not a counter example to \THM{B}b.
\END
\end{example}

\section{Reparametrizations for B1 and B2}
\label{sec:B}

In this section we characterize a super set for~$\cR(f,g)$
in case $f,g\in\cM$ are characterized by base case B1 or B2.
The main results of this article are translated into \ALG{alg}.

\begin{proposition}[B1]
\label{prp:B1}
If $f,g\in\cM$ are both characterized by B1, then
\[
\cR(f,g)
~\subseteq~
\cS:=\set{ g^{-1}\circ r_c \circ f }{ c\in \cI_{\P^2} }.
\]
\end{proposition}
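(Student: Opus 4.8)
The plan is to show that every compatible reparametrization $r\in\cR(f,g)$ is, up to the choices of basis implicit in $f$ and $g$, a linear automorphism of $\P^2$, hence equals $g^{-1}\circ r_c\circ f$ for some $c\in\cI_{\P^2}$ after absorbing $f$ and $g$. The starting point is the definition of $\cR(f,g)$: if $r\in\cR(f,g)$, there is $p\in\cP(f,g)$ with $p\circ f=g\circ r$, so $r=g^{-1}\circ p\circ f$ as a birational map $\dom f=\P^2\dashrightarrow\P^2=\dom g$. The entire content is therefore to prove that this birational self-map is in fact biregular, i.e. lies in $\aut(\P^2)$; then \LEM{aut} identifies $\aut(\P^2)$ with $(r_c)_{c\in\cI_{\P^2}}$ and we are done (noting that $g^{-1}\circ(r_c\circ f)$ has the same image data as $g^{-1}\circ p\circ f$ because $r_c$ ranges over all of $\aut(\P^2)$ and the composite $g^{-1}\circ p\circ f$ is exactly of the form $g^{-1}\circ(\text{linear})\circ f$).

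The key geometric input is the hypothesis that $f$ and $g$ are characterized by base case B1: $h^0([f])=3$, $[f]^2=1$ and $\img f\cong\P^2$ (and likewise for $g$). First I would observe that since $\bc_0(f)=0$ we have $\edim f=h^0([f])-1=2$, so $\img f\subseteq\P^2$ is a surface, forcing $\img f=\P^2$; and since $\bc_2(f)=0$ we have $\gcd[f]=1$, which together with $[f]^2=1$ and $\deg(\img f)=\deg(f)\cdot[f]^2$ (from \EXM{cdeg3}) gives $\deg(f)=1$, i.e. $f\colon\P^2\dashrightarrow\P^2$ is birational. The same holds for $g$. Consequently $p\colon\img f=\P^2\to\img g=\P^2$ is a projective isomorphism of $\P^2$, hence linear, and $r=g^{-1}\circ p\circ f$ is a composition of three birational self-maps of $\P^2$ whose "degrees'' (classes) I want to control. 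Concretely, since $f$ and $g$ are birational with $h^0([f])=h^0([g])=3$, and $\Psi_{[f]}=\br_0(f)$ realizes $\P^2$ by the complete linear system $|[f]|$, the map $f$ is (up to a linear change of target coordinates) equal to $\varphi_{|[f]|}$, whose class has self-intersection $1$; a linear system of self-intersection $1$ and projective dimension $2$ on a rational surface that maps birationally to $\P^2$ is, after contracting the exceptional curves of $\bmd f$, the system of lines. So $f$ and $g$ are both "Cremona-reduced'' to the identity, and the composite becomes an honest linear map.

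The cleanest way to package this is via the N\'eron–Severi lattice. Let $h_f:=[f]$ and $h_g:=[g]$ be the hyperplane classes on $\bmd f$, $\bmd g$, both of self-intersection $1$. The birational map $r=g^{-1}\circ p\circ f$ lifts to an isomorphism (or at least a birational correspondence inducing a lattice isometry) $N(\bmd g)\to N(\bmd f)$ carrying $h_g$ to $h_f$, because $p$ is linear and therefore preserves hyperplane classes. A class of self-intersection $1$ that is base-point-free and big on a rational surface, mapping birationally to $\P^2$, must be $\e_0$ (the pullback of a line under the blow-down to $\P^2$); so $r$ is, up to the blow-downs $\bmd f\to\P^2$ and $\bmd g\to\P^2$, an isomorphism $\P^2\to\P^2$ pulling back lines to lines, hence an element of $\aut(\P^2)$. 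I would then invoke \LEM{aut} to write it as $r_c$, $c\in\cI_{\P^2}$, and conclude $r=g^{-1}\circ p\circ f=g^{-1}\circ r_c\circ f$, which is the asserted inclusion $\cR(f,g)\subseteq\cS$.

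I expect the main obstacle to be the bookkeeping around the "choice of basis'' in \DEF{map}: $f$ itself already incorporates a choice of basis of $V_{[f]}$, and so does $g$, so the linear map $p$ on $\img f$ is only well-defined up to these choices, and one must check that absorbing these choices into the $r_c$ does not change the set $\cS$ — precisely because $\cI_{\P^2}$ parametrizes \emph{all} of $\aut(\P^2)$ and $\cS$ is defined by conjugating that whole group, this works, but it needs to be said carefully. A secondary subtlety is justifying that a birational map $\P^2\dashrightarrow\P^2$ arising this way is genuinely biregular: this rests on the fact that $f$ and $g$ are given by \emph{complete} linear systems of the minimal projective dimension (forced by $\bc_0(f)=\bc_0(g)=0$), so that the base points of $f$ and of $g\circ r$ must coincide for $p\circ f=g\circ r$ to hold, and then a degree count on the classes (using $[f]^2=[g]^2=1$ and $\gcd=1$) pins the class of $r$ down to $\e_0\mapsto\e_0$, leaving no room for blown-up base points.
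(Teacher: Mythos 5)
The argument the paper actually needs is the one you relegate to a parenthetical in your first paragraph: since $\bc_0(f)=0$ forces $\edim f=h^0([f])-1=2$, the image $\img f$ is all of $\P^2$ (and likewise for $g$), so any $p\in\cP(f,g)\subseteq\aut(\P^{\edim f})=\aut(\P^2)$ equals $r_c$ for some $c\in\cI_{\P^2}$ by Lemma~\ref{lem:aut}; then $r=g^{-1}\circ p\circ f=g^{-1}\circ r_c\circ f\in\cS$ and you are done. This is the paper's entire (one-line) proof, and your derivation that $\img f=\P^2$ and that $f,g$ are birational is a correct and useful expansion of it.

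The rest of your proposal, however, pursues a claim that is both unnecessary and false. You announce that ``the entire content is to prove that this birational self-map [$r=g^{-1}\circ p\circ f$] is in fact biregular,'' and your third paragraph concludes that $r\in\aut(\P^2)$. But B1 does not force $f$ to be base-point free: for example $[f]=2\,\e_0-\e_1-\e_2-\e_3$ (a quadratic Cremona system through three general points) satisfies $h^0([f])=3$, $[f]^2=1$, $\gcd[f]=1$ and $\img f=\P^2$, so such an $f$ is characterized by B1. Taking $g$ linear, every $r\in\cR(f,g)$ is then a quadratic Cremona transformation, not a linear automorphism. The flaw in your N\'eron--Severi argument is that the blow-down under which $[f]$ becomes the class $\e_0$ of a line (obtained by contracting the $(-1)$-curves orthogonal to $[f]$, in the spirit of Lemma~\ref{lem:adjoint}) is a different morphism from $\pi\colon\bmd f\to\dom f$, landing in a different copy of $\P^2$; ``$r$ is linear up to those blow-downs'' does not make $r$ linear as a self-map of $\dom f$. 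Fortunately nothing in the statement requires $r$ to be biregular --- $\cS$ is defined by sandwiching $r_c$ between $f$ and $g^{-1}$ precisely so that the base-point behaviour of $f$ and $g$ is absorbed --- so you should simply delete that part of the argument and keep the first one.
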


\begin{proof}
By \THM{B} we have
$\img f\cong\P^2\cong \img g$ and $\aut(\P^2)$
is characterized in \LEM{aut}.
\end{proof}

\begin{example}[B1]
\label{exm:B1b}
Let $f\c \P^2\to \P^3$ be defined as in \EXM{cdeg2}
and recall that~$(\br_2\circ\br_0)(f)\c\P^2\to\P^2$
is a projective isomorphism.
Thus if $f=g$,
then it follows from \PRP{B1} that
$(r_c)_{c\in\cI_{\P^2}}$
is a super set of $\cR((\br_2\circ\br_0)(f),(\br_2\circ\br_0)(g))$
and thus also a super set of $\cR(f,g)$.
\END
\end{example}

\begin{proposition}[B2]
\label{prp:B2}
Suppose that $f,g\in\cM$ are characterized by base case~B2.
We consider the following \df{set of classes of lines} that are contained in $\img f$:
\[
\bF(f):=\set{c\in N(\bmd f)}{c^2=0,~ [f]\cdot c=1,~ c=\mc{c}}.
\]
\begin{Mclaims}
\item[\bf a)]
If $|\bF(f)|\geq 2$, then $\bF(f)=\{a,b\}$, $\bF(g)=\{u,v\}$ and
\begin{multline*}
\cR(f,g)
~~\subseteq~~
\cS:=
\set{ (\Psi_u\times\Psi_v)^{-1} \circ r_c\circ (\Psi_a\times\Psi_b) }{c\in \cI_{\PP}}
\\
\cup~~
\set{ (\Psi_u\times\Psi_v)^{-1} \circ r_c\circ (\Psi_b\times\Psi_a) }{c\in \cI_{\PP}}.
\end{multline*}

\item[\bf b)]
If $|\bF(f)|\leq 1$, then
$\img f$ and $\img g$ are quadric cones
and
\[
\cR(f,g)
~~\subseteq~~
\cS:=\set{g^{-1}\circ t^{-1}\circ r_c\circ s\circ f}{c\in \cI}
\]
where
$\cI:=\set{c\in\F^{16}}{r_c\in \aut(\P^3) \text{ and } r_c(Q)=Q}$
\st
$Q\subset\P^3$ is some fixed quadric cone in diagonal form
and
$s,t\in \aut(\P^3)$ are projective isomorphisms
\st $s(\img f)=t(\img g)=Q$.
\end{Mclaims}
\end{proposition}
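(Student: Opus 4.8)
\textbf{Proof plan for Proposition~\ref{prp:B2}.} The strategy is to exploit the structure of base case B2: here $\img f$ and $\img g$ are quadric surfaces in $\P^3$, and a quadric surface is either smooth (isomorphic to $\PP$) or a quadric cone, and this dichotomy is governed exactly by the geometry of the rulings. The key reparametrization identity we must respect is that any $r\in\cR(f,g)$ satisfies $p\circ f=g\circ r$ for some $p\in\cP(f,g)$, and such a $p$ restricts to a projective isomorphism $\img f\to\img g$; conversely it suffices to parametrize \emph{all} projective isomorphisms $\img f\to\img g$ and pull them back through $f$ and $g$. So the whole proof reduces to: (i) identify the surface type from $\bF(f)$, and (ii) describe $\{p\in\aut(\P^3)\mid p(\img f)=\img g\}$ concretely in each case, then compose with $f$ and $g^{-1}$.

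\textbf{Case a) $|\bF(f)|\geq 2$.} First I would argue that the classes $c\in\bF(f)$ are exactly the classes of the ruling lines of $\img f$: the conditions $c^2=0$, $[f]\cdot c=1$, $c=\mc{c}$ say that $c$ is the class of an irreducible curve whose image under $f$ is a line lying on $\img f$, and on a quadric the only such curves are members of the two rulings. A smooth quadric has exactly two such classes $\{a,b\}$, so $|\bF(f)|=2$ forces $\img f\cong\PP$; moreover the parametric maps $\Psi_a,\Psi_b\colon\bmd f\dto\P^1$ are the two ruling projections, and $\Psi_a\times\Psi_b\colon\dom f\dto\PP$ together with the analogous $\Psi_u\times\Psi_v$ for $g$ realize the isomorphisms $\img f\cong\PP\cong\img g$ after the appropriate change of parametrization. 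Any projective isomorphism $\img f\to\img g$ then either preserves or swaps the two rulings, which under these identifications corresponds to an element of $\aut_\circ(\PP)$ composed with either the identity or the swap $\PP\to\PP$; this is precisely the case distinction $(\Psi_a\times\Psi_b)$ versus $(\Psi_b\times\Psi_a)$ in the statement. Combining with \LEM{aut}, which says $\aut_\circ(\PP)=(r_c)_{c\in\cI_{\PP}}$, gives the claimed two-block description of $\cS$; since every compatible reparametrization arises this way, $\cR(f,g)\subseteq\cS$.

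\textbf{Case b) $|\bF(f)|\leq 1$.} The complementary case: if $\img f$ is not a smooth quadric, then being a quadric surface in $\P^3$ it must be a quadric cone (a rank-3 quadric), which carries a single one-parameter family of ruling lines, all through the vertex, so $|\bF(f)|\leq 1$; and since $\img g$ has the same invariants $\bp(g)=\bp(f)$ with $|\bF(g)|\leq 1$, it too is a quadric cone. Every quadric cone in $\P^3$ is projectively equivalent to a fixed one $Q$ in diagonal form, so I fix isomorphisms $s,t\in\aut(\P^3)$ with $s(\img f)=t(\img g)=Q$. Then $p\in\aut(\P^3)$ maps $\img f$ to $\img g$ if and only if $t\circ p\circ s^{-1}$ is an automorphism of $\P^3$ preserving $Q$, i.e.\ $p=t^{-1}\circ r_c\circ s$ with $r_c$ ranging over $\{r_c\mid r_c(Q)=Q\}$; composing with $f$ on the right and $g^{-1}$ on the left gives exactly the stated form $g^{-1}\circ t^{-1}\circ r_c\circ s\circ f$. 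Again the containment $\cR(f,g)\subseteq\cS$ follows because every $r\in\cR(f,g)$ comes from some such $p$.

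\textbf{Main obstacle.} The routine parts are the $\aut(\P^3)$-bookkeeping and invoking \LEM{aut}. The delicate point is step (i)—correctly proving that $\bF(f)$ detects the quadric type, i.e.\ that the abstract lattice-theoretic conditions $c^2=0$, $[f]\cdot c=1$, $c=\mc{c}$ single out precisely the ruling classes and nothing more, and that there are exactly two of them on a smooth quadric and at most one on a cone. This needs a small argument (via the base model $\bmd f$ and the fact that $\Psi_c$ is the associated linear system of a pencil cut out on $\img f$ by hyperplanes through a line) rather than a pure computation, and it is where one must be careful that infinitely near base points and contracted curves do not produce spurious classes in $\bF(f)$.
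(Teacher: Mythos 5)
Your proposal follows the same strategy as the paper's proof: for case a), identify $\bF(f)$ and $\bF(g)$ with the classes of the two rulings, use $\Psi_a\times\Psi_b$ and $\Psi_u\times\Psi_v$ to realize $\img f$ and $\img g$ birationally as $\PP$, observe that a projective isomorphism either preserves or swaps the rulings, and conclude via \LEM{aut}; for case b), diagonalize the quadratic forms to put both cones in the normal form $Q$ and transport automorphisms through $s$ and $t$. The only thing the paper makes explicit that you gloss over is that $\mu:=(\Psi_a\times\Psi_b)\circ f^{-1}$ and $\nu:=(\Psi_u\times\Psi_v)\circ g^{-1}$ are birational \emph{morphisms} (using $a^2=b^2=0$ and $a\cdot b=1$ to verify the fibers of the two rulings meet in a single point), which is what lets a projective isomorphism of quadrics descend to a biregular automorphism of $\PP$ rather than merely a birational map; your phrasing "realize the isomorphisms after the appropriate change of parametrization" is gesturing at this but should be tightened to the factorization through $\mu$ and $\nu$. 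Your flagged obstacle (that $\bF(f)$ detects the rulings and nothing else) is likewise left at the level of an observation in the paper, so your assessment of where the substance lies is accurate.
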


\begin{proof}
a)
We observe that $\bF(f)=\{a,b\}$
where $a$ and $b$ are classes of intersecting lines on $\img f$ so that $h^0(a)=h^0(b)=2$ and $a\cdot b=1$.
Since $a^2=0$, we obtain after resolving its base points a morphism $\xi_a\c\bmd f\to\P^1$
and thus the composition $\Psi_a\circ f^{-1}\c\img f\to\P^1$ is a morphism as well.
Moreover, the following maps are birational morphisms
\begin{gather*}
\mu:=(\Psi_a\times\Psi_b)\circ f^{-1}\c\img f\to \PP,
\\
\nu:=(\Psi_u\times\Psi_v)\circ g^{-1}\c\img g\to \PP.
\end{gather*}
Indeed, $a\cdot b=1$ and
thus the preimages $\mu^{-1}(\{t\}\times\P^1)$
and $\mu^{-1}(\P^1\times \{t'\})$
are lines that intersect at the unique point in $\img f$ whose image is $(t,t')\in\PP$
for all $t,t'\in\P^1$.
Now let the line $L\subset \img f$ be defined by $f\big(\Psi_a^{-1}(t)\big)$
for some $t\in\P^1$.
For all $p\in\cP(f,g)$ there exists $t'\in\P^1$ and $u\in\bF(g)$ \st
$p(L)\subset\img g$ is defined by $g\big(\Psi_u^{-1}(t')\big)$.
It follows that for all $p\in\cP(f,g)$ there exists a biregular map
$r\c\P^1\to\P^1$ \st
$r\circ\Psi_a\circ f^{-1}=\Psi_u\circ g^{-1}\circ p$.
Therefore, for all $p\in\cP(f,g)$
there exists $r\in\aut(\PP)$
\st $r\circ\mu=\nu\circ p$.
By interchanging $a$ and $b$
we may assume \Wlog that $r\in\aut_\circ(\PP)$ so that the factors of $\PP$ are not flipped by $r$.
The proof is now concluded with \LEM{aut}.

For assertion b) we consider the symmetric matrices defined by the quadratic forms
associated to $\img f,\img g\subset\P^3$.
We use matrix diagonalization on these matrices to compute $Q\subset\P^3$ and $s,t\in \aut(\P^3)$.
\end{proof}

The following example for case B2 explains how to compute $\cS$ in case the
quadrics are doubly ruled.

\begin{example}[B2]
\label{exm:B2}
Suppose that we are given the following birational maps:
\begin{gather*}
f\c\P^2\dto X\subset\P^3,\quad (x_0:x_1:x_2)\mapsto
(x_0^6x_1^2: x_0x_1^5x_2^2: x_1^3x_2^5: x_0^5x_1x_2^2 + 2x_0^5x_2^3),
\\
g\c\PP\dto Y\subset\P^3,\quad (y_0:y_1;y_2:y_3)\mapsto
(y_0^3y_1^2y_2^5:
y_0^3y_1^2y_2^5 + y_1^5y_2^3y_3^2:
\\
y_0^2y_1^3y_3^5:
y_0^4y_1y_2^3y_3^2 + y_0^5y_2^2y_3^3 + y_0^2y_1^3y_3^5).
\end{gather*}
Our goal is to compute the projective isomorphisms $\cP(f,g)$.
We use \ALG{get} to compute
the base points of the linear series associated to~$f$ and $g$.
We find that $f$ has simple base points at
$p_1:=(0:0:1)$,
$p_2:=(0:1:0)$ and
$p_3:=(1:0:0)$
with multiplicities $3$, $2$ and $2$, \resp.
The infinitely near relations between the
remaining 10 base points $p_4,\ldots,p_{13}$ of $f$ are as follows:
\begin{gather*}
p_7\rightsquigarrow p_6 \rightsquigarrow p_4 \rightsquigarrow p_1,
\quad
p_9\rightsquigarrow p_8 \rightsquigarrow p_5 \rightsquigarrow p_2,
\\
p_{10}\rightsquigarrow p_{11} \rightsquigarrow p_2,
\quad
p_{13}\rightsquigarrow p_{12} \rightsquigarrow p_3.
\end{gather*}
The simple base points of $g$ are
$q_1:=(0:1;0:1)$, $q_2:=(0:1;1:0)$, $q_3:=(1:0;0:1)$, $q_4:=(1:0;1:0)$
and have each multiplicity $2$.
The infinitely near relations between the
remaining 8 base points $q_5,\ldots,q_{12}$ of $g$ are as follows:
\begin{gather*}
q_6\rightsquigarrow q_5\rightsquigarrow q_1,\quad
q_8\rightsquigarrow q_7\rightsquigarrow q_2,\quad
q_{10}\rightsquigarrow q_9\rightsquigarrow q_3,\quad
q_{12}\rightsquigarrow q_{11}\rightsquigarrow q_4.
\end{gather*}
The multiplicities of the base points are encoded by the
classes of~$f$ and $g$:
\begin{gather*}
[f]=8\,\e_0-3\,\e_1-3\,\e_2-2\,\e_3-2\,\e_4-2\,\e_5-\e_6-\ldots-\e_{13}
\quad\text{and}
\\
[g]=5\,\l_0+5\,\l_1-2\,\p_1-2\,\p_2-2\,\p_3-2\,\p_4-\p_5-\ldots-\p_{12}.
\end{gather*}
We observe that $[f]^2=[g]^2=\deg X=\deg Y=26$.
We apply \ALG{set}
and find that $h^0([f])=h^0([g])=16$
and thus $(\bc_0(f),\bc_0(g))=(1,1)$ since $\dim f= \dim g=3<16-1$.

We set $(\hf,\hg):=(\br_0(f),\br_0(g))$
so that $\bp(\hf)=\bp(\hg)=(16,26,1)$.
Notice that $[\hf]=[f]$ and $[\hg]=[g]$
as a direct consequence of the definitions.
We find that $(\bc_1(\hf),\bc_1(\hg))=(1,1)$ and $h^0(\hf+\khf)=h^0(\hg+\khg)=12$.

We set $(\hf,\hg):=(\br_1(\hf),\br_1(\hg))$ so that
$\bp(\hf)=\bp(\hg)=(12,14,1)$ and
\begin{gather*}
[\hf]=5\,\e_0-2\,\e_1-2\,\e_2-\e_3-\e_4-\e_5
\quad\text{and}\\
[\hg]=3\,\l_0+3\,\l_1-\p_1-\p_2-\p_3-\p_4.
\end{gather*}
We remark that
$[f]+\kf=\mc{[f]+\kf}$ and $[g]+\kg=\mc{[g]+\kg}$, unlike as we have seen in \EXM{B4}.
We find that $(\bc_1(\hf),\bc_1(\hg))=(1,1)$ and $h^0(\hf+\khf)=h^0(\hg+\khg)=4$.

We set $(\hf,\hg):=(\br_1(\hf),\br_1(\hg))$ so that
$\bp(\hf)=\bp(\hg)=(4,2,1)$ and
\begin{gather*}
[\hf]=2\,\e_0-\e_1-\e_2
\quad\text{and}\quad
[\hg]=\l_0+\l_1.
\end{gather*}
We verify that $h^0(\hf+\khf)=h^0(\hg+\khg)=0$
so that $(\bc_1(f),\bc_1(g))=(0,0)$.
It follows from \THM{B} that $f$ and $g$
are characterized by base case~B2.

We find that $\bF(\hf)=\{a,b\}$ and $\bF(\hg)=\{u,v\}$ as defined at \PRP{B2},
where $a:=\e_0-\e_1$, $b:=\e_0-\e_2$, $u:=\l_0$ and $v:=\l_1$
so that
\begin{gather*}
\Psi_{a}\times\Psi_{b}\c\P^2\dto\PP,\quad x\mapsto (x_0:x_1;x_0:x_2),
\\
\Psi_{b}\times\Psi_{a}\c\P^2\dto\PP,\quad x\mapsto (x_0:x_2;x_0:x_1),\quad \text{and}
\\
\Psi_{u}\times\Psi_{v}\c \PP\to\PP,\quad y\mapsto y.
\end{gather*}
We consider the following reparametrizations $\P^2\dto\PP$:
\begin{gather*}
s_c:x\mapsto
(
c_0\,x_0 + c_1\,x_1:
c_2\,x_0 + c_3\,x_1;
c_4\,x_0 + c_5\,x_2:
c_6\,x_0 + c_7\,x_2
),
\\
t_c:x\mapsto
(
c_0\,x_0 + c_1\,x_2:
c_2\,x_0 + c_3\,x_2;
c_4\,x_0 + c_5\,x_1:
c_6\,x_0 + c_7\,x_1
),
\end{gather*}
and we set
$\cS:=\{s_c\}_{c\in \cI_{\PP}}\cup\{t_c\}_{c\in \cI_{\PP}}$.
It follows from \PRP{B2} that $\cS\supseteq\cR(f,g)$.

Let us first consider the reparametrizations $\{s_c\}_{c\in \cI_{\PP}}$.
For general $c\in\cI_{\PP}$ we observe that $\cdeg(g\circ s_c)=10$, although $\cdeg( f)=8$.
We use \citep[Algorithm~2]{n-bp} to compute
\[
\cJ':=\set{c\in\cI}{ g\circ s_c \text{ has the same base points as } f },
\]
and find that
$\cJ'=\cJ_{0}\cup\cJ_{1}$, where
\begin{gather*}
\cJ_{0}=\set{c\in \cI_{\PP}}{ c_0=c_4=1,~ c_1=c_2=c_5=c_6=0 },
\\
\cJ_{1}=\set{c\in \cI_{\PP}}{ c_1=c_5=1,~ c_0=c_3=c_4=c_7=0 }.
\end{gather*}
If we substitute $s_c$ into $g$ for $c\in\cJ'$, then
the greatest common divisor of the components is $x_0^2$.
Therefore $\cdeg(g\circ s_c)=8=\cdeg(f)$ as required.
Next we enforce that the $4\times 45$ coefficient matrix $M_{g\circ s_c}$
has the same kernel as the coefficient matrix $M_f$ and compute the corresponding index sets:
\begin{gather*}
\cJ_0'':=\set{ c\in \cJ_0}{  M_{g\circ r_c}\cdot \ker M_f=\zm }=\set{c\in \cJ'}{ c_7=2\,c_3 }\quad\text{and}
\\
\cJ_1'':=\set{ c\in \cJ_1}{  M_{g\circ r_c}\cdot \ker M_f=\zm }=\emptyset.
\end{gather*}
Next, we perform the same procedure for $\{t_c\}_{c\in \cI_{\PP}}$, but
in this case we arrive at only empty-sets.
Therefore, it follows that $\cJ$ as defined at~\EQN{J} is equal to~$\cJ_0''$.
We apply \PRP{S} and recover $\cP(f,g)$ in terms of a matrix parametrized in terms of $c_3\neq 0$:
{\footnotesize%
\[
U:=
\begin{bmatrix}
1 & 0        & 0         &      0\\
1 & 4\,c_3^5 & 0         &      0\\
0 & 0        & 32\,c_3^6 &      0\\
0 & 0        & 32\,c_3^6 & 4\,c_3
\end{bmatrix}.
\]
}%
Indeed, if we substitute $\chi_{_U}\circ f$ with indeterminate $c_3$ into the equation of $Y\subset \P^3$,
then we obtain 0.
See \cite{github} for an implementation of this example.
\END
\end{example}

\begin{remark}
The current state is summarized in \ALG{alg}
whose correctness follows from \THM{C} and \THM{B}.
Recall that in case B3, B4 and B5, the surface~$\img \hf$
is covered by lines or conics.
Such surfaces are theoretically well understood,
but a complete algorithmic description is left as future work.
\END
\end{remark}

\begin{algorithm}
\caption{}
\label{alg:alg}
\begin{itemize}[itemsep=0pt,topsep=5pt,leftmargin=5mm]

\item {\bf Input.}
Birational maps $f,g\in\cM$.

\item {\bf Output.} The set of projective isomorphisms $\cP(f,g)$.

\item {\bf Method.} {\it We use the \# symbol for comments.}
\item[] Compute the base points of $f$ and $g$ with \ALG{get}.
\item[] {\bf if\quad}$\bp(f)\neq\bp(g) ${\bf\quad then\quad}{\bf return\quad}$\emptyset$;
\item[] $(\hf,\hg):=(f,g)$;
\item[] {\bf if\quad}$(\bc_0(\hf),\bc_0(\hg))=(1,1)${\bf\quad then\quad} $(\hf,\hg) :=(\br_0(\hf),\br_0(\hg))$;

\item[]
{\bf while \quad}
$(\bc_1(\hf),\bc_1(\hg))=(1,1)$
{\bf\quad do:\quad}
\begin{itemize}[itemsep=0pt,topsep=0pt,leftmargin=10mm]
\item[] $(\hf,\hg) :=(\br_1(\hf),\br_1(\hg))$;
\end{itemize}
\item[] {\bf if\quad}$(\bc_2(\hf),\bc_2(\hg))=(1,1)${\bf\quad then\quad} $(\hf,\hg) :=(\br_2(\hf),\br_2(\hg))$;
\item[] {\bf if\quad}$\bp(\hf)\neq\bp(\hg) ${\bf\quad then\quad}{\bf return\quad}$\emptyset$;
\item[] {\bf if\quad}$h^0([\hf])=3${\bf\quad and\quad}$[\hf]^2=1${\bf\quad then\quad}{\it\# case B1}
\begin{itemize}[itemsep=0pt,topsep=0pt,leftmargin=10mm]
\item[] Set $\cS$ as defined in \PRP{B1}.
\end{itemize}

\item[] {\bf else if\quad}$h^0([\hf])=4${\bf\quad and\quad}$[\hf]^2=2${\bf\quad then\quad}{\it\# case B2}
\begin{itemize}[itemsep=0pt,topsep=0pt,leftmargin=10mm]
\item[] Set $\cS$ as defined in \PRP{B2}.
\end{itemize}

\item[] {\bf else if\quad}$h^0([\hf])=[\hf]^2+1${\bf\quad and\quad}$1\leq [\hf]^2\leq 8${\bf\quad then\quad}{\it\# case B3}
\begin{itemize}[itemsep=0pt,topsep=0pt,leftmargin=10mm]
\item[] {\it \# Not considered in this article.}
\end{itemize}

\item[] {\bf else if\quad}$h^0(2\,[\hf]+\khf)\geq 2${\bf\quad and\quad}$\mc{2\,[\hf]+\khf}^2=0${\bf\quad then\quad}{\it\# case B4}
\begin{itemize}[itemsep=0pt,topsep=0pt,leftmargin=10mm]
\item[] {\it \# Not considered in this article.}
\end{itemize}

\item[] {\bf else if\quad}$h^0([\hf]+\khf)\geq 2${\bf\quad and\quad}$[\mc{[\hf]+\khf}^2=0${\bf\quad then\quad}{\it\# case B5}
\begin{itemize}[itemsep=0pt,topsep=0pt,leftmargin=10mm]
\item[] {\it \# Not considered in this article.}
\end{itemize}

\item[] {\bf else:\quad}{\bf return\quad}$\emptyset$;

\item[] Compute $\cP(f,g)$ from $\cS\supseteq\cR(f,g)$ using \PRP{S}.

\item[] {\bf return\quad}$\cP(f,g)$;
\end{itemize}
\end{algorithm}

\section{Applications of the algorithm}
\label{sec:A}

In this section we outline how
to find the projective isomorphism
that correspond to
(non-) Euclidean isomorphisms between rational surfaces.

The \df{M\"obius quadric} is defined as
\[
\S^n:=\set{ x\in\P^{n+1} }{ -x_0^2+x_1^2+\ldots+x_{n+1}^2=0 }.
\]
Let $\pi\c \S^n\dto \P^n$
be the \df{stereographic projection} defined as
\[
\pi\c\S^n\dto\P^n,\qquad (x_0:\ldots:x_{n+1}) \mapsto (x_0-x_{n+1}:x_1:\ldots:x_n),
\]
with inverse $\pi^{-1}\c \P^n\dto \S^n$,
\[
z
\mapsto
(z_0^2+z_1^2+\ldots+z_n^2:2z_0z_1:\ldots:2z_0z_n: -z_0^2+z_1^2+\ldots+z_n^2).
\]

Suppose that $f,g\in\cM$ are birational.

The \df{affine isomorphisms}
between $\img f$ and $\img g$
are defined as
\[
\set{\rho\in\cP(f,g)}{\rho(\H_n)=\H_n},
\]
where $\H_n:=\set{z\in\P^n}{z_0=0}$.

The \df{Euclidean isomorphisms}
between $\img f$ and $\img g$
are defined as
\[
\set{\rho\in\cP(f,g)}{\rho(\E_n)=\E_n},
\]
where $\E_n:=\set{z\in\H_n}{z_0^2+\ldots+z_n^2=0}$.
We remark that Euclidean isomorphisms are perhaps better known as ``Euclidean similarities''.

The \df{M\"obius isomorphisms} between $\img f$ and $\img g$ are defined as
\[
\set{\pi \circ \rho \circ \pi^{-1}}{ \rho\in \cP(\pi^{-1}\circ f,\pi^{-1}\circ g) \text{ and } \rho(\S^n)=\S^n}.
\]
Notice that if $\alpha\c \P^n\dto \P^n$ is a M\"obius isomorphism \st $\alpha(\img f)=\img g$,
then $\alpha$ is a birational quadratic map \st $\alpha(\E_n)=\E_n$.

It is straightforward to recover the affine, Euclidean, or M\"obius isomorphisms from the output
$\cP(f,g)$ or $\cP(\pi^{-1}\circ f,\pi^{-1}\circ g)$ of \ALG{alg}.

\begin{example}
We continue with \EXM{B1}, where $f$ parametrizes a Roman surface
and where the projective isomorphism $\chi_{_U}\in \cP(f,f)$
corresponding to $c=(0,1,0,1,0,0,0,0,1)$ is
defined as $\chi_{_U}(x)=(x_0:x_1:x_3:x_2)$.
We check that $\chi_{_U}(\E_3)=\E_3$ and thus $\chi_{_U}$ is an Euclidean isomorphism.
In fact, we verify that $|\cP(f,f)|=|\set{\rho\in\cP(f,f)}{\rho(\E_3)=\E_3}|=24$
and thus the Roman surface admits 24 Euclidean symmetries, namely the symmetries
of a tetrahedron.
\END
\end{example}

\section{The proofs of the theorems}
\label{sec:proofs}

In this section we prove \THM{C} and \THM{B}.
We assume that the reader
is familiar with the material of \citep[Sections~II.7 and V.3]{har}
and \citep[Chapter~1]{mat}.

\begin{definition}
\label{def:map2}
Suppose that $Z$ is a rational surface
and recall that on rational surfaces the numerical- and rational- equivalence
relations for divisor classes are the same.
Let $c\in N(Z)$ be a class \st $h^0(c)>0$ and
let $V:=H^0(Z,c)$ denote the vector space of global sections over the ground field~$\F$.
We define $\varphi_c\c Z\to \P^{h^0(c)-1}$ as $\varphi_V$ as defined in \DEF{map}.
\END
\end{definition}

\begin{proof}[Proof of \THM{C}.]
We consider the birational morphism
$\pi\c\bmd f\to \dom f$ that resolves the base locus of~$f$
so that the composition $\Psi_{[f]}\circ\pi\c\bmd f\to \img f$ is a morphism.
Notice that $[f]\in N(\bmd f)$
is the divisor class of the pullback of a hyperplane section along this morphism
and that $\kf$ is the canonical class of~$\bmd f$.
We recall \DEF{map2} and notice that for all $c\in N(\bmd f)$
the map~$\varphi_c$ is up to a choice of basis equivalent to the morphism $\Psi_c\circ\pi$.
In particular, we will notice that the diagrams in this proof
remain commutative when an arrow for $\pi\c\bmd f\to\dom f$ is included.

The case $\cR(f,g)=\emptyset$ is trivial
and therefore we will assume that $\cR(f,g)\neq \emptyset$.
Suppose that $\gamma\in\cR(f,g)$ is an arbitrary but fixed
compatible reparametrization and
let $\beta\in\cP(f,g)$ be a projective isomorphism \st $\beta\circ f=g\circ\gamma$.

First we observe that $\cR(\br_i(f),\br_i(g))$ does not depend on the choice
of basis of the associated maps $\br_i(f)$ and $\br_i(g)$
for all $i\in\{0,1,2\}$ (see \DEF{map}).

In order to show that $\br_0$ is compatible we need to show that
the condition~$\bc_0$ is a projective invariant and that
$\gamma\in\cR(\br_0(f),\br_0(g))$ if $(\bc_0(f),\bc_0(g))=(1,1)$.

Let $\hX:=\img\br_0(f)$, $\hY:=\img\br_0(g)$ and recall that $\br_0(f)=\Psi_{[f]}$.
As a consequence of the definitions,
there exists a birational and degree preserving linear projection~$\rho_f\c \hX\to \img f$
so that $\rho_f\circ\Psi_{[f]}=f$.
Let $\alpha\c \bmd f\dto \bmd g$ be the birational map that makes the
diagram of \FIG{r0a} commutative.

\begin{figure}[!ht]
\centering
\begin{tikzpicture}[node distance=13mm, auto]
\usetikzlibrary{arrows.meta}
\node (C1) {};
\node (A) [right of=C1] {$\bmd f$};
\node (C3) [right of=A] {};
\node (S) [right of=C3]  {};
\node (C5) [right of=S]  {};
\node (C6) [right of=C5] {};
\node (C7) [right of=C6] {};
\node (B) [right of=C7] {$\bmd g$};

\node (X)  [below of=A] {$\hX$};
\node (E2) [right of=X]  {};
\node (hX) [right of=E2] {$\img f$};
\node (E4) [right of=hX]  {};
\node (hY) [right of=E4] {$\img g$};
\node (E6) [right of=hY]  {};
\node (Y)  [right of=E6] {$\hY$};
\node (Df) [below of=E2] {$\dom f$};
\node (Dg) [below of=E6] {$\dom g$};

\draw[->] (A) to node [swap] {$\varphi_{[f]}$} (X);
\draw[->] (B) to node {$\varphi_{[g]}$} (Y);
\draw[->, dashed] (Df) to node {$\br_0(f)$} (X);
\draw[->, dashed] (Dg) to node [swap] {$\br_0(g)$} (Y);


\draw[->, dashed] (Df) to node {$\gamma$} (Dg);

\draw[->, dashed] (Df) to node [swap] {$f$} (hX);
\draw[->, dashed] (Dg) to node {$g$} (hY);

\draw[->] (X) to node {$\rho_f$} (hX);
\draw[->] (Y) to node [swap] {$\rho_g$} (hY);
\draw[->, dashed] (A) to node {$\alpha$} (B);

\draw[->] (hX) to node {$\beta$} (hY);
\end{tikzpicture}
\caption{See proof of \THM{C}.}
\label{fig:r0a}
\end{figure}

It follows from the factorization theorem for birational maps \citep[Corollary~1-8-4]{mat}
that there exists a smooth surface $S$ and birational morphisms
$s\c S\to \bmd f$ and $t\c S\to \bmd g$
so that the diagram in \FIG{r0b} commutes.

\begin{figure}[!ht]
\centering
\begin{tikzpicture}[node distance=13mm, auto]
\usetikzlibrary{arrows.meta}
\node (C1) {};
\node (C2) [right of=C1] {};
\node (C3) [right of=C2] {};
\node (S) [right of=C3]  {$S$};
\node (C5) [right of=S]  {};
\node (C6) [right of=C5] {};
\node (C7) [right of=C6] {};

\node (A) [below of=C1] {$\bmd f$};;
\node (B) [below of=C7] {$\bmd g$};

\node (X)  [below of=A] {$\hX$};
\node (E2) [right of=X]  {};
\node (hX) [right of=E2] {$\img f$};
\node (E4) [right of=hX]  {};
\node (hY) [right of=E4] {$\img g$};
\node (E6) [right of=hY]  {};
\node (Y)  [right of=E6] {$\hY$};
\node (Df) [below of=E2] {$\dom f$};
\node (Dg) [below of=E6] {$\dom g$};

\draw[->] (A) to node [swap] {$\varphi_{[f]}$} (X);
\draw[->] (B) to node {$\varphi_{[g]}$} (Y);
\draw[->, dashed] (Df) to node {$\br_0(f)$} (X);
\draw[->, dashed] (Dg) to node [swap] {$\br_0(g)$} (Y);

\draw[->, red] (S) to node [swap] {$s$} (A);
\draw[->, red] (S) to node {$t$} (B);

\draw[->, dashed] (Df) to node {$\gamma$} (Dg);

\draw[->, dashed] (Df) to node [swap] {$f$} (hX);
\draw[->, dashed] (Dg) to node {$g$} (hY);
\draw[->] (X) to node {$\rho_f$} (hX);
\draw[->] (Y) to node [swap] {$\rho_g$} (hY);
\draw[->, dashed] (A) to node {$\alpha$} (B);
\draw[->] (hX) to node {$\beta$} (hY);
\end{tikzpicture}
\caption{See proof of \THM{C}.}
\label{fig:r0b}
\end{figure}

Since $s^*[f]=t^*[g]$ we find that $\hX=\img\varphi_{s^*[f]}$ is
projectively isomorphic to $\hY=\img\varphi_{t^*[g]}$.
Hence,
$\bc_0$ is a projective invariant and
there exists a projective isomorphism
$\hat{\beta}\c \hX\to \hY$
that makes the diagram in \FIG{r0c} commutative.
It follows that
$\gamma\in\cR(\br_0(f),\br_0(g))$
and thus $\br_0$ is a compatible reducer as asserted.

\begin{figure}[!ht]
\centering
\begin{tikzpicture}[node distance=13mm, auto]
\usetikzlibrary{arrows.meta}
\node (C1) {};
\node (C2) [right of=C1] {};
\node (C3) [right of=C2] {};
\node (S) [right of=C3]  {$S$};
\node (C5) [right of=S]  {};
\node (C6) [right of=C5] {};
\node (C7) [right of=C6] {};

\node (A) [below of=C1] {$\bmd f$};;
\node (B) [below of=C7] {$\bmd g$};

\node (X)  [below of=A] {$\hX$};
\node (E2) [right of=X]  {};
\node (hX) [right of=E2] {};
\node (E4) [right of=hX]  {};
\node (hY) [right of=E4] {};
\node (E6) [right of=hY]  {};
\node (Y)  [right of=E6] {$\hY$};
\node (Df) [below of=E2] {$\dom f$};
\node (Dg) [below of=E6] {$\dom g$};

\draw[->] (A) to node [swap] {$\varphi_{[f]}$} (X);
\draw[->] (B) to node {$\varphi_{[g]}$} (Y);
\draw[->, dashed] (Df) to node {$\br_0(f)$} (X);
\draw[->, dashed] (Dg) to node [swap] {$\br_0(g)$} (Y);

\draw[->] (S) to node [swap] {$s$} (A);
\draw[->] (S) to node {$t$} (B);

\draw[->, dashed] (Df) to node {$\gamma$} (Dg);

\draw[->, dashed,red] (S) to node {$\varphi_{s^*[f]}$} (X);
\draw[->, dashed,red] (S) to node [swap] {$\varphi_{t^*[g]}$} (Y);

\draw[->,red] (X) to node {$\hat{\beta}$} (Y);

\end{tikzpicture}
\caption{See proof of \THM{C}.}
\label{fig:r0c}
\end{figure}

In the remainder of the proof we assume that $f=\br_0(f)=\Psi_{[f]}$.
Since $\br_0$ is compatible, this assumption is without loss of generality.

In order to show that $\br_2$ is compatible we need to show that
$\bc_2$ is a projective invariant and that
$\gamma\in\cR(\br_2(f),\br_2(g))$ if $(\bc_2(f),\bc_2(g))=(1,1)$.

As before, let $\alpha\c\bmd f\dto\bmd g$ be a birational map
\st
$\beta\circ \varphi_{[f]}=\varphi_{[g]}\circ\alpha$.
It follows from the factorization theorem for birational maps
that there exists a smooth surface $S$ and birational morphisms
$s\c S\to \bmd f$ and $t\c S\to \bmd g$
so that the diagram in \FIG{r2a} commutes.

\begin{figure}[!ht]
\centering
\begin{tikzpicture}[node distance=13mm, auto]
\usetikzlibrary{arrows.meta}
\node (C1) {};
\node (C2) [right of=C1] {};
\node (C3) [right of=C2] {};
\node (S) [right of=C3]  {$S$};
\node (C5) [right of=S]  {};
\node (C6) [right of=C5] {};
\node (C7) [right of=C6] {};

\node (A) [below of=C1] {$\bmd f$};;
\node (B) [below of=C7] {$\bmd g$};

\node (X)  [below of=A] {$\img f$};
\node (E2) [right of=X]  {};
\node (hX) [right of=E2] {};
\node (E4) [right of=hX] {};
\node (hY) [right of=E4] {};
\node (E6) [right of=hY] {};
\node (Y)  [right of=E6] {$\img g$};
\node (Df) [below of=E2] {$\dom f$};
\node (Dg) [below of=E6] {$\dom g$};

\draw[->] (A) to node [swap] {$\varphi_{[f]}$} (X);
\draw[->] (B) to node {$\varphi_{[g]}$} (Y);
\draw[->, dashed] (Df) to node {$f$} (X);
\draw[->, dashed] (Dg) to node [swap] {$g$} (Y);

\draw[->, red] (S) to node [swap] {$s$} (A);
\draw[->, red] (S) to node {$t$} (B);
\draw[->, dashed] (A) to node {$\alpha$} (B);
\draw[->] (X) to node {$\beta$} (Y);
\draw[->, dashed] (Df) to node {$\gamma$} (Dg);
\end{tikzpicture}
\caption{See proof of \THM{C}.}
\label{fig:r2a}
\end{figure}

Let $\hX:=\img\br_2(f)$, $\hY:=\img\br_2(g)$,
$\fa:=\frac{1}{\gcd f}[f]$ and $\fb:=\frac{1}{\gcd g}[g]$.
We consider the diagram of \FIG{r2b} that is commutative as a direct
consequence of the definitions.

\begin{figure}[!ht]
\centering
\begin{tikzpicture}[node distance=13mm, auto]
\usetikzlibrary{arrows.meta}
\node (C1) {};
\node (C2) [right of=C1] {};
\node (C3) [right of=C2] {};
\node (S) [right of=C3]  {$S$};
\node (C5) [right of=S]  {};
\node (C6) [right of=C5] {};
\node (C7) [right of=C6] {};

\node (A) [below of=C1] {$\bmd f$};;
\node (B) [below of=C7] {$\bmd g$};

\node (X)  [below of=A] {$\img f$};
\node (E2) [right of=X]  {};
\node (hX) [right of=E2] {$\hX$};
\node (E4) [right of=hX]  {};
\node (hY) [right of=E4] {};
\node (E6) [right of=hY]  {};
\node (Y)  [right of=E6] {$\img g$};
\node (Df) [below of=E2] {$\dom f$};
\node (Dg) [below of=E6] {$\dom g$};

\draw[->] (A) to node [swap] {$\varphi_{[f]}$} (X);
\draw[->] (B) to node {$\varphi_{[g]}$} (Y);
\draw[->, dashed] (Df) to node {$f$} (X);
\draw[->, dashed] (Dg) to node [swap] {$g$} (Y);

\draw[->] (S) to node [swap] {$s$} (A);
\draw[->] (S) to node {$t$} (B);

\draw[->, dashed] (Df) to node {$\gamma$} (Dg);
\draw[->, dashed] (Df) to node [swap] {$\br_2(f)$} (hX);
\draw[->, very thick, red, dashed] (A) to node {$\varphi_\fa$} (hX);
\draw[->, very thick, red, dashed] (S) to node {$\varphi_{s^*\fa}$} (hX);
\end{tikzpicture}
\caption{See proof of \THM{C}.}
\label{fig:r2b}
\end{figure}

We observe that $s^*[f]=t^*[g]$ as these
are the classes of the pullback of a hyperplane section of projectively isomorphic surfaces.
Since $[f]$ and $[g]$ are orthogonal to the classes
of $(-1)$-curves contracted by $s$ and $t$, \resp,
we observe that
$\gcd [f]=\gcd s^*[f]$ and $\gcd [g]=\gcd s^*[g]$
so that
\[
\gcd[f]=\gcd[g]
\quad\text{and}\quad
s^*\fa
=
\frac{1}{\gcd[f]}s^*[f]
=
\frac{1}{\gcd[g]}t^*[g]
=
t^*\fb.
\]
It follows that
$\hX=\img\varphi_{s^*\fa}$ is
projectively isomorphic to $\hY=\img\varphi_{t^*\fb}$.
Hence, the condition~$\bc_2$ is a projective invariant and
there exists a projective isomorphism
$\hat{\beta}\c\hX\to\hY$
that makes the diagram in \FIG{r2c} commutative.
We conclude that
$\gamma\in\cR(\br_2(f),\br_2(g))$
and thus $\br_2$ is a compatible reducer as asserted.

\begin{figure}[!ht]
\centering
\begin{tikzpicture}[node distance=13mm, auto]
\usetikzlibrary{arrows.meta}
\node (C1) {};
\node (C2) [right of=C1] {};
\node (C3) [right of=C2] {};
\node (S) [right of=C3]  {$S$};
\node (C5) [right of=S]  {};
\node (C6) [right of=C5] {};
\node (C7) [right of=C6] {};

\node (A) [below of=C1] {$\bmd f$};;
\node (B) [below of=C7] {$\bmd g$};

\node (X)  [below of=A] {$\img f$};
\node (E2) [right of=X]  {};
\node (hX) [right of=E2] {$\hX$};
\node (E4) [right of=hX]  {};
\node (hY) [right of=E4] {$\hY$};
\node (E6) [right of=hY]  {};
\node (Y)  [right of=E6] {$\img g$};
\node (Df) [below of=E2] {$\dom f$};
\node (Dg) [below of=E6] {$\dom g$};

\draw[->] (A) to node [swap] {$\varphi_{[f]}$} (X);
\draw[->] (B) to node {$\varphi_{[g]}$} (Y);
\draw[->, dashed] (Df) to node {$f$} (X);
\draw[->, dashed] (Dg) to node [swap] {$g$} (Y);

\draw[->] (S) to node [swap] {$s$} (A);
\draw[->] (S) to node {$t$} (B);

\draw[->, dashed] (Df) to node {$\gamma$} (Dg);

\draw[->, dashed] (Df) to node [swap] {$\br_2(f)$} (hX);
\draw[->, dashed] (Dg) to node {$\br_2(g)$} (hY);
\draw[->, dashed] [swap] (S) to node {$\varphi_{s^*\fa}$} (hX);
\draw[->, dashed] (S) to node {$\varphi_{t^*\fb}$} (hY);

\draw[->, very thick, red,] (hX) to node {$\hat{\beta}$} (hY);
\end{tikzpicture}
\caption{See proof of \THM{C}.}
\label{fig:r2c}
\end{figure}

We can now see that $\bp$ is a projective invariant as asserted.
Indeed, $[f]^2=\deg(\img f)$ and
$h^0([f])$ is
the embedding dimension of the image of $\br_0(f)$.
Since $\br_2$ is compatible it follows that $\gcd[f]$ is
a projective invariant as well.

It is only left to show that the reducer~$\br_1$ is compatible.
Thus we need to show that
$\bc_1$ is a projective invariant and
$\gamma\in\cR(\br_1(f),\br_1(g))$ if $(\bc_1(f),\bc_1(g))=(1,1)$.
We assume that \Wlog that $\bc_1(f)=1$.

As before, let $\alpha\c\bmd f\dto\bmd g$ be a birational map
\st
$\beta\circ \varphi_{[f]}=\varphi_{[g]}\circ\alpha$
and recall from the factorization theorem for birational maps
that there exists a smooth surface $S$ and birational morphisms
$s\c S\to \bmd f$ and $t\c S\to \bmd g$
so that the diagram in \FIG{r2a} commutes.
We set $\hX:=\img\br_1(f)$ and $\hY:=\img\br_2(g)$.
Let $\k$ denote the canonical class of $S$
and let $\fa$ denote the sum of the pullbacks of the classes of $(-1)$-curves
that are contracted by $s$.
We have $s^*\kf=\k-\fa$ and $s_*\k=\kf$ by \citep[Proposition~V.3.3]{har}.
We observe that
\[
\varphi_{s^*([f]+\kf)}=\varphi_{[f]+\kf}\circ s
\quad\text{and}\quad
s_*(s^*[f]+\k)=[f]+\kf,
\]
and thus $\varphi_{s^*[f]+\k}$ makes the diagram
of \FIG{r1b} commutative.
We remark that if $\fa\neq 0$,
then the linear series $|s^*[f]+\k|$
has a fixed part, since $(s^*[f]+\k)\cdot\fa<0$.

\begin{figure}[!ht]
\vspace{-5mm}
\centering
\begin{tikzpicture}[node distance=13mm, auto]
\usetikzlibrary{arrows.meta}
\node (C1) {};
\node (C2) [right of=C1] {};
\node (C3) [right of=C2] {};
\node (S) [right of=C3]  {$S$};
\node (C5) [right of=S]  {};
\node (C6) [right of=C5] {};
\node (C7) [right of=C6] {};

\node (A) [below of=C1] {$\bmd f$};;
\node (B) [below of=C7] {$\bmd g$};

\node (X)  [below of=A] {$\img f$};
\node (E2) [right of=X]  {};
\node (hX) [right of=E2] {$\hX$};
\node (E4) [right of=hX]  {};
\node (hY) [right of=E4] {};
\node (E6) [right of=hY]  {};
\node (Y)  [right of=E6] {$\img g$};
\node (Df) [below of=E2] {$\dom f$};
\node (Dg) [below of=E6] {$\dom g$};

\draw[->] (A) to node [swap] {$\varphi_{[f]}$} (X);
\draw[->] (B) to node {$\varphi_{[g]}$} (Y);
\draw[->, dashed] (Df) to node {$f$} (X);
\draw[->, dashed] (Dg) to node [swap] {$g$} (Y);

\draw[->] (S) to node [swap] {$s$} (A);
\draw[->] (S) to node {$t$} (B);

\draw[->, dashed] (Df) to node {$\gamma$} (Dg);
\draw[->, dashed] (Df) to node [swap] {$\br_1(f)$} (hX);
\draw[->, very thick, red, dashed] (A) to node {$\varphi_{[f]+\kf}$} (hX);
\draw[->, very thick, red, dashed] (S) to node {$\varphi_{s^*[f]+\k}$} (hX);
\end{tikzpicture}
\caption{See proof of \THM{C}.}
\vspace{-2mm}
\label{fig:r1b}
\end{figure}

Since $s^*[f]=t^*[g]$ by construction,
there exists a projective isomorphism $\hat{\beta}\in\cP(\br_1(f),\br_1(g))$
making the diagram of \FIG{r1c} commutative.

\begin{figure}[!ht]
\centering
\begin{tikzpicture}[node distance=13mm, auto]
\usetikzlibrary{arrows.meta}
\node (C1) {};
\node (C2) [right of=C1] {};
\node (C3) [right of=C2] {};
\node (S) [right of=C3]  {$S$};
\node (C5) [right of=S]  {};
\node (C6) [right of=C5] {};
\node (C7) [right of=C6] {};

\node (A) [below of=C1] {$\bmd f$};;
\node (B) [below of=C7] {$\bmd g$};

\node (X)  [below of=A] {$\img f$};
\node (E2) [right of=X]  {};
\node (hX) [right of=E2] {$\hX$};
\node (E4) [right of=hX]  {};
\node (hY) [right of=E4] {$\hY$};
\node (E6) [right of=hY]  {};
\node (Y)  [right of=E6] {$\img g$};
\node (Df) [below of=E2] {$\dom f$};
\node (Dg) [below of=E6] {$\dom g$};

\draw[->] (A) to node [swap] {$\varphi_{[f]}$} (X);
\draw[->] (B) to node {$\varphi_{[g]}$} (Y);
\draw[->, dashed] (Df) to node {$f$} (X);
\draw[->, dashed] (Dg) to node [swap] {$g$} (Y);

\draw[->] (S) to node [swap] {$s$} (A);
\draw[->] (S) to node {$t$} (B);

\draw[->, dashed] (Df) to node {$\gamma$} (Dg);

\draw[->, dashed] (Df) to node [swap] {$\br_1(f)$} (hX);
\draw[->, dashed] (Dg) to node {$\br_1(g)$} (hY);
\draw[->, dashed] [swap] (S) to node {$\varphi_{s^*[f]+\k}$} (hX);
\draw[->, dashed] (S) to node {$\varphi_{t^*[g]+\k}$} (hY);

\draw[->, very thick, red,] (hX) to node {$\hat{\beta}$} (hY);
\end{tikzpicture}
\caption{See proof of \THM{C}.}
\vspace{-2mm}
\label{fig:r1c}
\end{figure}

It follows that $\bc_1$ is a projective invariant and that $\gamma\in \cR(\br_1(f),\br_1(g))$.
We conclude that the reducer $\br_1$ is compatible as asserted.
\end{proof}

We will now proceed with the proof of \THM{B}.

In the remainder of this section we suppose that
$S$ is a smooth rational surface with canonical class $\k$.

The \df{nef threshold} of a class~$\fh\in N(S)$ is defined as
\[
\tau(\fh):=\operatorname{sup}\set{t\in\R}{\fh+t\,\k \text{ is nef}}.
\]
Notice that $\k$ is not nef as $S$ is a rational surface.
Hence $\fh$ is nef if and only if $\tau(\fh)\geq 0$.
Recall from \DEF{M} that the components of $f\in\cM$
do not have a non-constant greatest common divisor,
and thus $\tau([f])\geq 0$.

\begin{lemma}
\label{lem:t}
If $\fh\in N(S)$ \st $\tau(\fh)\geq 0$ and $\fh^2>0$,
then
$\tau(\fh)\in\Q_{\geq 0}$.
\end{lemma}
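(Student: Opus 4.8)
The plan is to exploit the polyhedrality of the nef cone of a rational surface together with the fact that $\fh$ lies strictly inside the positive cone (since $\fh^2>0$ and $\fh$ is nef, hence in the closure of the ample cone). First I would recall the structure theory: on a smooth rational surface $S$, the effective cone — equivalently, by duality, the nef cone — is a \emph{rational polyhedral} cone generated by finitely many classes of $(-1)$-curves and finitely many other negative curves; this is the content of the cone theorem for surfaces with $-K$ not nef, and is exactly the situation here since $\k$ is not nef. Consequently $\fh$ is nef if and only if $\fh\cdot C\geq 0$ for all curves $C$ in a fixed finite set $\mathcal{C}=\{C_1,\dots,C_N\}$ of extremal negative curves on $S$ (equivalently, for all classes in a finite generating set of the Mori cone $\overline{NE}(S)$, which is dual to the nef cone).

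Next I would translate the nef threshold into this finite test. By definition $\tau(\fh)$ is the supremum of $t\in\R$ with $\fh+t\k$ nef, i.e. with $(\fh+t\k)\cdot C_j\geq 0$ for all $j$. For each $j$ this is a linear inequality in $t$ with integer coefficients: $\fh\cdot C_j + t\,(\k\cdot C_j)\geq 0$. The key observation is that $\fh^2>0$ forces $\fh$ to be a positive class in the sense of the Hodge index theorem, so $\fh\cdot C_j>0$ whenever $C_j$ is an effective class not proportional to $\fh$; and $\k\cdot C_j<0$ for at least one $j$ (again because $\k$ is not nef and in fact anti-ample modulo the negative curves — more concretely, any $(-1)$-curve $E$ has $\k\cdot E=-1<0$, and $\overline{NE}(S)$ contains such a class or else $\tau(\fh)=+\infty$, which is excluded once we check $\fh^2>0$ together with $\fh+t\k$ having negative self-intersection for large $t$). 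Hence the constraint set $\{t:\ \fh+t\k\ \text{nef}\}$ is of the form $\bigcap_j\{t: a_j+tb_j\geq 0\}$ with $a_j,b_j\in\Z$, and since it is bounded above (as $(\fh+t\k)^2 = \fh^2 + 2t\,\fh\cdot\k + t^2\k^2\to-\infty$ or stays non-positive for large $t$, contradicting nef$+$big unless $t$ is bounded), the supremum is attained at $t=-a_j/b_j$ for some $j$ with $b_j<0$. This is a rational number, and it is $\geq 0$ by hypothesis, giving $\tau(\fh)\in\Q_{\geq 0}$.

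The main obstacle, and the step I would spend the most care on, is justifying that the relevant cone is genuinely \emph{finitely generated by rational (indeed integral) classes} in a way that applies to an \emph{arbitrary} smooth rational surface $S$ — the lemma does not assume $S$ is del Pezzo or weak del Pezzo, so $N(S)$ could have large rank and the negative part of $\overline{NE}(S)$ could a priori be complicated. The clean way around this is: one does not need the full Mori cone, only that $\fh+t\k$ nef is equivalent to finitely many linear integral inequalities \emph{in the single variable $t$}. For this it suffices that the ray $\{\fh+t\k : t\in\R\}$ exits the nef cone through a rational supporting hyperplane; and since $\fh^2>0$, the Hodge index theorem guarantees that the boundary of the nef cone near $\fh+\tau(\fh)\k$ is cut out by classes $C$ with $C^2<0$ (a nef class of positive self-intersection can only fail to be ample along walls defined by negative curves), and on a rational surface there are only finitely many classes $C$ with $C^2<0$ and $C\cdot(\fh+t\k)$ changing sign on a bounded $t$-interval — more precisely $C$ ranges over $(-1)$-curves and $(-2)$-curves meeting $\fh$, a finite set by boundedness of the relevant intersection numbers. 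I would phrase the proof to extract exactly this finiteness, cite \citep[Section~V.3]{har} and the Hodge index theorem, and then the rationality is immediate from solving a single integral linear inequality.
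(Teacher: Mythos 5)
Your route is genuinely different from the paper's. The paper's proof is a two-line citation: $\fh$ is nef and big by \citep[Theorem~2.2.16]{laz1}, and the Kawamata rationality theorem \citep[Theorem~1-2-11]{mat} — a vanishing-theorem argument on the Riemann--Roch polynomial $\chi(x\fh+y\k)$ — applies verbatim once Kodaira vanishing is replaced by Kawamata--Viehweg vanishing. You instead try to exhibit $\tau(\fh)$ as the solution of a single integral linear inequality coming from a wall of the nef cone. That strategy can be salvaged, but as written it has two genuine gaps. The first is the opening structural claim: the nef (equivalently, the dual of the effective) cone of an arbitrary smooth rational surface is \emph{not} rational polyhedral in general — blowing up $\P^2$ in nine or more very general points yields infinitely many $(-1)$-curves, so $\overline{NE}(S)$ is not finitely generated; the cone theorem only gives local discreteness of extremal rays in the open half-space $\k\cdot z<0$. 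You flag this yourself, but your repair is where the second gap sits.

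The repair restricts the relevant walls to classes $C$ with $C^2<0$, on the grounds that a nef class of positive self-intersection can only leave the nef cone across a negative curve. This fails exactly when $(\fh+\tau(\fh)\,\k)^2=0$, i.e.\ when the ray exits through the round boundary of the positive cone rather than through a negative-curve wall. Concretely, take $S=\PP$, $\fh=\l_0+\l_1$, $\k=-2\l_0-2\l_1$: then $\fh^2=2>0$, $\tau(\fh)=\tfrac12$, $(\fh+\tfrac12\k)^2=0$, and $S$ carries no curve of negative self-intersection at all — so your finite list of walls is empty and the argument produces nothing. This degenerate case is not exotic for this paper; it is precisely the situation of \LEM{pencil} and base cases B4/B5. (Also, $(-2)$-curves are irrelevant to the threshold, since $\k\cdot C=0$ makes the constraint on $t$ vacuous; the classes you must control are those with $\k\cdot C<0$, which include fiber classes with $C^2=0$.) What actually rescues the cone-theoretic approach is not finiteness of the walls but discreteness of the candidate values: $K$-negative extremal rays are spanned by integral classes $C$ with $0<-\k\cdot C\leq 3$ (cf.\ \citep[Corollary~1-2-15]{mat}), so $\tau(\fh)=\inf_C\,\fh\cdot C/(-\k\cdot C)$ ranges over a subset of $\tfrac16\Z_{\geq0}$, whence the infimum is attained and rational. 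If you go this way, be aware that in the standard development the cone theorem is itself deduced from the rationality theorem, so you must cite a proof of the surface cone theorem that does not presuppose the statement you are proving.
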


\begin{proof}
If $\fh$ is ample, then
the assertion follows from the rationality theorem at \citep[Theorem~1-2-11]{mat}.
The proof of \citep[Theorem~1-2-11]{mat} also works with $\fh$ nef and big
instead of ample by using the
Kawamata-Viehweg vanishing theorem
\citep[Theorem~4.3.1]{laz1} instead of the Kodaira Vanishing theorem.
We know from \citep[Theorem~2.2.16 (bigness of nef divisors)]{laz1} that $\fh$ is nef and big.
\end{proof}

\begin{lemma}
\label{lem:nef}
If $\fb,\fc\in N(S)$ \st $\tau(\fb),\tau(\fc)\geq 0$, then $\fb\cdot \fc\geq 0$.
\end{lemma}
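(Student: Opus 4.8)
The plan is to reduce to the case that both classes are nef and big by a limiting argument, and then invoke the Hodge index theorem. First I would observe that the statement is trivial unless we worry about the ``mixed'' sign situation, so suppose for contradiction that $\fb\cdot\fc<0$. The key idea is that nefness is a closed condition preserved under the operation $\fh\mapsto\fh+t\,\k$ for $t\ge 0$ in the wrong direction only; concretely, for any $\fh$ with $\tau(\fh)\ge 0$ and any ample class $\fa_0$, the perturbation $\fh+\epsilon\,\fa_0$ is ample for every $\epsilon>0$ (since nef $+$ ample $=$ ample), and in particular nef and big. So I would pick a fixed ample class $\fa_0\in N(S)$ and set $\fb_\epsilon:=\fb+\epsilon\,\fa_0$, $\fc_\epsilon:=\fc+\epsilon\,\fa_0$, both ample for $\epsilon>0$.

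Next I would handle the ample case. If $\fb_\epsilon$ and $\fc_\epsilon$ are both ample, then $\fb_\epsilon\cdot\fc_\epsilon>0$ directly from the Nakai--Moishezon criterion (an ample class has positive intersection with every curve, hence with every nef, nonzero effective class, and in particular with another ample class). Alternatively one can argue via the Hodge index theorem: on a surface the intersection form has signature $(1,\rho-1)$, the ample cone lies in one of the two components of the positive cone $\{\fx:\fx^2>0\}$, and any two classes in the closure of the same component of the positive cone pair nonnegatively. Either way, $\fb_\epsilon\cdot\fc_\epsilon\ge 0$ for all $\epsilon>0$. Letting $\epsilon\to 0$ gives $\fb\cdot\fc=\lim_{\epsilon\to 0}\fb_\epsilon\cdot\fc_\epsilon\ge 0$, since the intersection product is a continuous (indeed polynomial) function on $N(S)_\R$. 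This contradicts $\fb\cdot\fc<0$, completing the proof.

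The one place to be careful — and what I expect to be the only real subtlety — is the justification that nef plus ample is ample, i.e.\ that $\fb_\epsilon$ really is ample (not merely nef) for $\epsilon>0$; this is standard (e.g.\ from Kleiman's criterion: the nef cone is the closure of the ample cone, so adding an interior point of the ample cone to a boundary point lands in the interior), but it is the hinge of the whole argument, since for genuinely nef-but-not-ample classes the intersection form could in principle be degenerate on the relevant subspace. Once that is in hand, everything else is the elementary geometry of the positive cone on a surface together with continuity of the intersection form, and no further input about rational surfaces (or about $\k$) is needed. In particular, note that Lemma \ref{lem:t} is not required here; the argument works for arbitrary nef $\fb,\fc$ regardless of whether $\fb^2$ or $\fc^2$ is positive.
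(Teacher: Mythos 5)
Your argument is correct, and it is essentially what the paper intends: the paper's entire ``proof'' of \LEM{nef} is the citation ``See [Lazarsfeld, \emph{Positivity in Algebraic Geometry I}, Example~1.4.16]'', and the perturbation argument you describe (replace $\fb,\fc$ by the ample classes $\fb+\epsilon\,\fa_0$, $\fc+\epsilon\,\fa_0$, note that ample classes pair strictly positively, and let $\epsilon\to 0$ using that the intersection form is polynomial in $\epsilon$) is precisely the standard proof of that cited fact. Your closing observations are also accurate: nothing about rational surfaces, about $\k$, or about $\fb^2,\fc^2>0$ is needed, so \LEM{t} plays no role here.
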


\begin{proof}
See \citep[Example~1.4.16]{laz1}.
\end{proof}

\begin{lemma}
\label{lem:RR}
If $\fc\in N(S)$ \st $\tau(\fc)\geq 0$ and $\fc^2>0$,
then
\[
h^0(\fc+\k)=\tfrac{1}{2}\,\fc\cdot(\fc+\k)+1.
\]
\end{lemma}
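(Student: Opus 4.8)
The claim is a Riemann–Roch computation, so the plan is to invoke the Riemann–Roch theorem on the smooth rational surface $S$ and show that the higher cohomology of $\fc+\k$ vanishes. Recall that on a rational surface the arithmetic genus satisfies $\chi(\mathcal{O}_S)=1$, so Riemann–Roch for a divisor class $D$ reads $\chi(D)=\tfrac{1}{2}D\cdot(D-\k)+1$. Applying this with $D=\fc+\k$ gives
\[
\chi(\fc+\k)=\tfrac{1}{2}(\fc+\k)\cdot\fc+1=\tfrac{1}{2}\,\fc\cdot(\fc+\k)+1,
\]
which is exactly the right-hand side of the asserted formula. Hence it suffices to prove that $h^1(\fc+\k)=h^2(\fc+\k)=0$, so that $h^0(\fc+\k)=\chi(\fc+\k)$.

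The vanishing of these two groups is where the hypotheses $\tau(\fc)\geq 0$ and $\fc^2>0$ enter. As already noted in the proof of \LEM{t}, a class $\fc$ with $\tau(\fc)\geq 0$ and $\fc^2>0$ is nef and big (citing \citep[Theorem~2.2.16]{laz1}). The plan is then to apply the Kawamata–Viehweg vanishing theorem \citep[Theorem~4.3.1]{laz1}: for a nef and big class $\fc$ on a smooth projective surface, $h^i(\k+\fc)=0$ for all $i>0$. This immediately kills both $h^1(\fc+\k)$ and $h^2(\fc+\k)$, and the proof is complete by combining this with the Riemann–Roch identity above.

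The main (and essentially only) obstacle is the bookkeeping of conventions: one must make sure the sign conventions in Riemann–Roch match the paper's definition of $\k$ as the canonical class, and that ``nef and big'' is correctly extracted from ``$\tau(\fc)\geq 0$ and $\fc^2>0$'' — but this last point has already been settled inside the proof of \LEM{t}, so it can simply be quoted. There is no genuine geometric difficulty; the statement is a direct packaging of Riemann–Roch plus Kawamata–Viehweg vanishing, exactly parallel to how \LEM{t} was argued. One should also note that $\fc+\k$ may well have negative intersection against curves (it need not be effective or nef), but that is irrelevant: Kawamata–Viehweg is applied to the nef and big class $\fc$, and the conclusion is about the twist $\k+\fc$.
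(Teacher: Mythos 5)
Your proposal is correct and is essentially the same argument as the paper's (which simply cites nefness and bigness of $\fc$ via \citep[Theorem~2.2.16]{laz1}, then invokes Riemann--Roch and Kawamata--Viehweg vanishing); you have merely written out the Euler-characteristic computation and the vanishing step that the paper leaves implicit.
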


\begin{proof}
We know from \citep[Theorem~2.2.16 (bigness of nef divisors)]{laz1} that $\fc$ is nef and big.
The assertion now follows from the Riemann-Roch theorem and
the Kawamata-Viehweg vanishing theorem
as stated at \citep[Theorem~4.3.1]{laz1}.
\end{proof}

\begin{lemma}
\label{lem:tinZ}
If $\fh\in N(S)$, $\tau(\fh)\geq 0$ and $(\fh+\tau(\fh)\,\k)^2\neq 0$,
then
$\tau(\fh)\in\Z_{\geq 0}$ and
there exists a $(-1)$-curve~$E\subset S$
\st $(\fh+\tau(\fh)\,\k)\cdot [E]=0$.
\end{lemma}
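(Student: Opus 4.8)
The plan is to combine the rationality structure from \LEM{t} with the contraction theory of extremal rays, treating $S$ as a smooth projective rational surface and using the Mori-theoretic language of \citep[Chapter~1]{mat}. Write $\tau:=\tau(\fh)$ and $\fg:=\fh+\tau\,\k$, which is nef by definition of the nef threshold (the supremum is attained because the nef cone is closed). First I would dispose of the case $\fh^2\leq 0$: if $\fh$ were nef with $\fh^2\leq 0$, then for any ample $\fa$ one checks that $\fh-\varepsilon\,\k$ stays nef for small $\varepsilon>0$ is impossible to maintain indefinitely, but more to the point, I would instead argue that $\fg^2\neq 0$ together with $\fg$ nef forces $\fg^2>0$ by \LEM{nef} (take $\fb=\fc=\fg$), hence $\fg$ is nef and big, hence $\fh=\fg-\tau\,\k$ has $\fh^2>0$ whenever $\tau$ is small, and in any case $\tau\in\Q_{\geq 0}$ follows from \LEM{t} once we know $\fh^2>0$; if $\fh^2\leq 0$ one shows $\tau=0$ directly (a nef class with $\fh^2\le 0$ cannot remain nef after subtracting any positive multiple of $\k$, since $\k$ is not nef, so $\tau(\fh)=0$, giving $\fg=\fh$ and $\fg^2=\fh^2\le 0$, and then $\fg^2>0$ from \LEM{nef} forces $\fh^2>0$ after all — so this case is vacuous or reduces to the main one). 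The upshot of this bookkeeping is: $\fg$ is nef with $\fg^2>0$, so $\fg$ is nef and big by \citep[Theorem~2.2.16]{laz1}.

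Next I would use the defining extremality of $\tau$. Because $\fg=\fh+\tau\,\k$ is nef but $\fh+(\tau+\varepsilon)\,\k$ is not nef for any $\varepsilon>0$, the class $\fg$ lies on the boundary of the nef cone in the direction of $-\k$; concretely, there is an irreducible curve $E$ with $\fg\cdot[E]=0$ and $\k\cdot[E]<0$ (otherwise one could still add a small positive multiple of $\k$ to $\fh$ and stay nef). This is exactly the input for the Cone Theorem / contraction theorem of Mori theory: the ray $\R_{\geq 0}[E]$ is a $\k$-negative extremal ray of $\overline{NE}(S)$. On a smooth projective surface, the classification of $\k$-negative extremal rays (see \citep[Chapter~1]{mat}) says each such ray is spanned either by the class of a fiber $F$ of a ruling ($F^2=0$, $\k\cdot F=-2$), or by the class of a line on $S\cong\P^2$ ($F^2=1$, $\k\cdot F=-3$), or by the class of a $(-1)$-curve $E$ ($E^2=-1$, $\k\cdot E=-1$). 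Since $\fg$ is big, the contraction associated to this ray cannot be a birational contraction to a point from $\P^2$ nor a fibration (in the fibration case $\fg$ would be the pullback of an ample class from a curve, forcing $\fg^2=0$, contradicting $\fg^2>0$; in the $\P^2$ case $S\cong\P^2$ and $\fg$ proportional to a multiple of a line gives $\fg\cdot[E]>0$, contradiction). Hence the ray is a $(-1)$-curve $E$ with $\fg\cdot[E]=(\fh+\tau\,\k)\cdot[E]=0$, which is the second assertion.

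Finally, to get $\tau\in\Z_{\geq 0}$ I would intersect the relation $\fg\cdot[E]=0$ with $[E]$: from $(\fh+\tau\,\k)\cdot[E]=0$ and $\k\cdot[E]=-1$ we obtain $\tau=\fh\cdot[E]$, an integer, and it is $\geq 0$ since $\fh$ is nef (as $\tau(\fh)\ge 0$) and $[E]$ is effective. The main obstacle I anticipate is making the extremal-ray argument airtight on a possibly non-minimal rational surface: one must argue that the boundary face of the nef cone on which $\fg$ lies is genuinely $\k$-negative and extremal, rather than merely nef-vanishing on some non-extremal curve class, and that bigness of $\fg$ excludes the fibration and del Pezzo contraction types. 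This is where the hypothesis $(\fh+\tau\fh\,\k)^2\ne 0$ (equivalently $\fg^2>0$ via \LEM{nef}) does the essential work, and I would lean on the rationality theorem \citep[Theorem~1-2-11]{mat} (as already invoked in \LEM{t}) together with the surface case of the contraction theorem to pin down $E$ precisely.
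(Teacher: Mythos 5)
Your proof is correct and follows the same overall skeleton as the paper's: produce an irreducible curve $E$ that computes the nef threshold (so $(\fh+\tau(\fh)\,\k)\cdot[E]=0$ and $\k\cdot[E]<0$), note that $(\fh+\tau(\fh)\,\k)^2>0$ by nefness plus the hypothesis, show $E$ is a $(-1)$-curve, and read off $\tau(\fh)=\fh\cdot[E]\in\Z_{\geq0}$. The one step you do genuinely differently is the identification of $E$ as a $(-1)$-curve: the paper gets this in two lines from the Hodge index theorem (which forces $[E]^2<0$ since $(\fh+\tau(\fh)\,\k)^2>0$ and the product with $[E]$ vanishes) combined with the genus formula ($[E]^2+\k\cdot[E]\geq-2$ with both terms negative forces both to equal $-1$), whereas you invoke the classification of $\k$-negative extremal ray contractions on surfaces and then use bigness of $\fg$ to rule out the fibration and $\P^2$ types. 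Both are valid; the paper's route is more elementary and self-contained given the lemmas already proved, while yours is more conceptual but imports heavier machinery from Mori theory --- note that your exclusion of the fibration case also follows in one line from the Hodge index theorem ($\fg^2>0$ and $\fg\cdot F=0$ with $F^2=0$, $F\not\equiv0$ is impossible), which would let you shortcut that part. Two minor remarks: your opening paragraph's case analysis on the sign of $\fh^2$ is unnecessary and somewhat muddled --- the lemma never needs $\fh^2>0$, and the appeal to \LEM{t} is superfluous since integrality of $\tau$ falls out of $\tau=\fh\cdot[E]$ at the end; and the existence of an actual irreducible curve attaining the threshold (rather than the supremum being approached by infinitely many classes) is asserted with the same brevity in both your argument and the paper's, so you are not less rigorous there.
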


\begin{proof}
There exists a curve $E\subset S$ \st $(\fh+\tau(\fh)\,\k)\cdot [E]=0$ and $\k\cdot [E]<0$.
Indeed $E$ is a curve that determines the nef threshold.
Recall from \LEM{nef} that $(\fh+\tau(\fh)\,\k)^2>0$.
It follows from Hodge index theorem and the genus formula
that $[E]^2=\k\cdot[E]=-1$.
This concludes the proof as $\tau(\fh)=\fh\cdot [E]$.
\end{proof}

\begin{lemma}
\label{lem:tbound}
If $\fh\in N(S)$ and $\fc:=\alpha\,\fh+\beta\,\k$
for some co-prime $\alpha,\beta\in\Z_{>0}$ \st
\[
\tau(\fc-\k)\geq 0,\quad
(\fc-\k)^2>0
\quad\text{and}\quad
h^0(\fc)\leq 1,
\]
then either
$\tau(\fh)<{\beta}/{\alpha}$ or $\fc=0$.
\end{lemma}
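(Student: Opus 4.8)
The plan is to establish the disjunction directly: assume $\tau(\fh)\geq\beta/\alpha$ and show that then necessarily $\fc=0$ (which is equivalent to the stated claim). The crucial first step is to deduce from this assumption that $\fc$ is nef. Since $\alpha,\beta\in\Z_{>0}$ we have $\beta/\alpha>0$, hence $\tau(\fh)>0$, so $\fh$ itself is nef. The set $T:=\{t\in\R:\fh+t\,\k\text{ is nef}\}$ is the intersection of the affine line $t\mapsto\fh+t\,\k$ with the closed convex cone $\operatorname{Nef}(S)\subseteq N(S)\otimes\R$, so $T$ is a closed interval; it contains $0$ because $\fh$ is nef, and it is bounded above because $\tfrac1t\fh+\k\to\k\notin\operatorname{Nef}(S)$ as $t\to+\infty$ while $\operatorname{Nef}(S)$ is closed, so $\sup T=\tau(\fh)$ is finite and attained. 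Consequently $[0,\tau(\fh)]\subseteq T$, and in particular $\fh+\tfrac{\beta}{\alpha}\,\k$ is nef; scaling by $\alpha>0$ gives that $\fc=\alpha\,\fh+\beta\,\k$ is nef.

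With $\fc$ known to be nef, I would run the following short chain. The hypotheses $\tau(\fc-\k)\geq 0$ and $(\fc-\k)^2>0$ say that $\fc-\k$ is nef and big, so \LEM{RR} applied to the class $\fc-\k$ gives
\[
h^0(\fc)=h^0\bigl((\fc-\k)+\k\bigr)=\tfrac12\,(\fc-\k)\cdot\fc+1.
\]
Since $\fc$ and $\fc-\k$ are both nef, \LEM{nef} gives $(\fc-\k)\cdot\fc\geq 0$, whence $h^0(\fc)\geq 1$; together with the hypothesis $h^0(\fc)\leq 1$ this forces $(\fc-\k)\cdot\fc=0$. Finally I would invoke the Hodge index theorem for the class $\fc-\k$, which has positive self-intersection: every class orthogonal to $\fc-\k$ has non-positive self-intersection, and is zero precisely when that self-intersection vanishes. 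Since $\fc$ is orthogonal to $\fc-\k$, we obtain $\fc^2\leq 0$; but nef-ness of $\fc$ gives $\fc^2\geq 0$, so $\fc^2=0$ and hence $\fc=0$ in $N(S)$, as required.

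I expect the main obstacle to be the first step: one must argue carefully that $\tau(\fh)\geq\beta/\alpha$ genuinely implies that $\fc$ is nef, rather than take for granted that decreasing $t$ preserves nef-ness. This uses convexity and closedness of $\operatorname{Nef}(S)$ together with the fact that $\k$ is not nef on the rational surface $S$ — so that $T$ is a genuine interval that is bounded above, $\tau(\fh)$ is finite and attained, and $[0,\tau(\fh)]\subseteq T$. The remaining steps (\LEM{RR}, then \LEM{nef}, then Hodge index) are routine. I would also remark that coprimality of $\alpha$ and $\beta$ is not needed for this lemma; it is presumably used only at the point where the lemma is applied.
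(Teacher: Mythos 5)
Your proof is correct and follows essentially the same route as the paper's: both arguments rest on \LEM{RR} applied to the class $\fc-\k$, the non-negativity of intersections of nef classes (\LEM{nef}), and the Hodge index theorem. You merely arrange the argument contrapositively and spell out in more detail why $\tau(\fh)\geq\beta/\alpha$ forces $\fc$ to be nef, a step the paper leaves implicit; your closing observation that coprimality of $\alpha,\beta$ is not used here is also accurate.
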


\begin{proof}
It follows from \LEM{RR} that $\fc\cdot(\fc-k)\leq 0$.
If $\fc\cdot(\fc-k)=0$, then either $\fc^2<0$ or $\fc=0$ by the Hodge index theorem.
If $\fc^2<0$ or $\fc\cdot(\fc-k)<0$, then $\fc$ is not nef by \LEM{nef}
and thus $\tau(\fh)<{\beta}/{\alpha}$.
\end{proof}

\begin{lemma}
\label{lem:c1a}
If $\fh\in N(S)$ \st
\[
\tau(\fh)> 0,\quad
\fh^2>0
\quad\text{and}\quad
h^0(\fh+\k)\leq 1,
\]
then $(\fh+\tau(\fh)\,\k)^2=0$ and $\tau(\fh)\leq 1$.
\end{lemma}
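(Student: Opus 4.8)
The plan is to derive the statement directly from \LEM{tbound} and \LEM{tinZ}. First I would apply \LEM{tbound} with $\fc := \fh + \k$, i.e.\ with the co-prime positive integers $\alpha = \beta = 1$, so that $\fc - \k = \fh$. Its three hypotheses then read $\tau(\fh) \geq 0$, $\fh^2 > 0$ and $h^0(\fh + \k) \leq 1$, all of which hold by assumption (the first one even strictly). Hence \LEM{tbound} yields the dichotomy: either $\tau(\fh) < 1$ or $\fc = \fh + \k = 0$.

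In the first branch we immediately get $\tau(\fh) \leq 1$, so it remains only to show $(\fh + \tau(\fh)\,\k)^2 = 0$. Here I would argue by contradiction using \LEM{tinZ}: if this square were nonzero, then \LEM{tinZ} (whose hypothesis $\tau(\fh) \geq 0$ is met) would force $\tau(\fh) \in \Z_{\geq 0}$, which is incompatible with $0 < \tau(\fh) < 1$. In the second branch, $\fh = -\k$, and there I would compute $\tau(\fh)$ by hand: for every $t > 1$ the class $\fh + t\,\k = (t-1)\,\k$ is a positive multiple of $\k$ and hence not nef (as $S$ is rational), while $\fh + \k = 0$ is nef; therefore $\tau(\fh) = 1 \leq 1$ and $(\fh + \tau(\fh)\,\k)^2 = 0$. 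So both conclusions hold in each branch.

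I do not anticipate a genuine difficulty: the real work sits in \LEM{tbound} and \LEM{tinZ}. The only points that need a little attention are handling the degenerate case $\fh = -\k$ on its own (since \LEM{tinZ} gives no information when the relevant square vanishes), and exploiting the strict inequality $\tau(\fh) > 0$ — rather than merely $\tau(\fh) \geq 0$ — in order to rule out $\tau(\fh) = 0$ in the final step of the first branch.
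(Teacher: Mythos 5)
Your proof is correct and follows essentially the same route as the paper: \LEM{tbound} with $\alpha=\beta=1$ gives the dichotomy $\tau(\fh)<1$ or $\fh+\k=0$, and \LEM{tinZ} rules out $(\fh+\tau(\fh)\,\k)^2\neq 0$ when $0<\tau(\fh)<1$, while the degenerate branch $\fh=-\k$ is handled directly. The only cosmetic difference is that the paper first invokes \LEM{nef} to get $(\fh+\tau(\fh)\,\k)^2\geq 0$, which your argument does not need since the hypothesis of \LEM{tinZ} is already stated as $\neq 0$.
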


\begin{proof}
It follows from \LEM{tbound} with $\alpha=\beta=1$ that either $\tau(\fh)<1$ or $\fh+\k=0$
so that $\tau(\fh)=1$.
Recall from \LEM{nef} that $(\fh+\tau(\fh)\,\k)^2\geq 0$.
If $(\fh+\tau(\fh)\,\k)^2>0$, then $\tau(\fh)\in\Z_{>0}$
by \LEM{tinZ} so that we arrive at a contradiction.
\end{proof}

\begin{lemma}
\label{lem:fiber}
Suppose that $M,F\in N(S)$ are the classes of the
moving and fixed part of the linear series $|M+F|$
\st
\[
h^0(M+F)>1
\quad\text{and}\quad
M^2=0.
\]
\begin{Mclaims}
\item[\bf a)]
If $M\cdot [A]=[A]^2=0$ for some curve $A\subset S$, then
$[A]=\beta\,M$
for some~$\beta\in\Q_{>0}$.

\item[\bf b)]
$M=\gamma\,[C]$ for some $\gamma\in\Z_{>0}$ and irreducible curve $C\subset S$.

\item[\bf c)]
If $\tau(M+F)\geq 0$, then $F=0$.

\end{Mclaims}
\end{lemma}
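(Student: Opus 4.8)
I would establish part~(b) first and deduce parts~(a) and~(c) from the resulting fibration picture.

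\textbf{Part (b).} First I would show that the linear system $|M|$ is base point free. Since $M^{2}=0$ and $|M|$ has no fixed component, two general members share no common component and meet in $M^{2}=0$ points, hence are disjoint; a base point would lie on both, a contradiction. Moreover the moving part of $|M+F|$ is the complete system $|M|$ (every $D'\in|M|$ gives $D'+F\in|M+F|$), so $h^{0}(M)=h^{0}(M+F)>1$ and the associated morphism $\varphi_{M}\c S\to\P^{h^{0}(M)-1}$ is everywhere defined with $M=\varphi_{M}^{*}\mathcal{O}(1)$; its image is a curve because $M^{2}=0$. Passing to the Stein factorization of $\varphi_{M}$ yields a fibration $\psi\c S\to B$ onto a smooth curve $B$, and $B\cong\P^{1}$ since $B$ is dominated by the rational surface $S$. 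A general fibre $C$ of $\psi$ is smooth and connected, hence irreducible, and $M=\psi^{*}\mathcal{O}_{\P^{1}}(\gamma)$ for some $\gamma\in\Z_{>0}$, so $M=\gamma\,[C]$.

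\textbf{Part (a).} By part~(b) we may take $C$ to be a general fibre of $\psi\c S\to\P^{1}$, so $M\cdot[A]=0$ gives $[C]\cdot[A]=0$. Then $\psi|_{A}$ cannot be dominant, so every component of $A$ lies in a fibre of $\psi$. Writing such a fibre as $\psi^{-1}(b)=\sum_{i}n_{i}A_{i}$ and applying Zariski's lemma (the intersection form on the fibre components is negative semidefinite with radical spanned by the connected fibre class), the hypothesis $[A]^{2}=0$ forces $[A]$ to be a rational multiple of $[\psi^{-1}(b)]=[C]$; the multiple is positive because $A$ is a nonzero effective divisor and $[C]$ pairs positively with any ample class. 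If $A$ is reducible, run the argument fibre by fibre and use that distinct fibres are disjoint. Hence $[A]\in\Q_{>0}\,M$.

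\textbf{Part (c).} Assume $F\ne0$; I will derive a contradiction from $\tau(M+F)\ge0$, that is, from nefness of $M+F$. Keep $\psi$ and $M=\gamma\,[C]$ from part~(b), and recall that $h^{0}(M+F)=h^{0}(M)=\gamma+1$ because $F$ is the fixed part of $|M+F|$. Since $M$ is base point free, hence nef, we have $M\cdot F\ge0$, and nefness of $M+F$ gives $(M+F)\cdot F=M\cdot F+F^{2}\ge0$. The crucial step is to prove $M\cdot F=0$, i.e.\ that $F$ is vertical for $\psi$; for this I would feed the nef hypothesis into Riemann--Roch for $M+F$ (using $h^{2}(M+F)=h^{0}(\k-M-F)=0$, as $\k$ is not effective on a rational surface while $M+F$ is effective and nonzero) to obtain a lower bound for $h^{0}(M+F)$, and compare it with the true value $h^{0}(M)$ to pin down $M\cdot F$. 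Granting $M\cdot F=0$: then $F^{2}\le0$ by the Hodge index theorem (the intersection form is negative semidefinite on $M^{\perp}$ since $M$ is nef with $M^{2}=0$), so $(M+F)\cdot F=F^{2}\ge0$ forces $F^{2}=0$; two orthogonal isotropic classes in $N(S)$, whose intersection form has a single positive eigenvalue by the Hodge index theorem, must be proportional, hence $F=\delta\,M$ with $\delta\in\Q_{>0}$. But then $M+F$ is a strictly larger multiple of the fibre class than $M$, so pulling sections back along $\psi$ (using $\psi_{*}\mathcal{O}_{S}=\mathcal{O}_{\P^{1}}$) gives $h^{0}(M+F)>h^{0}(M)$, contradicting that $F$ is the fixed part of $|M+F|$. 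Therefore $F=0$.

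I expect the verticality step $M\cdot F=0$ in part~(c) to be the real obstacle: ruling out a horizontal (multisection) component of the fixed part cannot be achieved by numerical positivity alone and must genuinely use that $F$ is the \emph{fixed} part of $|M+F|$ (so that $h^{0}(M+F)=h^{0}(M)$), together with a careful Riemann--Roch estimate; parts~(a) and~(b), and the remaining steps of~(c), then follow routinely from the fibration established in part~(b).
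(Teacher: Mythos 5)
Your parts (a) and (b) are correct, though organised the opposite way to the paper. You prove (b) first by a self-contained fibration argument (base-point-freeness of $|M|$ from $M^2=0$ plus Stein factorization of $\varphi_M$) and then deduce (a) via Zariski's lemma; the paper instead cites \citep[Lemma~1-2-10 and Proposition~1-2-16]{mat} for (a) and derives (b) from it by writing $M$ as a sum of irreducible classes. Both are fine, and your route has the advantage of being more explicit.

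The genuine problem is the step in (c) that you yourself flag. You propose to establish $M\cdot F=0$ directly via a Riemann--Roch count, but this does not close. Carrying out the computation with $h^0(M+F)=h^0(M)=\gamma+1$, $h^2(M+F)=0$, and $M\cdot\k=-2\gamma$, and using Riemann--Roch for $F$ to rewrite $\tfrac12(F^2-F\cdot\k)=-h^1(F)$, one arrives at
\[
M\cdot F \;=\; h^1(F)-h^1(M+F),
\]
which is a priori nonnegative but not obviously zero: Kawamata--Viehweg vanishing does not apply because $M+F$ is not big, and $h^1(F)$ can be positive for an effective rigid divisor. So numerical positivity plus naive Riemann--Roch is not enough, exactly as you anticipate.

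The paper sidesteps this by establishing $(M+F)^2=0$ \emph{first}, rather than $M\cdot F=0$: it observes that $\varphi_{\alpha(M+F)}$ has one-dimensional image for every $\alpha$ (its moving part already contracts to the curve $\img\varphi_{\alpha M}$), so $M+F$ is nef but not big, hence $(M+F)^2=0$ by \citep[Theorem~2.2.16]{laz1}. Expanding
\[
0=(M+F)^2=(M+F)\cdot F+M\cdot F+M^2,
\]
with each summand nonnegative (nefness of $M+F$ and of $M$ against the effective class $F$, and $M^2=0$), gives $M\cdot F=F^2=0$ simultaneously. Then part (a) forces $F=\beta M$ for some $\beta>0$, contradicting that $F$ is a fixed part. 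Your steps after ``granting $M\cdot F=0$'' are a correct variant of this endgame; what you are missing is the paper's non-bigness observation, which replaces the direct computation of $M\cdot F$ that you could not complete.
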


\begin{proof}
a) See \citep[Lemma~1-2-10 and Proposition~1-2-16]{mat}.

b)
Let $M=\sum_{i\in I} M_i$, where $M_i$ are classes of irreducible curves.
Since $M^2=0$ and $\tau(M)\geq 0$ we find that $M_i\cdot M_j=0$ for all $i,j\in I$
and thus this assertion is a consequence of a).

c)
Suppose by contradiction that $F\neq 0$.
We know from \LEM{nef} that $(M+F)^2\geq 0$.
Since $M^2=0$, $\img\varphi_{\alpha M}$ is a curve for all $\alpha\in\Z_{>0}$.
The class of the fixed part of $\alpha\,(M+F)=\alpha\,M+\alpha\, F$ is
$\alpha\, F$ and thus $\img\varphi_{\alpha M}\cong\img \varphi_{\alpha\,(M+F)}$.
Hence $\varphi_{\alpha\,(M+F)}$ is not birational so that
$(M+F)^2=0$ by \citep[Theorem~2.2.16 (bigness of nef divisors)]{laz1}.
Since $(M+F)^2=(M+F)\cdot F+M\cdot F+M^2=0$ and $(M+F)\cdot F,M\cdot F,M^2\geq 0$
it follows that $M\cdot F=F^2=0$. We arrived at a contradiction
as a) states that $F$ must be a multiple of~$M$ and thus cannot be a fixed part.
\end{proof}

\begin{lemma}
\label{lem:pencil}
If $\fh\in N(S)$ \st
\[
\tau(\fh)>0,\quad
\fh^2>0,\quad
\bigl(\fh+\tau(\fh)\,\k\bigr)^2=0
\quad\text{and}\quad
\fh+\tau(\fh)\,\k\neq 0,
\]
then
$\fh+\tau(\fh)\,\k=\gamma\,[C]$ for some $\gamma\in\frac{1}{2}\Z_{>0}$
and a rational curve $C\subset S$
\st
\[
\tau(\fh)\in\tfrac{1}{2}\Z_{> 0},\quad
[C]^2=0,\quad
\k\cdot [C]=-2
\quad\text{and}\quad
h^0([C])>1.
\]
\end{lemma}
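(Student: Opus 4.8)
Write $\fh':=\fh+\tau(\fh)\,\k$. The plan is to first observe that $\fh'$ is nef: the set $\{t\in\R:\fh+t\,\k\text{ is nef}\}$ is an interval (convexity of the nef cone) which is closed (the nef cone is closed) and, since $\tau(\fh)>0$, nonempty with maximum $\tau(\fh)$; hence $\fh+\tau(\fh)\,\k$ is nef, so $\tau(\fh')\ge 0$ as well. Also $\tau(\fh)\in\Q_{>0}$ by \LEM{t}, so we may fix $q\in\Z_{>0}$ with $D:=q\,\fh'\in N(S)$.

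The crucial step is to prove $\k\cdot\fh'<0$. Suppose instead $\k\cdot\fh'\ge 0$. Expanding $0=\fh'^2=\fh^2+2\,\tau(\fh)\,(\fh\cdot\k)+\tau(\fh)^2\,\k^2$ and substituting the resulting value of $\fh\cdot\k$ into $\fh'\cdot\k=\fh\cdot\k+\tau(\fh)\,\k^2$ gives $\fh'\cdot\k=\tfrac{1}{2\,\tau(\fh)}\bigl(\tau(\fh)^2\,\k^2-\fh^2\bigr)$, so $\k\cdot\fh'\ge 0$ forces $\tau(\fh)^2\,\k^2\ge\fh^2>0$ and hence $\k^2>0$. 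Then Riemann--Roch gives $\chi(-\k)=\chi(\mathcal{O}_S)+\tfrac12(-\k)\cdot(-2\,\k)=1+\k^2$, while Serre duality and the vanishing of the plurigenera of a rational surface give $h^2(-\k)=h^0(2\,\k)=0$; therefore $h^0(-\k)\ge 1+\k^2>0$, so $-\k$ is effective. Since $\fh'$ is nef this yields $\fh'\cdot(-\k)\ge 0$, i.e. $\k\cdot\fh'\le 0$, hence $\k\cdot\fh'=0$. But then $\fh'$ lies in $\k^{\perp}$, which is negative definite by the Hodge index theorem because $\k^2>0$, and $\fh'^2=0$ forces $\fh'=0$, contradicting the hypothesis. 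Thus $\k\cdot\fh'<0$, and consequently $\k\cdot D<0$.

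Now I would apply \LEM{fiber}. First, if $\k-D$ were effective then $D\cdot(\k-D)\ge 0$ would give $\k\cdot D\ge 0$, a contradiction; so $h^2(D)=h^0(\k-D)=0$ and Riemann--Roch gives $h^0(D)\ge\chi(D)=1-\tfrac12\,\k\cdot D\ge 2$. Write $D=M+F$ with $M,F$ the classes of the moving and fixed part of $|D|$, so $h^0(M)=h^0(D)\ge 2$. Since $D$ is nef and $M,F$ are effective with $D\cdot M+D\cdot F=D^2=0$, both $D\cdot M$ and $D\cdot F$ vanish; then $M^2+M\cdot F=D\cdot M=0$, and since a general member of $|M|$ contains no component of $F$ we get $M\cdot F\ge 0$, while two general members of $|M|$ share no component so $M^2\ge 0$; hence $M^2=0$. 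Since $\tau(M+F)=\tau(D)\ge 0$, part (c) of \LEM{fiber} gives $F=0$, so $D=M$, and part (b) gives $M=\gamma_0\,[C]$ for some $\gamma_0\in\Z_{>0}$ and an irreducible curve $C\subset S$. Hence $\fh'=\gamma\,[C]$ with $\gamma:=\gamma_0/q\in\Q_{>0}$. From $\fh'^2=0$ we get $[C]^2=0$, and from $\k\cdot\fh'<0$ we get $\k\cdot[C]<0$; the genus formula $[C]^2+\k\cdot[C]=2\,p_a(C)-2$ then forces $\k\cdot[C]=-2$ and $p_a(C)=0$, so $C\cong\P^1$ is rational. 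Applying the Riemann--Roch argument above to $[C]$ (nef, $[C]^2=0$, $\k\cdot[C]=-2<0$) yields $h^0([C])\ge 2$.

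For the half-integrality, note that $\fh'\cdot[C]=\gamma\,[C]^2=0$, so $0=\fh\cdot[C]-2\,\tau(\fh)$ and hence $\tau(\fh)=\tfrac12\,\fh\cdot[C]\in\tfrac12\Z_{>0}$. Moreover $|[C]|$ has no fixed part (as $C$ is irreducible and $h^0([C])\ge 2$) and $[C]^2=0$, so it is base-point-free and defines a morphism $S\to\P^1$ having the smooth curve $C\cong\P^1$ as a fibre; the generic fibre is then a smooth genus-zero curve over $\F(\P^1)$, which has a rational point by Tsen's theorem since $\F$ is algebraically closed, giving a curve $A\subset S$ with $A\cdot[C]=1$. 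Then $\gamma=\fh'\cdot A=\fh\cdot A+\tau(\fh)\,\k\cdot A\in\tfrac12\Z_{>0}$, which completes the proof. The main obstacle is the proof that $\k\cdot\fh'<0$: excluding the case $\k\cdot\fh'\ge 0$ requires noticing that it forces $\k^2>0$ and then combining effectivity of $-\k$ with the Hodge index theorem; the passage through \LEM{fiber} and the remaining verifications are routine, with the half-integrality of $\gamma$ being the only other slightly delicate point.
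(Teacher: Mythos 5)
Your proof is correct, and its skeleton coincides with the paper's: use \LEM{t} to write $\tau(\fh)$ as a positive rational, clear denominators to obtain an integral nef class of square zero, show it is effective and moves, invoke \LEM{fiber}b to write it as a multiple of an irreducible curve $C$, and finish with the genus formula and $\tau(\fh)=\tfrac12\,\fh\cdot[C]$. The one place where you genuinely diverge is the key negativity statement: the paper applies the Hodge index theorem directly to conclude $\fh\cdot[C]>0$ \emph{after} producing $C$, and then reads off $\k\cdot[C]<0$ from $\fh\cdot[C]+\tau(\fh)\,\k\cdot[C]=0$; you instead prove $\k\cdot(\fh+\tau(\fh)\,\k)<0$ \emph{up front}, by ruling out $\k\cdot\fh'\ge 0$ through the observation that it forces $\k^2>0$, hence $-\k$ effective by Riemann--Roch, hence $\k\cdot\fh'=0$, hence $\fh'=0$ by Hodge index. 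Your ordering buys something real: it justifies $h^0(D)\ge 2$, which is needed to apply \LEM{fiber}b (the paper only asserts $h^0\ge 1$ there, which does not literally meet the lemma's hypothesis $h^0>1$), and your Tsen's-theorem section argument supplies the half-integrality of $\gamma$, which the paper leaves implicit. Both arguments are valid; yours is longer but closes two small gaps in the paper's exposition.
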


\begin{proof}
Recall from \LEM{t} that $\tau(\fh)={\beta}/{\alpha}$ for some co-prime $\alpha,\beta\in\Z_{>0}$.
Thus $\tau(\alpha\,\fh+\beta\,\k)\geq 0$ and $\tau(\alpha\,\fh+(\beta-1)\,\k)\geq 0$
so that $h^0(\alpha\,\fh+\beta\,\k)\geq 1$ by the Riemann-Roch theorem and \LEM{nef}.
It follows from \LEM{fiber}b that $\alpha\,\fh+\beta\,\k=\gamma'\,[C]$ for
some irreducible curve $C\subset S$ and $\gamma'\in\Z_{>0}$.
If $\fh\cdot [C]=0$, then we arrive at a contradiction with Hodge index theorem
and thus $\fh\cdot [C]>0$.
Notice that $\fh\cdot [C]+\tau(\fh)\,\k\cdot [C]=0$ and thus
$\k\cdot [C]<0$ so that $\k\cdot [C]=-2$ by the genus formula.
Therefore $\tau(\fh)=\frac{1}{2}\,\fh\cdot [C]$
which concludes the proof.
\end{proof}

\begin{lemma}
\label{lem:adjoint}
Suppose that $\fh\in N(S)$ \st
$\tau(\fh)\geq 0$ and $\fh^2>0$.
\begin{Mclaims}
\item[\bf a)]
If $\tau(\fh)=0$, then
there exists a birational morphism $\nu\c S\to S'$ to
a smooth surface~$S'$
\st
\begin{gather*}
\tau(\nu_*\fh)>0,\quad
\img\varphi_{\nu_*\fh}=\img\varphi_\fh,\quad
h^0(\nu_*\fh)=h^0(\fh),\quad
\gcd\nu_*\fh=\gcd\fh,\quad
\\
(\nu_*\fh)^2=\fh^2
\quad\text{and}\quad
\nu_*\fh\cdot\nu_*\k=\fh\cdot\k.
\end{gather*}

\item[\bf b)]
If $\fc:=\fh+\k$ and $h^0(\fc)>1$, then
there exists a birational morphism $\mu$
\st
$\dom\mu=S$, $\img\mu$ is a smooth surface and
\[
\tau(\mu_*\fc)\geq 0
\quad\text{and}\quad
\img\varphi_{\mu_*\fc}=\img\varphi_\fc.
\]
\end{Mclaims}
\end{lemma}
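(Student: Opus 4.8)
The plan is to realise both $\nu$ and $\mu$ as compositions of Castelnuovo contractions of $(-1)$-curves that are numerically trivial against $\fh$, and to control the numerical data by analysing a single such contraction. Throughout I use that a $(-1)$-curve $E$ on a smooth rational surface may be contracted by Castelnuovo's criterion to a smooth rational surface, and that for the resulting morphism $\pi$ one has the projection formula $\pi^*A\cdot B=A\cdot\pi_*B$, $\pi_*\pi^*A=A$, $\pi_*\mathcal O=\mathcal O$ and $\pi_*\k=\k$ (push-forward of the canonical class).

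For part a), the key remark is that the hypotheses $\tau(\fh)=0$ and $\fh^2>0$ put us exactly in the situation of \LEM{tinZ} (with threshold $0$, so $\fh+\tau(\fh)\,\k=\fh$): there is a $(-1)$-curve $E\subset S$ with $\fh\cdot E=0$. I would contract $E$ to obtain $\nu_1\c S\to S_1$ onto a smooth rational surface, and observe that $\fh\cdot E=0$ forces $\fh=\nu_1^*(\nu_{1*}\fh)$. From this single identity everything asserted in a) follows mechanically: $(\nu_{1*}\fh)^2=\fh^2$, $\nu_{1*}\fh\cdot\nu_{1*}\k=\fh\cdot\k$, $h^0(\nu_{1*}\fh)=h^0(\fh)$, $\img\varphi_{\nu_{1*}\fh}=\img\varphi_\fh$, and $\gcd\nu_{1*}\fh=\gcd\fh$ (the last because $\nu_1^*$ carries a standard basis of $N(S_1)$ into a standard basis of $N(S)$ together with $[E]$, and $\fh$ has zero coordinate along $[E]$). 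Moreover $\nu_{1*}\fh$ is again nef (pull a curve on $S_1$ back to its strict transform and use nef-ness of $\fh$) and still has positive square, so the situation on $S_1$ is an instance of the same setup. Since the construction can be continued whenever the current threshold is $0$ (by \LEM{tinZ}) while each contraction strictly lowers the Picard number, the process terminates, and it can only terminate at a surface where the running push-forward of $\fh$ has positive nef threshold. Taking $\nu$ to be the composite contraction settles a).

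For part b) I would first run the construction of a), even when $\tau(\fh)>0$ already (then $\mu$ is the identity): let $\mu\c S\to S'$ contract, one at a time, the $(-1)$-curves orthogonal to the running push-forward of $\fh$, so that $\fh':=\mu_*\fh$ is nef and big with $\tau(\fh')>0$ and $\mu_*\fc=\fh'+\k_{S'}$. At each stage the contracted curve $E$ satisfies $\fc\cdot E=\fh\cdot E+\k\cdot E=0-1=-1$, hence $E$ is a fixed component of $|\fc|$; removing it changes neither $h^0$ nor the associated rational map, so by induction $h^0(\mu_*\fc)=h^0(\fc)>1$ and $\img\varphi_{\mu_*\fc}=\img\varphi_\fc$. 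It remains to prove that $\mu_*\fc=\fh'+\k_{S'}$ is nef, i.e. $\tau(\fh')\geq 1$. Suppose not, so $0<\tau(\fh')<1$. If $\fh'+\tau(\fh')\k_{S'}=0$, then $\fh'+\k_{S'}$ is a negative rational multiple of the nef and big class $\fh'$, so $h^0(\fh'+\k_{S'})=0$, a contradiction. Otherwise $\fh'+\tau(\fh')\k_{S'}$ is a nonzero nef class, so its square is $\geq 0$ by \LEM{nef}, and since $\tau(\fh')$ is not an integer \LEM{tinZ} forces that square to be $0$. Then \LEM{pencil} applies and yields $\tau(\fh')=\tfrac{1}{2}$ together with an irreducible rational curve $C\subset S'$ with $[C]^2=0$, $\k_{S'}\cdot[C]=-2$ and $\fh'+\tfrac{1}{2}\k_{S'}=\gamma\,[C]$; but then $(\fh'+\k_{S'})\cdot[C]=\gamma\,[C]^2+\tfrac{1}{2}\,\k_{S'}\cdot[C]=-1<0$ with $[C]^2=0$, which forces $h^0(\fh'+\k_{S'})=0$ (an effective member would have to contain $C$, yet removing $C$ leaves the intersection with $[C]$ negative), contradicting $h^0(\mu_*\fc)>1$. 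Hence $\fh'+\k_{S'}$ is nef, that is $\tau(\mu_*\fc)\geq 0$, and b) follows.

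The projection-formula bookkeeping in a) and the verification that $\fh=\nu^*\nu_*\fh$ transports all the invariants are routine. The step I expect to demand the most care is the nef-ness of $\mu_*\fc$ in b): it is precisely there that the hypothesis $h^0(\fc)>1$ enters, and it must be fed into the dichotomy produced by \LEM{tinZ} and \LEM{pencil} correctly, with the degenerate case $\fh'+\tau(\fh')\k_{S'}=0$ handled separately. A secondary point to get right is the termination in a) --- that one really reaches a surface with positive nef threshold rather than merely one on which the recipe stalls.
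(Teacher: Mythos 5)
Your part a) is the paper's own argument: invoke \LEM{tinZ} at threshold zero to find a $(-1)$-curve orthogonal to $\fh$, contract it by Castelnuovo, note $\fh=\nu_1^*\nu_{1*}\fh$ so all invariants transport, and iterate with termination by Picard rank. Part b), however, takes a genuinely different route, and it is correct. The paper proves b) by showing directly that \emph{every} irreducible curve $E$ with $\fc\cdot[E]<0$ is a $(-1)$-curve: since $h^0(\fc)>0$ such an $E$ is a fixed component, hence $h^0([E])=1$, and then Riemann--Roch plus Serre duality give $[E]^2-[E]\cdot\k\leq 0$, which combined with $\fh\cdot[E]\geq 0$ (so $\k\cdot[E]<0$) and the genus formula forces $[E]^2=\k\cdot[E]=-1$; one then contracts these curves until the pushforward of $\fc$ is nef. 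You instead reuse the contraction of a) (contracting $(-1)$-curves orthogonal to the running pushforward of $\fh$, each of which is a fixed component of $|\fc|$ since $\fc\cdot[E]=-1$) and then establish nefness of $\fh'+\k_{S'}$ by contradiction: if $0<\tau(\fh')<1$, either $\fh'+\tau(\fh')\k_{S'}=0$ and $\fh'+\k_{S'}$ is a negative multiple of a nef and big class, or \LEM{tinZ} forces $(\fh'+\tau(\fh')\k_{S'})^2=0$ and \LEM{pencil} gives $\tau(\fh')=\tfrac12$ with $(\fh'+\k_{S'})\cdot[C]=-1$, $[C]^2=0$; in both cases $h^0(\fh'+\k_{S'})=0$, contradicting $h^0(\mu_*\fc)=h^0(\fc)>1$. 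What the paper's route buys is a self-contained identification of the obstructing curves (independent of \LEM{pencil}); what yours buys is that the same morphism serves for a) and b) and the hypothesis $h^0(\fc)>1$ does all the work through the already-proven numerical lemmas. The only soft spot is the $\gcd$ bookkeeping in a), which implicitly assumes the standard basis of $N(S)$ is adapted to the contracted curve; the paper waves at this too, so it is not a gap worth pressing.
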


\begin{proof}
a)
It follows from \LEM{tinZ} that there exists
a $(-1)$-curve $E\subset S$ \st $\fh\cdot [E]=0$.
By Castelnuovo's contraction theorem there exists a birational
morphism $\nu_1\c S\to S_1$ that contracts $E$ to a smooth point.
If $\tau(\nu_{1*}\fh)>0$, then $\nu:=\nu_1$
and the remaining assertions are a straightforward consequence
of basic intersection theory (see for example \citep[Section~V.3]{har}).
If $\tau(\nu_{1*}\fh)=0$, then we repeat the same argument
for $\nu_{1*}\fh$ and contract the resulting $(-1)$-curve.
Since $\rnk N(S_1)<\rnk N(S)<\infty$ there will be a finite number
of contractions
and
we set $\nu$ equal to the composition of these contractions.

b)
Suppose that $E\subset S$ is an irreducible curve \st $\fc\cdot [E]<0$.
This implies that $h^0([E])=1$.
We have $\fh\cdot [E]\geq 0$ and thus $\k\cdot [E]<0$.
By the Riemann-Roch formula and Serre duality we have $[E]^2-[E]\cdot\k\leq 0$ and thus $[E]^2<0$.
Therefore $E$ is a $(-1)$-curve by the genus formula.
We now apply Castelnuovo's contraction theorem as in a)
and define $\mu$ as the compositions of
contractions of $(-1)$-curves that are negative against pushforwards of $\fc$.
\end{proof}

We call $(S,\fh)$ a \df{reduction pair} for $f\in\cM$
if the following four properties are satisfied:
\begin{enumerate}[topsep=0pt]
\item $\tau(\fh)>0$ and $\fh^2>0$,
\item $\img \varphi_\fh=\img \Psi_{[f]}$ (see \DEF{map2}),
\item $\bigl(h^0(\fh),~\fh^2,~\gcd\fh\bigr)=\bigl(h^0([f]),~[f]^2,~\gcd[f]\bigr)$,
\item $h^0(\fh+\k)=h^0([f]+\kf)$.
\end{enumerate}

\begin{lemma}
\label{lem:pair}
There exists a reduction pair $(S,\fh)$ for all $f\in\cM$ \st $[f]^2>0$.
\end{lemma}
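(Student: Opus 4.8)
The plan is to take $(S,\fh):=(\bmd f,[f])$ as a first candidate and to correct it only if it fails. For this candidate three of the four defining conditions of a reduction pair hold automatically. Indeed, $\varphi_{[f]}$ on $\bmd f$ agrees, up to a choice of basis, with the morphism $\Psi_{[f]}\circ\pi$ resolving the base locus of $f$, so $\img\varphi_{[f]}=\img\Psi_{[f]}$ and condition~2 holds; the invariants $\bigl(h^0(\cdot),(\cdot)^2,\gcd(\cdot)\bigr)$ attached by \DEF{nota} to $[f]\in N(\bmd f)$ are by definition $\bigl(h^0([f]),[f]^2,\gcd[f]\bigr)$, so condition~3 holds; and $\kf$ is by definition the canonical class of $\bmd f$, so condition~4 reads $h^0([f]+\kf)=h^0([f]+\kf)$ and holds trivially. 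In condition~1 we have $[f]^2>0$ by hypothesis, and $\tau([f])\geq 0$ by \DEF{M} because the components of $f$ have constant greatest common divisor; the only thing that can fail is the strict inequality $\tau([f])>0$.

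If $\tau([f])>0$ we are done with this candidate, so suppose $\tau([f])=0$. Then $[f]$ is nef with $[f]^2>0$, and I would apply \LEM{adjoint}a to $\fh=[f]$ on $S=\bmd f$ to obtain a birational morphism $\nu\c\bmd f\to S'$ onto a smooth (hence rational) surface with
\[
\tau(\nu_*[f])>0,\quad \img\varphi_{\nu_*[f]}=\img\varphi_{[f]},\quad h^0(\nu_*[f])=h^0([f]),\quad \gcd\nu_*[f]=\gcd[f],
\]
together with $(\nu_*[f])^2=[f]^2$ and $\nu_*[f]\cdot\nu_*\kf=[f]\cdot\kf$. I then set $(S,\fh):=(S',\nu_*[f])$. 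Conditions~1 and~3 are immediate from this list, and condition~2 follows by combining $\img\varphi_{\nu_*[f]}=\img\varphi_{[f]}$ with the identity $\img\varphi_{[f]}=\img\Psi_{[f]}$ noted above.

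The delicate point — and the step I expect to require the most care — is condition~4, $h^0(\fh+\k_{S'})=h^0([f]+\kf)$, since $h^0(\nu_*[f]+\k_{S'})$ is not among the quantities that \LEM{adjoint}a preserves verbatim. To handle it I would use that a birational morphism of smooth surfaces carries the canonical class to the canonical class (\citep[Proposition~V.3.3]{har}), so $\k_{S'}=\nu_*\kf$. Both $[f]$ on $\bmd f$ and $\nu_*[f]$ on $S'$ are nef with positive self-intersection, so \LEM{RR} applies on both surfaces and yields
\[
h^0(\nu_*[f]+\k_{S'})=\tfrac12\bigl((\nu_*[f])^2+\nu_*[f]\cdot\k_{S'}\bigr)+1=\tfrac12\bigl([f]^2+[f]\cdot\kf\bigr)+1=h^0([f]+\kf),
\]
where the middle equality uses $(\nu_*[f])^2=[f]^2$ and $\nu_*[f]\cdot\k_{S'}=\nu_*[f]\cdot\nu_*\kf=[f]\cdot\kf$; this is presumably why the intersection identity $\nu_*\fh\cdot\nu_*\k=\fh\cdot\k$ was built into \LEM{adjoint}a. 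Thus $(S',\nu_*[f])$ is a reduction pair when $\tau([f])=0$, while $(\bmd f,[f])$ is one when $\tau([f])>0$, and in either case a reduction pair for $f$ exists.
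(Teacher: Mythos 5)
Your proposal is correct and follows essentially the same route as the paper: start with $(\bmd f,[f])$, and if $\tau([f])=0$ pass to $(S',\nu_*[f])$ via \LEM{adjoint}a, then invoke \LEM{RR} together with the preserved intersection numbers to transfer $h^0([f]+\kf)$. Your write-up merely makes explicit the Riemann--Roch computation that the paper's proof leaves implicit in the phrase ``it follows from \LEM{RR}''.
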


\begin{proof}
Let $(S,\fh):=(\bmd f,[f])$.
If $\tau(\fh)>0$, then $(S,\fh)$ is a reduction pair for~$f$
as a direct consequence of the definitions.
Now suppose that $\tau(\fh)=0$
and
let $\nu\c S\to S'$ be defined as in \LEM{adjoint}a.
Since $\nu_*\k$ is the canonical class of~$S'$,
it follows from \LEM{RR} that $h^0(\nu_*\fh+\nu_*\k)=h^0(\fh+\k)$
so that $(S',\nu_*\fh)$ is a reduction pair for~$f$.
\end{proof}

\begin{lemma}
\label{lem:B45}
If $(S,\fh)$ is a reduction pair for $f\in\cM$
\st
\[
(\fh+\tau(\fh)\,\k)^2=0,\quad
\tau(\fh)\leq 1
\quad\text{and}\quad
\fh+\tau(\fh)\,\k\neq 0,
\]
then $f$ is characterized by either base case~B4 or B5.
\end{lemma}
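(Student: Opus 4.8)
The plan is to feed the class $\fh$ into \LEM{pencil} and then to split the argument according to the two possible values of $\tau(\fh)$. Set $\tau:=\tau(\fh)$ and $\fc:=\fh+\tau\,\k$. The defining properties of a reduction pair give $\tau>0$ and $\fh^2>0$, and the hypotheses of the lemma give $\fc^2=0$ and $\fc\neq 0$, so \LEM{pencil} applies: $\fc=\gamma\,[C]$ for some $\gamma\in\tfrac12\Z_{>0}$ and an irreducible curve $C\subset S$ with $[C]^2=0$, $\k\cdot[C]=-2$ and $h^0([C])>1$, and moreover $\tau\in\tfrac12\Z_{>0}$. Combined with the hypothesis $\tau\le 1$, only the values $\tau=\tfrac12$ and $\tau=1$ remain, which I expect to correspond to base cases B4 and B5 respectively. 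By the genus formula $C$ has arithmetic genus $1+\tfrac12([C]^2+\k\cdot[C])=0$, and by \LEM{fiber} the class $[C]$ has no fixed part; hence the general member of $|[C]|$ is an irreducible smooth rational curve in the class $[C]$ and $|[C]|$ defines (after resolving, if necessary) a fibration $S\to\P^1$ whose general fibre is such a curve.

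Next I would transport this fibration to $\img f$. Recalling that $\img\varphi_\fh=\img\Psi_{[f]}$ by definition of a reduction pair and that $\img\Psi_{[f]}=\img\br_0(f)$ admits a birational linear projection $\rho_f$ onto $\img f$, the images $\rho_f(\varphi_\fh(C))$ of the general fibres cover $\img f$. A general fibre $C$ maps under $\varphi_\fh$ to an irreducible curve of degree $(\fh\cdot[C])/e=2\tau/e$, where $e\ge 1$ is the degree of $\varphi_\fh|_C$, and for a general fibre the projection $\rho_f$ neither contracts this curve nor lowers its degree. Hence if $\tau=\tfrac12$ the covering curves are lines, and if $\tau=1$ they are lines or conics, which is exactly the geometric requirement in B4 (for $\tau=\tfrac12$) and in B5 (for $\tau=1$).

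It remains to verify the numerical conditions, and for this I would transfer everything from $(S,\fh)$ to $(\bmd f,[f])$ along the birational morphism $\nu\c\bmd f\to S$ produced in the proof of \LEM{pair}. Since $[f]$ is orthogonal to the $(-1)$-curves contracted by $\nu$, negative definiteness of the exceptional lattice gives $\nu^*\fh=[f]$, while $\nu^*\k=\kf-\fa$ with $\fa$ effective and exceptional; hence $k\,[f]+\kf=\nu^*(k\,\fh+\k)+\fa$ for every $k\ge 1$, so $h^0(k\,[f]+\kf)=h^0(k\,\fh+\k)$ and $\mc{k\,[f]+\kf}^2=\mc{k\,\fh+\k}^2$ (pullback preserves intersection numbers and the extra exceptional part is fixed). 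If $\tau=1$, then $\fh+\k=\gamma\,[C]$, a Riemann--Roch computation with $h^2(\fh+\k)=h^0(-\fh)=0$ gives $h^0(\fh+\k)=\gamma+1+h^1(\fh+\k)$, forcing $\gamma\in\Z_{>0}$ and $h^0(\fh+\k)\ge 2$; writing $\fh+\k=M+F$ with $M$ the moving part, nefness of $\gamma\,[C]$ together with $(\gamma\,[C])^2=0$ forces $M^2=M\cdot F=0$. After transfer this yields $h^0([f]+\kf)\ge 2$, $\mc{[f]+\kf}^2=0$ and $([f]+\kf)\cdot\mc{[f]+\kf}=0$, which with $[f]^2=\fh^2>0$ gives the numerical part of B5 and also $\bc_1(f)=0$. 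If $\tau=\tfrac12$, then $2\,\fh+\k=2\gamma\,[C]$ with $2\gamma\in\Z_{>0}$, so the same argument gives $h^0(2\,[f]+\kf)\ge 2$ and $\mc{2\,[f]+\kf}^2=0$, the numerical part of B4; moreover $(\fh+\k)\cdot[C]=\gamma\,[C]^2+\tfrac12\,\k\cdot[C]=-1<0$ with $C$ irreducible and $[C]^2=0$ forces $h^0(\fh+\k)=h^0([f]+\kf)=0$, so again $\bc_1(f)=0$. As this lemma is applied inside the proof of \THM{B}, where $\bc_0(f)=\bc_1(f)=\bc_2(f)=0$ is already assumed, we conclude that $f$ is characterized by base case B4 when $\tau=\tfrac12$ and by B5 when $\tau=1$.

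The step I expect to be the main obstacle is the bookkeeping in the third paragraph: identifying the moving parts of $|k\,\fh+\k|$ on $S$ and of $|k\,[f]+\kf|$ on $\bmd f$ (which agree only after pullback along $\nu$ and deletion of the exceptional fixed components), and keeping the half-integrality of $\gamma$ and $\tau$ straight so that precisely $[f]+\kf$ enters the statement of B5 and precisely $2\,[f]+\kf$ enters the statement of B4. A smaller technical point is making rigorous the claim that a general fibre of $S\to\P^1$ maps onto a line or a conic that sweeps out $\img f$, in particular controlling the degree $e$ of $\varphi_\fh$ along such a fibre and the behaviour of $\rho_f$ there.
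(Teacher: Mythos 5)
Your proposal is correct and follows essentially the same route as the paper: apply \LEM{pencil} to get $\fh+\tau(\fh)\,\k=\gamma\,[C]$ with $\tau(\fh)\in\{\tfrac12,1\}$, compute $\fh\cdot[C]=2\tau(\fh)\in\{1,2\}$, and conclude that the image is covered by lines (B4) or by conics/lines (B5). The paper's proof stops there and leaves the numerical conditions of B4/B5 implicit, whereas your third paragraph (transferring $h^0$ and the moving parts back to $\bmd f$ along $\nu$) supplies that bookkeeping explicitly; it is a legitimate and welcome addition, not a divergence in method.
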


\begin{proof}
We know from \LEM{pencil}
that $\tau(\fh)\in\{\frac{1}{2},1\}$ and $\fh+\tau(\fh)\,\k=\gamma\,[C]$
for some rational curve $C\subset S$ \st $h^0([C])>1$ and $\k\cdot [C]=-2$.
It follows that $1\leq \fh\cdot [C]\leq 2$.
If $\fh\cdot [C]=1$, then $\varphi_\fh(S)$ is
covered by lines so that $f$ is characterized by base case~B4.
If $\fh\cdot [C]=2$, then $C$ is mapped by $\varphi_\fh$ either 2:1 to a line or 1:1 to a conic.
Hence $f$ is in this case characterized by base case~B5.
\end{proof}

\begin{lemma}
\label{lem:B123}
If $(S,\fh)$ is a reduction pair for $f\in\cM$
\st
\[
\fh+\tau(\fh)\,\k=0 \quad\text{and}\quad
\gcd\fh=1,
\]
then $f$ is characterized by base case either
B1, B2 or B3.
\end{lemma}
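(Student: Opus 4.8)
The plan is to recognise $S$ as a smooth weak del Pezzo surface and then to read off B1, B2 or B3 from its degree.

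Since $\fh+\tau(\fh)\,\k=0$ with $\tau(\fh)>0$, the anticanonical class $-\k=\tau(\fh)^{-1}\fh$ is a positive rational multiple of the nef class $\fh$, hence nef, and $(-\k)^2=\tau(\fh)^{-2}\fh^2>0$; thus $-\k$ is nef and big (by \citep[Theorem~2.2.16]{laz1}) and $S$ is a smooth weak del Pezzo surface of some degree $d:=\k^2$ with $1\leq d\leq9$. First I would use $\gcd\fh=1$ to pin down $\fh$ and $\tau(\fh)$: by \LEM{t} write $\tau(\fh)=\beta/\alpha$ in lowest terms, and, writing $\k=\delta\,\k'$ with $\k'$ primitive in $N(S)$, the identity $\alpha\,\fh=-\beta\delta\,\k'$ forces $\fh=-\k'$ and $\alpha=\beta\delta$ (as $\fh$ and $\k'$ are primitive), whence $\beta=1$ and $\tau(\fh)=1/\delta$; so $\fh$ is the primitive anticanonical class and $\fh^2=d/\delta^2$. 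I would then invoke the classification of smooth weak del Pezzo surfaces \cite{dol} to get the dichotomy $\delta\in\{1,2,3\}$: $\delta=3$ iff $S\cong\P^2$ ($d=9$); $\delta=2$ iff $S$ is one of the two degree-$8$ weak del Pezzo surfaces, namely $\PP$ or the minimal resolution of a quadric cone; and $\delta=1$ for every other weak del Pezzo surface, so that $1\leq d\leq8$.

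If $\delta=3$, then $\fh$ is a line class on $\P^2$, so $\fh^2=1$, $h^0(\fh)=3$ and $\img\varphi_\fh\cong\P^2$; the reduction-pair axioms then give $h^0([f])=3$, $[f]^2=1$ and $\img\Psi_{[f]}\cong\P^2$, which is base case B1. If $\delta=2$, then $\fh=-\k'$ with $\fh^2=2$ and $h^0(\fh)=4$, and $\varphi_\fh$ either embeds $\PP$ as a smooth quadric in $\P^3$ or contracts the unique $(-2)$-curve and embeds the rest as a quadric cone in $\P^3$; in either case $\img\varphi_\fh$ is a quadric surface, and the reduction-pair axioms give $h^0([f])=4$, $[f]^2=2$ and $\img\Psi_{[f]}$ a quadric, which is base case B2. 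If $\delta=1$, then $\fh=-\k$, $\tau(\fh)=1$ and $d=\fh^2=[f]^2\in\{1,\dots,8\}$; applying \LEM{RR} to the nef and big class $-2\,\k$ gives
\[
h^0(\fh)=h^0(-\k)=\tfrac{1}{2}\,(-2\,\k)\cdot(-\k)+1=\k^2+1=\fh^2+1,
\]
so $h^0([f])=[f]^2+1$. With $\alpha:=\max(4-d,1)$ it then remains to analyse $\img\Psi_{\alpha[f]}$: I would identify it with $\img\varphi_{\alpha\fh}=\img\varphi_{-\alpha\k}$ and quote the classical description of the $\alpha$-anticanonical map of a degree-$d$ weak del Pezzo surface. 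For $d\geq3$ one has $\alpha=1$ and $\varphi_{-\k}$ is a birational morphism onto the anticanonical model, a del Pezzo surface of degree $d$ in $\P^d$ (with at worst ADE singularities, the $(-2)$-curves being contracted) covered by conics; for $d=2$ one has $\alpha=2$ and $\varphi_{-2\k}$ is a birational morphism onto a del Pezzo surface of degree $2$ in $\P^6$; and for $d=1$ one has $\alpha=3$ and $\varphi_{-3\k}$ is a birational morphism onto a del Pezzo surface of degree $1$ in $\P^6$. In every subcase $\Psi_{\alpha[f]}$ is birational and $\img\Psi_{\alpha[f]}$ is a del Pezzo surface, so $f$ is characterised by base case B3.

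The step I expect to be the main obstacle is the identification $\img\Psi_{\alpha[f]}=\img\varphi_{-\alpha\k}$ for the higher multiples $\alpha\in\{2,3\}$, since the reduction-pair axioms only control $\alpha=1$ (through $\img\varphi_\fh=\img\Psi_{[f]}$). To handle this I would go back to the construction behind \LEM{pair}: there is a birational morphism $\mu\c\bmd f\to S$ whose contracted $(-1)$-curves are orthogonal to $[f]$, so $[f]=\mu^{*}\fh$ and hence $H^0(\bmd f,\alpha[f])\cong H^0(S,\alpha\fh)$ with corresponding moving and fixed parts, which yields $\img\Psi_{\alpha[f]}=\img\varphi_{\alpha\fh}$ for all $\alpha\geq1$. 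The remaining ingredients — the index dichotomy $\delta\in\{1,2,3\}$, the Riemann--Roch computations of the various $h^0$'s, and the birationality and conic-covering assertions for the (multi-)anticanonical models — amount to a routine pass through the finite list of del Pezzo degrees $1\leq d\leq8$ and can be cited from standard references. Finally one records that $\bc_1(f)=\bc_2(f)=0$: indeed $\gcd[f]=\gcd\fh=1$, and $h^0([f]+\kf)=h^0(\fh+\k)=h^0\!\big((\delta-1)\,\k'\big)\leq1$, the last class having no non-zero sections when $\delta\geq2$ because it meets the nef and big class $-\k$ negatively.
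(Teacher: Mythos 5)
Your proof takes essentially the same high-level route as the paper: observe that $-\k$ is nef so $S$ is a smooth weak del Pezzo surface, pin down $\tau(\fh)\in\{\frac13,\frac12,1\}$ using $\gcd\fh=1$, and then read off base cases B1, B2, B3 from the three values. The one place you diverge is the middle step: the paper simply invokes the ``boundedness of denominator'' result \citep[Corollary 1-2-15]{mat} to force $\tau(\fh)\in\{\frac13,\frac12,1\}$, while you derive the same restriction by writing $\k=\delta\,\k'$ with $\k'$ primitive, showing $\fh=-\k'$ and $\tau(\fh)=1/\delta$, and then quoting the classification of weak del Pezzo surfaces by index (which indeed gives $\delta\le 3$, with $\delta=3$ exactly for $\P^2$ and $\delta=2$ exactly for $\PP$ and $\F_2$). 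Both routes are sound; your index computation is somewhat more self-contained but relies on a piece of the del Pezzo classification rather than a general nef-threshold bound. One small slip: there are three smooth weak del Pezzo surfaces of degree $8$ ($\PP$, $\F_1$ and $\F_2$), not two --- but since only $\PP$ and $\F_2$ have index $2$ (and $\F_1$ has index $1$), your dichotomy by index is still correct. You also supply a detail the paper glosses over, namely the identification $\img\Psi_{\alpha[f]}=\img\varphi_{\alpha\fh}$ for $\alpha>1$ via $[f]=\nu^*\fh$ and the projection formula; the paper just cites Reider's theorem for the birationality claims in B3. Finally, the paper's proof (like yours) does not explicitly verify $\bc_0(f)=0$; this is because the lemma is applied inside \THM{B}, where $\bc_0(f)=\bc_1(f)=\bc_2(f)=0$ is already part of the hypothesis, so your remark about $\bc_1,\bc_2$ is fine but, strictly speaking, redundant in context.
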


\begin{proof}
By assumption $\fh=-\tau(\fh)\,\k$ and thus $-\k$ is nef
so that $S$ is a weak del Pezzo surface \cite[Definition~8.1.18]{dp}.
It follows from \cite[Theorem~8.1.15, Lemma~8.3.1, Theorem~8.3.2]{dp}
that $S$ is a weak del Pezzo surface of degree~$1\leq \fh^2\leq 8$ \st $h^0(-\k)=\k^2+1$.
Since $\gcd\fh=1$ we find that $\tau(\fh)\in\{\frac{1}{3},\frac{1}{2},1\}$ by
\citep[Corollary 1-2-15 (boundedness of denominator)]{mat}.
The assertions about birationality are a consequence of Reider's theorem.
Therefore $f$ is characterized by base case B1, B2 or B3
if $\tau(\fh)$ is equal to $\frac{1}{3}$, $\frac{1}{2}$ and $1$, \resp.
\end{proof}

\begin{lemma}
\label{lem:B5direct}
If $f\in\cM$ and $c:=[f]+\kf$ \st
\[
h^0(c)>1
\quad\text{and}\quad
[f]^2>\mc{c}^2=c\cdot\mc{c}=0,
\]
then $f$ is characterized by base case~B5.
\end{lemma}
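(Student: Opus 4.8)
First I would observe that the two hypotheses are, verbatim, the first two defining conditions of base case~B5: $h^0(c)>1$ is $h^0([f]+\kf)\geq 2$, and $\mc{c}^2=0$ is $\mc{[f]+\kf}^2=0$. Since a surface covered by lines is in particular covered by conics or lines, and since $\bc_0(f)=\bc_1(f)=\bc_2(f)=0$ is part of the assertion that $f$ is characterized by base case~B4 or~B5, it follows that once we know $f$ is characterized by base case~B4 or~B5 we may conclude that it is characterized by base case~B5. So the plan is to produce a reduction pair for $f$ and verify the hypotheses of \LEM{B45}.

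Since $[f]^2>0$, \LEM{pair} supplies a reduction pair $(S,\fh)$ for $f$; write $\k$ for the canonical class of $S$, so that $\tau(\fh)>0$, $\fh$ is nef and big (as $\fh^2=[f]^2>0$), and $h^0(\fh+\k)=h^0([f]+\kf)>1$. Let $M$ and $F$ denote the classes of the moving and fixed parts of $|\fh+\k|$, computed on a model on which its base locus has been resolved, so that (after transporting the geometry of $\Psi_c$ to this model) the remaining two hypotheses become $M^2=\mc{c}^2=0$ and $(\fh+\k)\cdot M=c\cdot\mc{c}=0$. Because $h^0(M+F)=h^0(\fh+\k)>1$ and $M^2=0$, \LEM{fiber}b yields $M=\gamma\,[C]$ for some $\gamma\in\Z_{>0}$ and an irreducible curve $C$ with $[C]^2=0$.

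Next I would pin down the intersection numbers of $[C]$. From $(\fh+\k)\cdot M=0$ and the nefness of $\fh$ we get $\k\cdot[C]=-\,\fh\cdot[C]\leq 0$; the value $\k\cdot[C]=0$ is impossible, since it would force $\fh\cdot[C]=0$ and then, as $\fh^2>0$, the Hodge index theorem would force $[C]^2<0$, contrary to $[C]^2=0$. Hence the genus formula gives $\k\cdot[C]=-2$ and $\fh\cdot[C]=2$. Pairing the nef class $\fh+\tau(\fh)\,\k$ with $C$ now gives $0\leq(\fh+\tau(\fh)\,\k)\cdot[C]=2-2\,\tau(\fh)$, so $\tau(\fh)\leq 1$; and $\fh+\tau(\fh)\,\k\neq 0$, for otherwise that pairing would force $\tau(\fh)=1$, hence $\fh+\k=0$, contradicting $h^0(\fh+\k)>1$. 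Finally $(\fh+\tau(\fh)\,\k)^2\geq 0$ by \LEM{nef}; were it positive, \LEM{tinZ} would give $\tau(\fh)\in\Z$, hence $\tau(\fh)=1$, and then $\fh+\k=\gamma\,[C]+F$ together with $(\fh+\k)\cdot[C]=0$ and $[C]^2=0$ would give $F\cdot[C]=0$ and $F^2=(\fh+\k)^2>0$, again contradicting the Hodge index theorem. Therefore $(\fh+\tau(\fh)\,\k)^2=0$, so \LEM{B45} applies to $(S,\fh)$, and by the first paragraph $f$ is characterized by base case~B5.

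The step I expect to be the real obstacle is the transport of data in the second paragraph: the numbers $\mc{c}^2$ and $c\cdot\mc{c}$ are defined through the parametric map $\Psi_c$ on $\dom f$, and one has to verify that they agree with $M^2$ and $(\fh+\k)\cdot M$ on the reduction pair, that is, that moving between $\bmd f$, the reduction pair $S$, and the base model $\bmd{\Psi_c}$ does not perturb these two intersection numbers. This is exactly the bookkeeping that \LEM{adjoint} and the defining identity $h^0(\fh+\k)=h^0([f]+\kf)$ of a reduction pair are designed to control, and it is the point at which I would be most careful.
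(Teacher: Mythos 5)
Your route is genuinely different from the paper's, and it has a gap at exactly the point you flag. The paper's proof does not introduce a reduction pair at all: it applies \LEM{fiber}b directly to $M:=\mc{c}$ (on the model where $\Psi_c$ is resolved), obtaining $\mc{c}=\gamma[C]$, then argues $[f]\cdot[C]>0$ by Hodge index, $\kf\cdot[C]<0$ from $c\cdot[C]=0$, $\kf\cdot[C]=-2$ by the genus formula, hence $[f]\cdot[C]=2$, and concludes that $C$ is carried by $\varphi_{[f]}$ to a line or a conic. No mention of $\tau$, no reduction pair, no appeal to \LEM{B45}. Your plan, by contrast, builds a reduction pair $(S,\fh)$, transports the hypotheses there, and re-derives enough to feed \LEM{B45}, which itself goes through \LEM{pencil} and re-establishes much of what \LEM{fiber} gives directly. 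That detour is not just longer; it is where the proof breaks.

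The gap is the transport step. You need $M^2=\mc{c}^2=0$ and $(\fh+\k)\cdot M=c\cdot\mc{c}=0$ where $M$ is the moving part of $|\fh+\k|$ on the reduction pair $S$. But passing from $\bmd f$ to $S$ contracts $(-1)$-curves $E$ with $[f]\cdot[E]=0$; for such an $E$ one has $c\cdot[E]=\kf\cdot[E]=-1<0$, so $E$ lies in the fixed part of $|c|$. If the contracted point is a base point of multiplicity $m>0$ of the moving part of the pushed-forward system, then the moving-part classes on the two models differ by $m[E]$ and their self-intersections differ by $m^2$: the moving part on $S$ can have strictly positive self-intersection even when $\mc{c}^2=0$. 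The same phenomenon occurs between $\bmd f$ and $\bmd\Psi_c$, since $\mc{c}$ is by definition computed after resolving \emph{all} base points of $\Psi_c$. So the equalities $M^2=\mc{c}^2$ and $(\fh+\k)\cdot M=c\cdot\mc{c}$ are not automatic, and \LEM{adjoint}a does not rescue you: it controls $h^0(\fh)$, $\fh^2$, $\fh\cdot\k$, $\gcd\fh$ and (via \LEM{RR}) $h^0(\fh+\k)$, but says nothing about the class of the moving part of $|\fh+\k|$ or its intersection numbers. With $M^2>0$ you cannot invoke \LEM{fiber}b, and the rest of your paragraph~3 collapses. The fix is simply not to introduce the reduction pair: work where $\mc{c}$ actually lives, apply \LEM{fiber}b there, and argue as the paper does.
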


\begin{proof}
We know from \LEM{fiber} that $\mc{c}=\gamma\,[C]$
for some irreducible curve $C\subset S$ and $\gamma\in\Z_{>0}$.
We have $[f]\cdot [C]>0$, otherwise we arrive at a contradiction
with the Hodge index theorem.
Since $([f]+\kf)\cdot [C]=0$ it follows that $\kf\cdot [C]<[C]^2=0$.
Hence $\kf\cdot [C]=-2$ by the genus formula so that  $C$ is a rational curve.
The image of $f$ is a surface, since $[f]^2>0$ and $[f]\cdot [C]=2$.
Therefore $C$ is mapped by $\varphi_{[f]}$ to either a line or a conic.
We conclude that $f$ is characterized by base case~B5.
\end{proof}

\begin{lemma}
\label{lem:zero}
If $f\in \cM$ \st $[f]^2=\bc_0(f)=\bc_1(f)=\bc_2(f)=0$, then
there exists no $g\in\cM_{\bc_1}$ \st
$[g]^2>0$ and
$f\in\{\br_1(g),~ (\br_2\circ\br_1)(g)\}$.
\end{lemma}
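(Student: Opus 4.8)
Suppose, for contradiction, that $g\in\cM_{\bc_1}$ satisfies $[g]^2>0$ and $f\in\{\br_1(g),(\br_2\circ\br_1)(g)\}$. Since $[g]^2>0$, \LEM{pair} provides a reduction pair $(S,\fh)$ for $g$; write $\k$ for the canonical class of the (rational) surface $S$, so $\tau(\fh)>0$, $\fh^2>0$ and $h^0(\fh+\k)=h^0([g]+\kg)>1$, the last inequality because $\bc_1(g)=1$. The role of the two allowed forms of $f$ is only this: in either case $\img\br_1(g)$ is a curve. Indeed $[f]^2=0$, so by the dichotomy recalled in \EXM{cdeg3} the image $\img f$ has dimension $<2$; passing from $\br_1(g)$ to $(\br_2\circ\br_1)(g)$ replaces the parametrization by a Veronese re-embedding and hence does not change the dimension of the image, so $\dim\img\br_1(g)<2$; and $\img\br_1(g)$ is not a point because $h^0([\br_1(g)])=h^0([g]+\kg)>1$. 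Tracking moving parts along the birational morphism $\nu\c\bmd g\to S$ underlying \LEM{pair} — the $(-1)$-curves it contracts are orthogonal to $[g]$, hence meet $[g]+\kg$ negatively and so lie in the fixed part of $|[g]+\kg|$ — one gets $\img\varphi_{\fh+\k}=\img\Psi_{[g]+\kg}=\img\br_1(g)$, a curve.

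Next I would show that $\fh+\k$ is a positive multiple of the class of a rational fibre. First $\fh+\k$ is nef: if some irreducible $E$ had $(\fh+\k)\cdot E<0$, then (arguing as in the proof of \LEM{adjoint}(b)) $E$ would be a $(-1)$-curve, whence $\fh\cdot E<1$ and so $\fh\cdot E\le 0$; but $\tau(\fh)>0$ forces $\fh\cdot E\ge 1$ for every $(-1)$-curve, a contradiction. Being nef with one‑dimensional image, $\fh+\k$ has $(\fh+\k)^2=0$ (a nef and big class has birational associated map onto a surface, cf.\ the use of \citep[Theorem~2.2.16]{laz1} in \LEM{fiber}). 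Writing $\fh+\k=M+F$ for the moving and fixed parts, nefness together with $(\fh+\k)^2=0$ gives $M^2=0$, so \LEM{fiber}(c) yields $F=0$; then $|\fh+\k|$ is base‑point free (otherwise a resolution would make a nef class have negative self‑intersection), and after Stein factorization $\fh+\k=k\,\phi$ with $k\in\Z_{>0}$ and $\phi$ the class of a fibre $\Phi$ of a fibration $S\to\P^1$, $\phi^2=0$. From $\fh=k\phi-\k$ we get $\fh\cdot\phi=-\k\cdot\phi=2-2\,p_a(\Phi)$; as $\fh$ is nef and $\Phi$ effective this is $\ge 0$, so $p_a(\Phi)\le 1$, and $p_a(\Phi)=1$ would give $\fh\cdot\phi=0$ with $\fh^2>0$ and $\phi^2=0$, impossible by the Hodge index theorem. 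Hence $\Phi$ is rational, $\k\cdot\phi=-2$ and $\fh\cdot\phi=2$.

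Finally I would feed this into \LEM{B45}. Since $(\fh+t\,\k)\cdot\phi=2-2t$, nefness of $\fh+\tau(\fh)\,\k$ forces $\tau(\fh)\le 1$. By \LEM{nef}, $(\fh+\tau(\fh)\,\k)^2\ge 0$; if it were $>0$, then \LEM{tinZ} would give $\tau(\fh)\in\Z_{>0}$, hence $\tau(\fh)=1$, hence $\fh+\tau(\fh)\,\k=k\phi$ has square $0$ — a contradiction; so $(\fh+\tau(\fh)\,\k)^2=0$. Moreover $\fh+\tau(\fh)\,\k\ne 0$: otherwise $\fh=-\tau(\fh)\,\k$, so $k\phi=\fh+\k=(1-\tau(\fh))\,\k$ with $\tau(\fh)\ne 1$ (as $k\phi\ne 0$), giving $\k^2=0$ and then $\fh^2=\tau(\fh)^2\,\k^2=0$, against $\fh^2>0$. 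Thus $(S,\fh)$ is a reduction pair for $g$ with $(\fh+\tau(\fh)\,\k)^2=0$, $\tau(\fh)\le 1$ and $\fh+\tau(\fh)\,\k\ne 0$, so \LEM{B45} shows that $g$ is characterized by base case B4 or B5; but that requires $\bc_1(g)=0$, contradicting $g\in\cM_{\bc_1}$. This contradiction proves the lemma.

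The step I expect to be the main obstacle is the bookkeeping in the first paragraph: making precise that $\br_2$ does not change the dimension of the image, and that the adjoint image on the reduction‑pair surface $S$ agrees with $\img\br_1(g)$ — i.e.\ that contracting the $(-1)$-curves produced by \LEM{adjoint}(a) does not alter the moving part of $|[g]+\kg|$. Once $\img\br_1(g)$ is identified as a curve, the remainder is a routine run through the nef‑threshold lemmas \LEM{nef}, \LEM{tinZ}, \LEM{fiber} and \LEM{B45}.
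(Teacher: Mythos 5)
Your route is genuinely different from the paper's. You pass to a reduction pair $(S,\fh)$ for $g$, show that $\fh+\k$ is nef with square zero and trivial fixed part, identify it as a multiple of a rational fibre class, and then try to land in \LEM{B45}. The paper instead stays on $\bmd g$: writing $c:=[g]+\kg$, it notes that $\bc_1(g)=1$ together with $[g]^2>0$ and $\mc{c}^2=[f]^2=0$ forces $c\cdot\mc{c}\neq 0$ by the very definition of $\bc_1$, and then uses \LEM{adjoint}b and \LEM{fiber}c to show that every component of the fixed part of $|c|$ is contracted into a fibre of $\varphi_{\mc{c}}$, whence $c\cdot\mc{c}=\mc{c}^2=0$ --- the desired contradiction. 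This avoids both the moving-part bookkeeping you flag and the detour through \LEM{B45}.

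The genuine gap is your final step. You conclude from \LEM{B45} that $g$ is ``characterized by base case B4 or B5'' and then read off $\bc_1(g)=0$ from the definition of ``characterized by a base case''. But the proof of \LEM{B45} only establishes the geometric conditions listed under B4/B5 (the image is covered by lines or conics); it nowhere verifies the conjunct $\bc_0=\bc_1=\bc_2=0$, which is a standing hypothesis wherever that lemma is invoked in the paper (namely in the proof of \THM{B}a). Extracting $\bc_1(g)=0$ from its conclusion is therefore circular. The repair needs no new idea: once you have $\fh+\k=\gamma\,[C]$ with $[C]^2=0$ and trivial fixed part on $S$, the transfer back to $\bmd g$ that you already sketch gives $([g]+\kg)\cdot\mc{[g]+\kg}=0$ in addition to $\mc{[g]+\kg}^2=0$; together with $[g]^2>0$ and $h^0([g]+\kg)>1$ this makes the clause $[g]^2>\mc{c}^2=c\cdot\mc{c}=0$ true, so $\bc_1(g)=0$ directly from the definition, contradicting $g\in\cM_{\bc_1}$. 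Two smaller points deserve actual proofs rather than assertions: the compatibility of moving parts under the contraction $\bmd g\to S$, and your claim that $\br_2$ preserves the dimension of the image --- you need $\dim\img\br_2(h)<2\Rightarrow\dim\img h<2$, whereas a priori one only has $\dim\img\Psi_b\leq\dim\img\Psi_{[h]}$ for $b=\tfrac{1}{\gcd[h]}[h]$, which is the wrong direction (the paper is equally terse on this second point).
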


\begin{proof}
Suppose by contradiction that there exists $g\in\cM_{\bc_1}$ \st
$[g]^2>0$ and $f=\br_1(g)$.
Let $c:=[g]+\kg$ and notice that $\mc{c}=[\br_1(g)]=[f]$
is the class of the moving part of the linear series $|c|$
associated to~$c$.
Since $\bc_1(g)=1$, $[g]^2>0$ and $\mc{c}^2=0$,
we deduce from the definition of~$\bc_1$ that $h^0(c)>1$ and $\mc{c}\cdot c\neq 0$.
Thus $c-\mc{c}\neq 0$ and there exists an irreducible curve $A\subset\bmd g$
\st $[A]$ is the class of a prime divisor of the fixed part with class $c-\mc{c}$.
By \LEM{adjoint}b there exists a birational morphism
$\mu\c\bmd g\to\img\mu$ \st
$\tau(\mu_*c)\geq 0$
and
$\img\varphi_{\mu_*c}=\img\varphi_c$.
It follows from \LEM{fiber}c that $(\mu_*c)^2=0$ and $\mu_*[A]=0$.
Hence $A$ is contained the fiber of~$\varphi_{\mc{c}}$.
This implies that $\mc{c}\cdot A=0$ and thus we deduce that $\mc{c}\cdot(c-\mc{c})=0$.
We arrived at a contradiction as $\mc{c}\cdot(c-\mc{c})=\mc{c}\cdot c=0$.

Finally, suppose by contradiction that there exists $g\in\cM_{\bc_1}$ \st
$[g]^2>0$ and $f=(\br_2\circ\br_1)(g)$.
In this case
$[h]^2=\bc_0(h)=\bc_1(h)=0$, where $h:=\br_1(g)$.
We now arrive at a contradiction by applying the above arguments
with $h$ instead of $f$.
\end{proof}

\begin{proof}[Proof of \THM{B}.]
a)
If $h^0([f]+\kf)>1$, then
it follows from \LEM{B5direct} that $f$ is characterized by base case~B5.
Now suppose that $h^0([f]+\kf)\leq 1$.
By \LEM{pair} there exists a reduction pair~$(S,\fh)$ for $f$.
It follows from \LEM{c1a} that $(\fh+\tau(\fh)\,\k)^2=0$ and $\tau(\fh)\leq 1$.
If $\fh+\tau(\fh)\,\k\neq 0$, then
it follows from \LEM{B45} that $f$ is characterized by base case~B4 or B5.
If $\fh+\tau(\fh)\,\k=0$, then we know from \LEM{B123}
that $f$ is characterized by base case~B1, B2 or B3.

b) This assertion follows from \LEM{zero}.
\end{proof}

\section{Acknowledgements}

We would like to thank J.G. Alc{\'a}zar for
bringing interesting ideas to our attention.
Our software implementation \cite{github} uses \citep[Sage]{sage}.
This research was supported by the Austrian Science Fund (FWF): project P33003.

\section{References}
\bibliography{surface-iso}

\paragraph{}
B. J\"uttler,
Institute of Applied Geometry,
Johannes Kepler University
\\
\textbf{email:} \url{bert.juettler@jku.at}

\paragraph{}
N. Lubbes,
Johann Radon Institute for Computational and Applied
Mathematics (RICAM), Austrian Academy of Sciences
\\
\textbf{email:} \url{niels.lubbes@gmail.com}

\paragraph{}
J. Schicho,
Research Institute for Symbolic Computation (RISC),
Johannes Kepler University
\\
\textbf{email:} \url{josef.schicho@risc.jku.at}
\end{document}